\def\sD{{\mathfrak D}}      
   \def\sH{{\mathfrak H}}   
   \def\sK{{\mathfrak K}}   \def\sL{{\mathfrak L}}
\def\sM{{\mathfrak M}}   \def\sN{{\mathfrak N}}
      \def\dC{{\mathbb C}}
\def\dD{{\mathbb D}}      
\def\dJ{{\mathbb J}}      
   \def\dN{{\mathbb N}}   
      \def\dR{{\mathbb R}}
   \def\dT{{\mathbb T}}   
   \def\dW{{\mathbb W}}   \def\dX{{\mathbb X}}
   \def\cB{{\mathcal B}}   
   \def\cH{{\mathcal H}}   
   \def\cK{{\mathcal K}}   \def\cL{{\mathcal L}}
      \def\cR{{\mathcal R}}
\def\cS{{\mathcal S}}   \def\cT{{\mathcal T}}   \def\cU{{\mathcal U}}
   \def\bB{{\mathbf B}}
      \def\bL{{\mathbf L}}
   \def\bN{{\mathbf N}}   
   \def\bT{{\mathbf T}}
\def\cRS{\mathcal{RS}}
\def\ran{{\rm ran\,}}
\def\cran{{\rm \overline{ran}\,}}
\def\dom{{\rm dom\,}}
\def\cdom{{\rm \overline{dom}\,}}
\def\clos{{\rm clos\,}}
\def\cspan{{\rm \overline{span}\, }}
\def\cmr{{\dC \backslash \dR}}
\def\uphar{{\upharpoonright\,}}
\def\f{\varphi}
\newtheorem{theorem}{Theorem}[section]
\newtheorem{lemma}[theorem]{Lemma}
\newtheorem{proposition}[theorem]{Proposition}
\newtheorem{corollary}[theorem]{Corollary}
\newtheorem{definition}[theorem]{Definition}
\newtheorem{remark}[theorem]{Remark}
\numberwithin{equation}{section}
\def\RE{{\rm Re\,}}
\def\IM{{\rm Im\,}}
\def\wt{\widetilde}
\def\wh{\widehat}
\def\f{\varphi}
\def\uphar{{\upharpoonright\,}}
\numberwithin{equation}{section}
\begin{document}
\title[Holomorphic operator valued functions]
{Holomorphic operator valued functions generated by passive
selfadjoint systems}
\author[Yury Arlinski\u{\i}]{Yu.M. Arlinski\u{\i}}
\address{
Department of Mathematics \\
Dragomanov National Pedagogical University \\
Kiev \\ Pirogova 9 \\ 01601 \\ Ukraine}
\email{yury.arlinskii@gmail.com}

\author[Seppo Hassi]{S. Hassi}
\address{Department of Mathematics and Statistics \\
University of Vaasa \\
P.O. Box 700 \\
65101 Vaasa \\
Finland} \email{sha@uwasa.fi} \dedicatory{Dedicated to Professor
Joseph Ball on the occasion of his 70-th birthday}
\subjclass[2010]{Primary 47A48, 93B28, 93C25; Secondary 47A56,
93B20} \keywords{Passive system, transfer function, Nevanlinna
function, Schur function, fixed point}
\thanks{This research was partially supported by a grant from the Vilho,
Yrj\"o and Kalle V\"ais\"al\"a Foundation of the Finnish Academy of
Science and Letters. Yu.M.~Arlinski\u{\i} also gratefully acknowledges
financial support from the University of Vaasa.}

\vskip 1truecm
\thispagestyle{empty}
\baselineskip=12pt

\date{\today}

\begin{abstract}
 Let $\mathfrak M$ be a Hilbert space. In this paper we study a class
$\mathcal R\mathcal S(\mathfrak M)$ of operator functions that are holomorphic in the domain
$\mathbb C\setminus\{(-\infty,-1]\cup [1,+\infty)\}$ and whose values are
bounded linear operators in $\sM$. The functions in $\mathcal R\mathcal S(\mathfrak M)$ are
Schur functions in the open unit disk $\mathbb D$ and, in addition,
Nevanlinna functions in $\mathbb C_+\cup\mathbb C_-$. Such functions can be
realized as transfer functions of minimal passive selfadjoint
discrete-time systems. We give various characterizations for the
class $\mathcal R\mathcal S(\mathfrak M)$ and obtain an explicit form for the inner
functions from the class $\mathcal R\mathcal S(\mathfrak M)$ as well as an inner dilation for
any function from $\mathcal R\mathcal S(\mathfrak M)$. We also consider various
transformations of the class $\mathcal R\mathcal S(\mathfrak M)$, construct realizations of
their images, and find corresponding fixed points.
\end{abstract}
\maketitle


\section{Introduction}

Throughout this paper we consider separable Hilbert spaces over the
field $\dC$ of complex numbers and certain classes of operator
valued functions which are holomorphic on the open upper/lower
half-planes $\dC_{+}/\dC_-$ and/or on the open unit disk $\dD$. A
$\bB(\sM)$-valued function $M$ is called a \textit{Nevanlinna
function} if it is holomorphic outside the real axis, symmetric
$M(\lambda)^*=M(\bar\lambda)$, and satisfies the inequality $\IM
\lambda\, \IM M(\lambda)\ge 0$ for all $\lambda\in\cmr$. This last
condition is equivalent to the nonnegativity of the kernel
\[
\cfrac{M(\lambda)-M(\mu)^*}{\lambda-\bar \mu}, \quad \lambda,\mu \in
\dC_+\cup \dC_-.
\]
On the other hand, a $\bB(\sM)$-valued function $\Theta(z)$ belongs
to the \textit{Schur class} if it is holomorphic on the unit disk
$\dD$ and contractive, $||\Theta(z)||\le 1$ $\forall z\in\dD$ or,
equivalently, the kernel
\[
 \frac{I-\Theta^*(w)\Theta(z)}{1-z\bar w},\quad z,w\in\dD
\]
is nonnegative. Functions from the Schur class appear naturally in
the study of linear discrete-time systems; we briefly recall some
basic terminology here; cf. D.Z.~Arov \cite{A,Arov}. Let $T$ be a
bounded operator given in the block form
\begin{equation}\label{abcd0}
T=\begin{bmatrix} D&C\cr B&A\end{bmatrix}:
\begin{array}{l}\sM\\\oplus\\\cK\end{array}\to
\begin{array}{l}\sN\\\oplus\\\cK\end{array}
\end{equation}
with separable Hilbert spaces $\sM,\sN$, and $\sK$. The system of
equations
\begin{equation}
\label{passive} \left\{
\begin{array}{l}
 h_{k+1}=Ah_k+B\xi_k,\\
 \sigma_k=Ch_k+D\xi_k,
\end{array}
\right. \qquad k\ge 0,
\end{equation}
describes the evolution of a \textit{linear discrete time-invariant
system} $\tau=\left\{T,\sM,\sN,\sK\right\}$. Here $\sM$ and $\sN$
are called the input and the output spaces, respectively, and $\sK$
is the state space. The operators $A$, $B$, $C$, and $D$ are called
the main operator, the control operator, the observation operator,
and the feedthrough operator of $\tau$, respectively. The subspaces
\begin{equation}
\label{CO} \sK^c =\cspan\{\,A^{n}B\sM:\,n \in \dN_0 \}
\quad\mbox{and}\quad \sK^o =\cspan\{\,A^{*n}C^*\sN:\,n \in \dN_0 \}
\end{equation}
are called the controllable and observable subspaces of
$\tau=\left\{T,\sM,\sN,\sK\right\}$, respectively. If $\sK^c=\sK$
($\sK^o=\sK$) then the system $\tau$ is said to be
\textit{controllable} (\textit{observable}), and \textit{minimal} if
$\tau$ is both controllable and observable. If $\sK=\clos
\{\sK^c+\sK^o\}$ then the system $\tau$ is said to be a
\textit{simple}. Closely related to these definitions is the notion
of $\sM$-simplicity: given a nontrivial subspace $\sM\subset\sH$
the operator $T$ acting in $\sH$ is said to be \textit{$\sM$-simple} if
\[
\overline{{\rm span\,}}\left\{\,T^n \sM,\,
 n\in\dN_0\right\}=\sH.
\]
Two discrete-time systems
$\tau_1=\left\{T_1,\sM,\sN,\sK_{1}\right\}$ and
$\tau_2=\left\{T_2,\sM,\sN,\sK_{2}\right\}$ are \textit{unitarily
similar} if there exists a unitary operator $U$ from $\sK_{1}$ onto
$\sK_{2}$ such that
\begin{equation}\label{unisim}
 A_2=UA_1U^*,\quad  B_2=UB_1, \quad  C_2=C_1U^*, \mbox{ and } D_2=D_1.
\end{equation}
If the linear operator $T$ is contractive (isometric, co-isometric,
unitary), then the corresponding discrete-time system is said to be
\textit{passive} (\textit{isometric, co-isometric, conservative}).
With the passive system $\tau$ in \eqref{passive} one associates the
\textit{transfer function} via
\begin{equation}
\label{TrFu} \Omega_\tau(z):=D+ z C(I - z A)^{-1}B, \quad z\in \dD.
\end{equation}
It is well known that the transfer function of a passive system
belongs to the \textit{Schur class} ${\bf S}(\sM,\sN)$ and,
conversely, that every operator valued function $\Theta(\lambda)$
from the Schur class ${\bf S}(\sM,\sN)$ can be realized as the
transfer function of a passive system, which can be chosen as
observable co-isometric (controllable isometric, simple
conservative, passive minimal). Notice that an application of the
Schur-Frobenius formula (see Appendix A) for the inverse of a block
operator gives with $\sM=\sN$ the relation
\begin{equation}\label{sch-fr}
P_\sM(I-zT)^{-1}\uphar\sM=(I_\sM-z\Omega_\tau(z))^{-1},\quad
z\in\dD.
\end{equation}

It is known that two isometric and controllable (co-isometric and
observable, simple conservative) systems with the same transfer
function are unitarily similar. However, D.Z.~Arov~\cite{A} has
shown that two minimal passive systems $\tau_1$ and $\tau_2$ with
the same transfer function $\Theta(\lambda)$ are only weakly
similar; weak similarity neither preserves the dynamical properties
of the system nor the spectral properties of its main operator $A$.
Some necessary and sufficient conditions for minimal passive systems
with the same transfer function to be (unitarily) similar have been
established in \cite{ArNu1,ArNu2}.

By introducing some further restrictions on the passive system
$\tau$ it is possible to preserve unitary similarity of passive
systems having the same transfer function. In particular, when the
main operator $A$ is normal such results have been obtained in
\cite{AHS3}; see in particular Theorem~3.1 and Corollaries 3.6--3.8
therein. A stronger condition on $\tau$ where main operator is
selfadjoint naturally yields to a class of systems which preserve
such a unitary similarity property. A class of such systems
appearing in \cite{AHS3} is the class of \textit{passive
quasi-selfadjoint systems}, in short \textit{$pqs$-systems}, which
is defined as follows: a collection
\[
\tau=\left\{ T,\sM,\sM,\cK\right\}
\]
is a $pqs$-system if the operator $T$ determined by the block
formula \eqref{abcd0} with the input-output space $\sM=\sN$ is a
contraction and, in addition,
$$\ran(T-T^*)\subseteq\sM.$$ Then, in particular, $F=F^*$ and
$B=C^*$ so that $T$ takes the form
\[
 T=\begin{bmatrix} D&C\cr
 C^*&F\end{bmatrix}:\begin{array}{l}\sM\\\oplus\\\cK\end{array}\to
\begin{array}{l}\sM\\\oplus\\\cK\end{array},
\]
i.e., $T$ is a quasi-selfadjoint contraction in the Hilbert space
$\sH=\sM\oplus\cK$. The class of $pqs$-systems gives rise to
transfer functions which belong to the subclass $\cS^{qs}(\sM)$ of
Schur functions. The class $\cS^{qs}(\sM)$ admits the following
intrinsic description; see \cite[Definition 4.4, Proposition
5.3]{AHS3}: a $\bB(\sM)$-valued function $\Omega$ belongs to
$\cS^{qs}(\sM)$ if it is holomorphic on
$\dC\setminus\{(-\infty,-1]\cup[1,+\infty)\}$ and has the following
additional properties:
\begin{itemize}
\item[(S1)]
$W(z)=\Omega(z)-\Omega(0)$ is a Nevanlinna function;
\item[(S2)]
the strong limit values $W(\pm 1)$ exist and $W(1)-W(-1)\le 2I$;
\item[(S3)]
$\Omega(0)$ belongs to the operator ball
$$\cB\left(-\frac{W(1)+W(-1)}{2},\; I-\frac{W(1)-W(-1)}{2}
\right)$$ with the center $-\cfrac{W(1)+W(-1)}{2}$ and with the left
and right radii $I-\cfrac{W(1)-W(-1)}{2}.$
\end{itemize}
It was proved in \cite[Theorem 5.1]{AHS3} that the class
$\cS^{qs}(\sM)$ coincides with the class of all transfer
functions of
$pqs$-systems with input-output space $\sM$. In
particular, every function from the class $\cS^{qs}(\sM)$ can be
realized as the transfer function of a \textit{minimal} $pqs$-system
and, moreover, two minimal realization are unitarily equivalent; see
\cite{AHS2,AHS3,ArlKlotz2010}. For $pqs$-systems the controllable
and observable subspaces $\cK^{c}$ and $\cK^{o}$ as defined in
\eqref{CO} necessarily coincide. Furthermore, the following
equivalences were established in \cite{ArlKlotz2010}:
\[
\begin{split}
  T \;\mbox{is}\;\; \sM\mbox{-simple }
 \Longleftrightarrow & \quad  \mbox{the operator}\; F \;\mbox{is}\;\cran
C^*-\mbox{simple in}\; \cK \\
 \Longleftrightarrow & \quad  \mbox{the system}\quad \tau=\left\{
\begin{bmatrix} D&C\cr
C^*&F\end{bmatrix},\sM,\sM,\cK\right\}\;\mbox{is minimal}.
\end{split}
\]

We can now introduce one of the main objects to be studied in the
present paper.

\begin{definition}\label{rsm}
Let $\sM$ be a Hilbert space. A $\bB(\sM)$-valued Nevanlinna
function $\Omega$ which is holomorphic on
$\dC\setminus\{(-\infty,-1]\cup[1,+\infty)\}$ is said to belong to
the class $\cRS(\sM)$ if
\[
 -I\le \Omega(x)\le I, \quad x\in (-1,1).
\]
The class $\cRS(\sM)$ will be called the combined Nevanlinna-Schur class of $\bB(\sM)$-valued operator functions.
\end{definition}

If $\Omega\in\cRS(\sM)$, then $\Omega(x)$ is non-decreasing on the
interval $(-1,1)$. Therefore, the strong limit values $\Omega(\pm
1)$ exist and satisfy the following inequalities
\begin{equation}\label{limits}
 -I_\sM\le \Omega(-1)\le \Omega(0)\le\Omega(1)\le I_\sM.
\end{equation}
It follows from (S1)--(S3) that the class $\cRS(\sM)$ is a subclass
of the class $\cS^{qs}(\sM)$.\\

In this paper we give some new characterizations of the class
$\cRS(\sM)$, find an explicit form for inner functions from the
class $\cR(\sM)$, and construct a bi-inner dilation for an arbitrary
function from $\cRS(\sM)$. For instance, in Theorem \ref{newchar} it
is proven that a $\bB(\sM)$-valued Nevanlinna function defined on
$\dC\setminus\{(-\infty,-1]\cup[1,+\infty)\}$ belongs to the class
$\cRS(\sM)$ if and only if
\[
 K(z,w):=I_\sM-\Omega^*(w)\Omega(z)-\cfrac{1-\bar w z}{z-\bar
 w}\,\left(\Omega(z)-\Omega^*(w)\right)
\]
defines a nonnegative kernel on the domains
\[
 \dC\setminus\{(-\infty,-1]\cup [1,+\infty)\},\quad \IM
z>0\quad\mbox{and}\quad  \dC\setminus\{(-\infty,-1]\cup
[1,+\infty)\},\quad \IM z<0.
\]

We also show that the transformation
\begin{equation}
\label{TREIN}
\cR\cS(\sM)\ni\Omega\mapsto{\bf\Phi}(\Omega)=\Omega_{\bf\Phi},\quad
 \Omega_{\bf\Phi}(z):=(zI-\Omega(z))(I-z\Omega(z))^{-1},
\end{equation}
with $z\in\dC\setminus\{(-\infty,-1]\cup[1,+\infty)\}$ is an
automorphism of $\cRS(\sM)$, ${\bf\Phi}^{-1}={\bf\Phi}$, and that
${\bf\Phi}$ has a unique fixed point, which will be specified in
Proposition \ref{fixpoint}.

It turns out that the set of inner functions from the class
$\cRS(\sM)$ can be seen as the image ${\bf \Phi}$ of constant
functions from $\cRS(\sM)$: in other words, the inner functions from
$\cRS(\sM)$ are of the form
\[
\Omega_{\rm {in}}(z)=(zI+A)(I+zA)^{-1},\;A\in [-I_\sM, I_\sM].
\]
In Theorem \ref{inerdil} it is proven that every function
$\Omega\in\cRS(\sM)$ admits the representation
\begin{equation}\label{biinerdil1}
 \Omega(z)=P_\sM\wt \Omega_{in}(z)\uphar\sM=P_\sM(zI+\wt A)(I+z\wt A)^{-1}\uphar\sM,\quad
 \wt A\in [-I_{\wt\sM}, I_{\wt\sM}],
\end{equation}
where $z\in\dC\setminus\{(-\infty,-1]\cup[1,+\infty)\}$ and $\wt\sM$
is a Hilbert space containing $\sM$ as a subspace and such that
$\cspan\{\wt A^n\sM:\;n\in\dN_0\}=\wt\sM$ (i.e., $\wt A$ is
$\sM$-simple). Equality \eqref{biinerdil1} means that an arbitrary
function of the class $\cRS(\sM)$ admits a bi-inner dilation (in the
sense of \cite{Arov}) that belongs to the class $\cRS(\wt\sM)$.

In Section \ref{sec6} we also consider the following transformations
of the class $\cRS(\sM)$:
\begin{multline}
\label{mappings2}
\Omega\left(\cfrac{z+a}{1+za}\right)=:\Omega_a
(z)\leftarrowtail\Omega(z)\rightarrowtail\wh\Omega_a(z):=(aI+\Omega(z))(I+a\Omega(z))^{-1},
\\
 a\in(-1,1),z\in\dC\setminus\{(-\infty,-1]\cup[1,+\infty)\}.
\end{multline}
These are analogs of the M\"{o}bius transformation
\[
w_a(z)=\cfrac{z+a}{1+az},\quad z\in\dC\setminus\{-a^{-1}\}
\;\left(a\in(-1,1),\;a\ne 0\right)
\]
of the complex plane. The mapping $w_a$ is an automorphism of
$\dC\setminus\{(-\infty,-1]\cup [1,+\infty)\}$ and it maps $\dD$
onto $\dD$, $[-1,1]$ onto $[-1,1]$, $\dT$ onto $\dT$, as well as
$\dC_+/\dC_-$ onto $\dC_+/\dC_-$.

The mapping
\[
\cRS(\sM)\ni\Omega
\mapsto \Omega_a(z)=\Omega\left(\cfrac{z+a}{1+za}\right)\in\cRS(\sM)
\]
can be rewritten as
\[
\Omega\mapsto\Omega\circ w_a.
\]
In Proposition \ref{nnnee} it is shown that the fixed points of this
transformation consist only of the constant functions from
$\cRS(\sM)$: $\Omega(z)\equiv A$ with $A\in [-I_\sM, I_\sM]$.

One of the operator analogs of $w_a$ is the following transformation
of $\bB(\sM)$:
\[
W_a(T)=(T+aI)(I+aT)^{-1},\quad a\in(-1,1).
\]
The inverse of $W_a$ is given by
\[
W_{-a}(T)=(T-aI)(I-aT)^{-1}.
\]
The class $\cRS(\sM)$ is stable under the transform $W_a$:
\[
 \Omega\in\cRS(\sM)\Longrightarrow W_a\circ\Omega\in\cRS(\sM).
\]
If $T$ is selfadjoint and unitary (a fundamental symmetry), i.e.,
$T=T^*=T^{-1}$, then for every $a\in (-1,1)$ one has
\begin{equation}\label{FunSym}
W_a(T)=T
\end{equation}
Conversely, if for a selfadjoint operator $T$ the equality
\eqref{FunSym} holds for some $a:-a^{-1}\in\rho(T)$, then $T$ is a
fundamental symmetry and \eqref{FunSym} is valid for all $a\ne\{\pm
1\}$.

One can interpret the mappings in \eqref{mappings2} as $\Omega\circ
w_a$ and $W_a\circ\Omega$, where $\Omega\in\cRS(\sM)$. Theorem
\ref{infix} states that inner functions from $\cRS(\sM)$ are the
only fixed points of the transformation
\[
\cR\cS(\sM)\ni\Omega\mapsto W_{-a}\circ\Omega\circ w_a.
\]
An equivalent statement is that the equality
$$\Omega\circ w_a=W_a\circ\Omega $$
holds only for inner functions $\Omega$ from the class $\cRS(\sM)$.
On the other hand, it is shown in Theorem \ref{cjdcyjd} that the only solutions of the functional equation
\[
 \Omega(z)=\left(\Omega\left(\cfrac{z-a}{1-az}\right)-a\,I_\sM\right)\left(I_\sM-a\,\Omega\left(\cfrac{z-a}{1-az}\right)\,\right)^{-1}
\]
in the class $\cRS(\sM)$, where $a\in(-1,1)$, $a\ne 0$, are constant functions $\Omega$, which are fundamental symmetries in $\sM$.

To introduce still one further transform, let
\[
 {\bf K}=\begin{bmatrix}
 K_{11}&K_{12}\cr K^*_{12}& K_{22}\end{bmatrix}:
 \begin{array}{l}\sM\\\oplus\\ H\end{array}\to \begin{array}{l}\sM\\\oplus\\ H\end{array}
\]
be a selfadjoint contraction and consider the mapping
\[
\cR\cS(H)\ni\Omega\;\mapsto\;\Omega_{\bf K} (z):=
K_{11}+K_{12}\Omega(z)(I-K_{22}\Omega(z))^{-1}K^*_{12},
\]
where $ z\in\dC\setminus\{(-\infty,-1]\cup[1,+\infty)\}.$ In Theorem
\ref{mappi} we prove that if $||K_{22}||<1,$ then $\Omega_{\bf
K}\in\cRS(\sM)$ and in Theorem \ref{redpro} we construct a
realization of $\Omega_{\bf K}$ by means of realization of
$\Omega\in\cRS(H)$ using the so-called \textit{Redheffer product};
see \cite{Redheffer,Timotin1995}. The mapping
\[
\bB(H)\ni T\mapsto K_{11}+K_{12}T(I-K_{22}T)^{-1}K_{21}\in \bB(\sM)
\]
can be considered as one further operator analog of the M\"{o}bius
transformation, cf. \cite{Shmul1}.

Finally, it is emphasized that in Section \ref{sec6} we will
systematically construct \textit{explicit realizations} for each of
the transforms ${\bf\Phi}(\Omega)$, $\Omega_a$, and $\wh \Omega_a$
as transfer functions of minimal passive selfadjoint systems using a
minimal realization of the initially given function
$\Omega\in\cRS(H)$.

{\bf Basic notations.} We use the symbols $\dom T$, $\ran T$, $\ker
T$ for the domain, the range, and the kernel of a linear operator
$T$. The closures of $\dom T$, $\ran T$ are denoted by $\cdom T$,
$\cran T$, respectively. The identity operator in a Hilbert space
$\sH$ is denoted by  $I$ and sometimes by $I_\sH$. If $\sL$ is a
subspace, i.e., a closed linear subset of $\sH$, the orthogonal
projection in $\sH$ onto $\sL$ is denoted by $P_\sL.$ The notation
$T\uphar \sL$ means the restriction of a linear operator $T$ on the
set $\sL\subset\dom T$. The resolvent set of $T$ is denoted by
$\rho(T)$. The linear space of bounded operators acting between
Hilbert spaces $\sH$ and $\sK$ is denoted by $\bB(\sH,\sK)$ and the
Banach algebra $\bB(\sH,\sH)$ by $\bB(\sH).$ For a contraction
$T\in\bB(\sH,\sK)$ the defect operator $(I-T^*T)^{1/2}$ is denoted
by $D_T$ and $\sD_T:=\cran D_T$. For defect operators one has the
commutation relations
\begin{equation}\label{commut}
 TD_T = D_{T^*}T, \quad T^*D_{T^*}=D_{T}T^*
\end{equation}
and, moreover,
\begin{equation}\label{ranTDT}
 \ran TD_T = \ran D_{T^*}T=\ran T \cap \ran D_{T^*}.
\end{equation}
In what follows we systematically use the Schur-Frobenius formula
for the resolvent of a block-operator matrix and parameterizations
of contractive block operators, see Appendices \ref{AppendixA} and
\ref{AppendixB}.

  \section{The combined Nevanlinna-Schur class $\mathcal R\mathcal S(\mathfrak M)$}
In this section some basic properties of operator functions belonging to the combined Nevanlinna-Schur class
$\cRS(\sM)$ are derived. As noted in Introduction every function
$\Omega\in\cRS(\sM)$ admits a realization as the transfer function
of a passive selfadjoint system. In particular, the function
$\Omega\uphar\dD$ belongs to the Schur class $\cS(\sM)$.

It is known from \cite{Arl1991} that, if $\Omega\in \cRS(\sM)$ then
for every $\beta\in[0,\pi/2)$ the following implications are
satisfied:
\begin{equation}\label{propert111}
\begin{array}{l}\left\{\begin{array}{l}
|z\sin\beta+i\cos\beta|\le 1\\
\quad z\ne\pm 1
\end{array}\right.
\Longrightarrow \left\|\Omega(z)\sin\beta+i\cos\beta\,I\right\|\le 1\\
\left\{\begin{array}{l} |z\sin\beta-i\cos\beta|\le 1\\
\quad z\ne\pm 1
\end{array}\right.
\Longrightarrow\left\|\Omega(z)\sin\beta-i\cos\beta\,I\right\|\le 1
\end{array}.
\end{equation}
In fact, in Section \ref{charact} these implications will be we
derived once again by means of some new characterizations for the
class $\cRS(\sM)$.

To describe some further properties of the class $\cRS(\sM)$
consider a passive selfadjoint system given by
\begin{equation}\label{Tsystem}
 \tau=\left\{\begin{bmatrix}
 D&C\cr C^*&F\end{bmatrix};\sM,\sM,\cK\right\},
\end{equation}
with $D=D^*$ and $F=F^*$. It is known, see
Proposition~\ref{ParContr} and Remark \ref{stut1} in Appendix
\ref{AppendixB}, that the entries of the selfadjoint contraction
\begin{equation}\label{Tblock}
T=\begin{bmatrix} D&C\cr
C^*&F\end{bmatrix}:\begin{array}{l}\sM\\\oplus\\
\cK\end{array}\to
\begin{array}{l}\sM\\\oplus\\
\cK\end{array}
\end{equation}
admit the parametrization
\begin{equation}\label{faul}
C=KD_F,\quad D=-KFK^*+ D_{K^*}Y D_{K^*},
\end{equation}
where $K\in\bB(\sD_F,\sM)$ is a contraction and $Y\in\bB(\sD_{K^*})$
is a selfadjoint contraction. The minimality of the system $\tau$
means that the following equivalent equalities hold:
\begin{equation}
\label{minim21}
\cspan\{F^nD_F K^*,\;n\in\dN_0\}=\cK\Longleftrightarrow\bigcap\limits_{n\in\dN_0}\ker(KF^n D_F)=\{0\}.
\end{equation}
Notice that if $\tau$ is minimal, then necessarily $\cK=\sD_F$ or,
equivalently, $\ker D_F=\{0\}$.

Recall from \cite{SF} the Sz.-Nagy -- Foias characteristic function
of the selfadjoint contraction $F$, which for every
$z\in\dC\setminus\{(-\infty,-1]\cup[1,+\infty)\}$ is given by
\[
\begin{split}
 \Delta_F(z) & =\left(-F+zD_F(I-zF)^{-1}D_F\right)\uphar\sD_F\\
             & =\left(-F+z(I-F^2)(I-zF)^{-1}\right)\uphar\sD_F\\
             & =(zI-F)(I-zF)^{-1}\uphar\sD_F.
\end{split}
\]
Using the above parametrization one obtains the representations, cf.
\cite[Theorem 5.1]{AHS3},
\begin{equation}\label{dshazom}
\begin{split}
\Omega(z)&=D+zC(I-zF)^{-1}C^*=D_{K^*}YD_{K^*}+K\Delta_F(z)K^*\\
 &= D_{K^*}YD_{K^*}+K(zI-F)(I-zF)^{-1}K^*.
\end{split}
\end{equation}
Moreover, this gives the following representation for the limit
values $\Omega(\pm 1)$:
\begin{equation}
\label{ghtltks} \Omega(-1)=-KK^*+D_{K^*}YD_{K^*},\quad
 \Omega(1)=KK^*+D_{K^*}YD_{K^*}.
\end{equation}
The case $\Omega(\pm1)^2=I_\sM$ is of special interest and can be
characterized as follows.

\begin{proposition}\label{neuvaa}
Let $\sM$ be a Hilbert space and let $\Omega\in\cRS(\sM)$. Then the
following statements are equivalent:
\begin{enumerate}
\def\labelenumi{\rm (\roman{enumi})}
\item $\Omega(1)^2=\Omega(-1)^2=I_\sM;$
\item  the equalities
\begin{equation}
\label{INNER1}
\begin{array}{l}
\left(\cfrac{\Omega(1)-\Omega(-1)}{2}\right)^2
=\cfrac{\Omega(1)-\Omega(-1)}{2},\\
\left(\cfrac{\Omega(1)+\Omega(-1)}{2}\right)^2
=I_\sM-\cfrac{\Omega(1)-\Omega(-1)}{2}
\end{array}
\end{equation}
hold;
\item if $\tau=\left\{T;\sM,\sM,\cK\right\}$ is a passive selfadjoint
system \eqref{Tsystem} with the transfer function $\Omega$ and if
the entries of the block operator $T$ are parameterized by
\eqref{faul}, then the operator $K\in\bB(\sD_F,\sM)$ is a partial
isometry and $Y^2=I_{\ker K^*}$.
\end{enumerate}

\end{proposition}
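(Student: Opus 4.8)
The plan is to reduce the whole statement to the two selfadjoint operators
\[
E := \frac{\Omega(1)-\Omega(-1)}{2}, \qquad G := \frac{\Omega(1)+\Omega(-1)}{2},
\]
so that $\Omega(1)=G+E$ and $\Omega(-1)=G-E$. By \eqref{limits} one has $E\ge 0$, and $E,G$ are selfadjoint since $\Omega(\pm1)$ are. Moreover \eqref{ghtltks} together with the parametrization \eqref{faul} identifies these operators with the system data, namely $E=KK^*$ and $G=D_{K^*}YD_{K^*}$, where I recall $D_{K^*}=(I_\sM-KK^*)^{1/2}=(I_\sM-E)^{1/2}$. In this notation condition (ii) reads exactly $E^2=E$ and $G^2=I_\sM-E$, so the proposition becomes a statement about the pair $(E,G)$ and its realization. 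I would first record these identities and then prove (ii)$\Rightarrow$(i), (i)$\Rightarrow$(ii), and (ii)$\Leftrightarrow$(iii).

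For (ii)$\Rightarrow$(i): if $E$ is an orthogonal projection and $G^2=I_\sM-E$, then $G^2E=(I_\sM-E)E=0$, and since $G=G^*$ gives $\ker G=\ker G^2$, this forces $GE=0$ and hence $EG=(GE)^*=0$. Expanding $(G\pm E)^2=G^2\pm(GE+EG)+E^2=(I_\sM-E)+E=I_\sM$ then yields $\Omega(\pm1)^2=I_\sM$, which is (i). The converse (i)$\Rightarrow$(ii) is the main point, and where I expect the \emph{main obstacle} to lie. From $\Omega(\pm1)^2=I_\sM$, adding and subtracting the two expansions of $(G\pm E)^2$ gives $G^2+E^2=I_\sM$ together with the anticommutation $EG+GE=0$. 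The delicate step is to upgrade this anticommutation to $EG=GE=0$ using only $E\ge 0$: the relation $EG=-GE$ propagates to $p(E)G=Gp(-E)$ for every polynomial $p$, hence to $f(E)G=Gf(-E)$ for every continuous $f$ by uniform approximation of $f$ by polynomials on $[-\|E\|,\|E\|]$; choosing $f(t)=\max(t,0)$, for which $f(E)=E$ (as the spectrum of $E$ lies in $[0,\infty)$) and $f(-E)=0$, one obtains $EG=0$, and $GE=0$ by taking adjoints. Feeding $EG=0$ back into $G^2+E^2=I_\sM$ after multiplying by $E$ gives $E^3=E$; since $E\ge 0$, its spectrum is contained in $\{0,1\}$, so $E$ is a projection, $E^2=E$, and then $G^2=I_\sM-E^2=I_\sM-E$. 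This is precisely (ii).

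Finally, for (ii)$\Leftrightarrow$(iii) I would pass through the parametrization. Since $E=KK^*$, the condition $E^2=E$ says that $KK^*$ is an orthogonal projection, which is equivalent to $K$ being a partial isometry. Assuming this, $D_{K^*}=(I_\sM-E)^{1/2}=I_\sM-E$ is the orthogonal projection onto $\ker K^*=\ker E=\sD_{K^*}$, so $G=D_{K^*}YD_{K^*}$ acts as $Y$ on $\sD_{K^*}=\ker K^*$ and as $0$ on the complementary subspace $\ran E$. Hence $G^2=I_\sM-E$ is equivalent to $Y^2=I_{\ker K^*}$, which closes the equivalence with (iii). The only bookkeeping requiring care here is the identification $\sD_{K^*}=\ker K^*$ and the fact that $D_{K^*}$ reduces to the identity on $\sD_{K^*}$ once $K$ is a partial isometry; once these are in place, the remaining verifications are direct substitutions.
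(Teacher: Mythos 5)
Your proof is correct, and one leg of it takes a genuinely different route from the paper's. The paper never compares (i) and (ii) directly: it proves (i)$\Leftrightarrow$(iii) by means of the defect identity
\[
\|f\|^2-\|\Omega(\pm 1)f\|^2=\|K^*(I\mp Y)D_{K^*}f\|^2+\|D_YD_{K^*}f\|^2,\quad f\in\sM,
\]
quoted from \cite[Lemma~3.1]{AHS2007}, so that unitarity of $\Omega(\pm1)$ is read off from the vanishing of system-theoretic defect terms, and only afterwards translates (iii) into (ii) through the parametrization \eqref{faul} --- exactly the translation you perform in your last paragraph, so that part of your argument coincides with the paper's. What you do differently is the equivalence (i)$\Leftrightarrow$(ii), which you prove abstractly, with no realization at all: writing $\Omega(\pm1)=G\pm E$, the hypotheses $\Omega(\pm1)^2=I_\sM$ yield $G^2+E^2=I_\sM$ together with the anticommutation relation $EG+GE=0$, which you upgrade to $EG=GE=0$ by the functional-calculus trick ($f(E)G=Gf(-E)$ with $f(t)=\max(t,0)$, using $E\ge 0$ from \eqref{limits}), whence $E^3=E$ and positivity force $E$ to be a projection. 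This is a correct and self-contained argument: the key fact that a nonnegative operator anticommuting with a selfadjoint one must annihilate it is exactly what replaces the paper's defect formula. What your route buys is that (i)$\Leftrightarrow$(ii) becomes a general statement about pairs of selfadjoint operators with nonnegative difference, independent of systems theory and of the external reference; the realization data \eqref{ghtltks} enter only where they are unavoidable, namely in (iii). What the paper's route buys is uniformity of technique: the same defect-norm computation is the engine behind later results (e.g. Theorem \ref{ljcndy}), so obtaining (i)$\Leftrightarrow$(iii) from it comes essentially for free there.
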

\begin{proof}
From \eqref{ghtltks} we get for all $f\in\sM$
\[
 ||f||^2-||\Omega(\pm 1)f||^2=||f||^2-||(D_{K^*}YD_{K^*}\pm KK^*)f||^2=||(K^*(I\mp
 Y)D_{K^*}f||^2+||D_YD_{K^*}f||^2;
\]
cf. \cite[Lemma~3.1]{AHS2007}. Hence
\begin{multline*}
\Omega(1)^2=\Omega(-1)^2=I_\sM\Longleftrightarrow
\left\{\begin{array}{l}K^*(I- Y)D_{K^*}=0\\
K^*(I+ Y)D_{K^*}=0\\
D_YD_{K^*}=0\end{array}\right.\Longleftrightarrow
\left\{\begin{array}{l}
K^*D_{K^*}=D_K K^*=0\\
K^*YD_{K^*}=0\\
D_YD_{K^*}=0\end{array}\right.\\
\Longleftrightarrow
\left\{\begin{array}{l}K \quad\mbox{is a partial isometry}\\
Y^2=I_{\sD_{K^*}}=I_{\ker K^*}
\end{array}\right..
\end{multline*}
Thus (i)$\Longleftrightarrow$(iii).

Since $K$ is a partial isometry, i.e., $KK^*$ is an orthogonal
projection, the formulas \eqref{ghtltks} imply that
$$K\quad\mbox{is a partial isometry}\Longleftrightarrow \left(\cfrac{\Omega(1)-\Omega(-1)}{2}\right)^2
=\cfrac{\Omega(1)-\Omega(-1)}{2},$$ and in this case $D_{K^*}Y=Y$,
which implies that
$$Y^2=I_{\sD_{K^*}}=I_{\ker K^*}\Longleftrightarrow\left(\cfrac{\Omega(1)+\Omega(-1)}{2}\right)^2
=I_\sM-\cfrac{\Omega(1)-\Omega(-1)}{2}.$$
Thus (iii) $\Longleftrightarrow$(ii).
\end{proof}

By interchanging the roles of the subspaces $\cK$ and $\sM$ as well
as the roles of the corresponding blocks of $T$ in \eqref{Tblock}
leads to the passive selfadjoint system
$$\eta=\left\{ \begin{bmatrix} D&C\cr C^*&F\end{bmatrix},\cK,\cK,\sM\right\}$$
now with the input-output space $\cK$ and the state space $\sM$. The
transfer function of $\eta$ is given by
\[
 B(z)=F+zC^*(I-zD)^{-1}C,\quad
 z\in\dC\setminus\{(-\infty,-1]\cup[1,+\infty)\}.
\]
By applying Appendix \ref{AppendixB} again one gets for \eqref{faul}
the following alternative expression to parameterize the blocks of
$T$:
\begin{equation}
\label{parame}
 C= D_D N^*,\quad  F=-NDN^*+D_{N^*}XD_{N^*},
\end{equation}
where $N:\sD_D\to\cK$ is a contraction and $X$ is a selfadjoint
contraction in $\sD_{N^*}$. Now, similar to \eqref{ghtltks} one gets
\[
 B(1)=NN^*+D_{N^*}XD_{N^*},\quad B(-1)=-NN^*+D_{N^*}XD_{N^*}.
\]
For later purposes, define the selfadjoint contraction $\wh F$ by
 \begin{equation}
 \label{jghtlta}
\wh F:=D_{N^*}XD_{N^*}=\cfrac{B(-1)+B(1)}{2}.
\end{equation}

The statement in the next lemma can be checked with a
straightforward calculation.

\begin{lemma}\label{LEM1}
Let the entries of the selfadjoint contraction
\[
T=\begin{bmatrix} D&C\cr
C^*&F\end{bmatrix}:\begin{array}{l}\sM\\\oplus\\
\cK\end{array}\to
\begin{array}{l}\sM\\\oplus\\
\cK\end{array}
\]
be parameterized by the formulas \eqref{parame} with a contraction
$N:\sD_D\to\cK$ and a selfadjoint contraction $X$ in $\sD_{N^*}$.
Then the function $W(\cdot)$ defined by
\begin{equation}\label{aeykw}
 W(z)=I+zDN^*\left(I-z\wh F\right)^{-1}N, \quad
 z\in\dC\setminus\{(-\infty,-1]\cup[1,+\infty)\},
\end{equation}
where $\wh F$ is given by \eqref{jghtlta}, is invertible and
\begin{equation}\label{aeykw1}
 W(z)^{-1}=I-zDN^*(I-zF)^{-1}N,\quad z\in\dC\setminus\{(-\infty,-1]\cup[1,+\infty)\}.
\end{equation}
\end{lemma}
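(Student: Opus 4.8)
The plan is to verify directly that the operator $V(z):=I-zDN^*(I-zF)^{-1}N$ is a two-sided inverse of $W(z)$; this simultaneously establishes the invertibility of $W(z)$ and identifies $W(z)^{-1}$ with the right-hand side of \eqref{aeykw1}. First I would record, from the parametrization \eqref{parame} together with the definition \eqref{jghtlta} of $\wh F=D_{N^*}XD_{N^*}$, the relation $F=\wh F-NDN^*$, equivalently
\[
 (I-zF)=(I-z\wh F)+zNDN^*, \qquad z\in\dC\setminus\{(-\infty,-1]\cup[1,+\infty)\}.
\]
On this domain the selfadjoint contractions $F$ and $\wh F$ satisfy $\sigma(F),\sigma(\wh F)\subseteq[-1,1]$, and $1-z\lambda\neq 0$ for every $\lambda\in[-1,1]$ (for real $z$ only $z\in(-1,1)$ occurs, whereas for non-real $z$ the equation $\lambda=1/z$ has no real solution), so both $R(z):=(I-zF)^{-1}$ and $\wh R(z):=(I-z\wh F)^{-1}$ are bounded everywhere on the domain. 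Throughout I read $N$ as the operator $NP_{\sD_D}\in\bB(\sM,\cK)$, so that every expression below is a genuine operator on $\sM$.

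The heart of the computation is the second resolvent identity for $R$ and $\wh R$. Since $(I-z\wh F)-(I-zF)=z(F-\wh F)=-zNDN^*$, one obtains both forms
\[
 \wh R(z)-R(z)=z\,R(z)NDN^*\wh R(z)=z\,\wh R(z)NDN^* R(z).
\]
Abbreviating $a(z):=DN^*R(z)N$ and $b(z):=DN^*\wh R(z)N$ (operators in $\bB(\sM)$), and sandwiching these two equalities between $DN^*(\,\cdot\,)N$, one gets respectively $b-a=z\,ab$ and $b-a=z\,ba$. Consequently
\[
 (I-za)(I+zb)=I+z(b-a)-z^2ab=I, \qquad (I+zb)(I-za)=I+z(b-a)-z^2ba=I,
\]
so that $V(z)=I-za(z)$ is a two-sided inverse of $W(z)=I+zb(z)$. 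This is precisely the content of \eqref{aeykw1}.

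The only genuinely delicate points are bookkeeping rather than conceptual. One must keep careful track of the domains: $N$ maps $\sD_D$ into $\cK$ and is extended by zero on $\ker D_D$, so that $DN^*R(z)N$ and $DN^*\wh R(z)N$ are well-defined operators on $\sM$ and the cancellations in the two products above take place in $\bB(\sM)$; in particular both expressions $z\,ab$ and $z\,ba$ for $b-a$ are valid, which is exactly what makes $V(z)$ a two-sided (not merely one-sided) inverse. One should also confirm the bounded invertibility of $I-zF$ and $I-z\wh F$ across the whole domain $\dC\setminus\{(-\infty,-1]\cup[1,+\infty)\}$, which follows from the spectral inclusions noted above. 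With these verifications the identity is purely algebraic and rests solely on the second resolvent identity.
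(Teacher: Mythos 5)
Your proof is correct. The paper offers no explicit proof of this lemma (it only remarks beforehand that the statement ``can be checked with a straightforward calculation''), and your argument --- extracting $\wh F-F=NDN^*$ from \eqref{parame} and \eqref{jghtlta}, applying the second resolvent identity to $(I-zF)^{-1}$ and $(I-z\wh F)^{-1}$ in both orders, and sandwiching to get the two-sided cancellation $(I-za)(I+zb)=(I+zb)(I-za)=I$ --- is precisely that calculation, with the invertibility of $I-zF$, $I-z\wh F$ on the cut domain and the $\sD_D$-versus-$\sM$ bookkeeping handled correctly.
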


The function $W(\cdot)$ is helpful for proving the next result.

\begin{proposition}\label{schaffen}
Let $\Omega\in\cRS(\sM)$. Then for all
$z\in\dC\setminus\{(-\infty,-1]\cup[1,+\infty)\}$ the function
$\Omega(z)$ can be represented in the form
\begin{equation}\label{mobinn}
\Omega(z)=\Omega(0)+D_{\Omega(0)}\Lambda(z)\left(I+\Omega(0)\Lambda(z)\right)^{-1}D_{\Omega(0)}
\end{equation}
with a function $\Lambda\in\cRS(\sD_{\Omega(0)})$ for which 
$\Lambda(z)=z\Gamma(z)$, where $\Gamma$ is a holomorphic $\bB(\sD_{\Omega(0)})$-valued function
such that $\|\Gamma(z)\|\le 1$ for $z\in\dD$. In particular, $\|\Lambda(z)\|\le|z|$ when
$z\in\dD$.
\end{proposition}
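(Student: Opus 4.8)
The plan is to build $\Lambda$ explicitly from a realization of $\Omega$ and then read off every required property.

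First I would fix a passive selfadjoint realization $\tau=\{T;\sM,\sM,\cK\}$ of $\Omega$ as in \eqref{Tsystem} (such a realization exists by the result recalled in the Introduction), so that $\Omega(0)=D$ is a selfadjoint contraction with $D_{\Omega(0)}=D_D$ and $\sD_{\Omega(0)}=\sD_D$. Using the parametrization \eqref{parame} I write $C=D_DN^*$ and $F=-NDN^*+\wh F$, where $\wh F=D_{N^*}XD_{N^*}$ as in \eqref{jghtlta}, $N\colon\sD_D\to\cK$ is a contraction, and $X$ is a selfadjoint contraction in $\sD_{N^*}$. My candidate is
\[
\Lambda(z):=zN^*(I-z\wh F)^{-1}N,\qquad \Gamma(z):=N^*(I-z\wh F)^{-1}N,
\]
both $\bB(\sD_D)$-valued and holomorphic on $\dC\setminus\{(-\infty,-1]\cup[1,+\infty)\}$ since $\wh F$ is a selfadjoint contraction; plainly $\Lambda(z)=z\Gamma(z)$ and $\Lambda(0)=0$.

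Next I would show $\Lambda\in\cRS(\sD_D)$ by realizing it. The selfadjoint block
\[
S=\begin{bmatrix} 0 & N^*\cr N & \wh F\end{bmatrix}\colon\sD_D\oplus\cK\to\sD_D\oplus\cK
\]
is precisely the parametrized form \eqref{parame} with vanishing feedthrough; hence, by the parametrization of selfadjoint contractions (Proposition~\ref{ParContr} in Appendix~\ref{AppendixB}), $S$ is a selfadjoint contraction, so $\{S;\sD_D,\sD_D,\cK\}$ is a passive selfadjoint system whose transfer function is exactly $\Lambda$. Consequently $\Lambda\uphar\dD$ belongs to the Schur class, giving $\|\Lambda(x)\|\le1$ on $(-1,1)$; since $\Lambda(x)$ is selfadjoint there this reads $-I\le\Lambda(x)\le I$. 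That $\Lambda$ is a Nevanlinna function I would deduce from the identity
\[
\cfrac{\Lambda(z)-\Lambda(w)^*}{z-\bar w}=N^*(I-z\wh F)^{-1}(I-\bar w\wh F)^{-1}N,
\]
which exhibits the Nevanlinna kernel as the Gram-type form $\|\sum_j(I-z_j\wh F)^{-1}Nh_j\|^2\ge0$. Thus $\Lambda\in\cRS(\sD_D)$.

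Then I would verify \eqref{mobinn}. Starting from $\Omega(z)-\Omega(0)=zC(I-zF)^{-1}C^*=D_D[\,zN^*(I-zF)^{-1}N\,]D_D$, I substitute $I-zF=(I-z\wh F)+zNDN^*$ and apply the Woodbury inversion identity. Writing $P(z)=\Gamma(z)=N^*(I-z\wh F)^{-1}N$, this yields
\[
N^*(I-zF)^{-1}N=\bigl(I+zP(z)D\bigr)^{-1}P(z),
\]
and hence, after a push-through, $zN^*(I-zF)^{-1}N=\Lambda(z)\bigl(I+\Omega(0)\Lambda(z)\bigr)^{-1}$, where $I+\Omega(0)\Lambda(z)=W(z)$ is invertible by Lemma~\ref{LEM1}. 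Substituting back gives precisely \eqref{mobinn}. Finally, since $\Lambda$ is holomorphic on $\dD$ with $\|\Lambda(z)\|\le1$ and $\Lambda(0)=0$, the operator Schwarz lemma (apply the scalar Schwarz lemma to $z\mapsto\langle\Lambda(z)u,v\rangle$ for unit vectors $u,v$) gives $\|\Lambda(z)\|\le|z|$ and $\|\Gamma(z)\|=\|z^{-1}\Lambda(z)\|\le1$ on $\dD$. I expect the crux to be this third step: guessing the correct closed form $\Lambda(z)=zN^*(I-z\wh F)^{-1}N$ and carrying out the Woodbury reduction so that the feedthrough $D$ is reinstated exactly as the operator M\"obius transform $\Lambda(I+\Omega(0)\Lambda)^{-1}$, with the needed invertibility furnished by Lemma~\ref{LEM1}.
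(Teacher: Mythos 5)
Your proposal is correct and follows essentially the same route as the paper's own proof: the same parametrization \eqref{parame}, the same candidate $\Lambda(z)=zN^*(I-z\wh F)^{-1}N$ realized as the transfer function of the passive selfadjoint system with block operator $\begin{bmatrix}0&N^*\cr N&\wh F\end{bmatrix}$, the same invertibility input from Lemma~\ref{LEM1}, and the same Schwarz-lemma conclusion. Your Woodbury computation and the explicit Nevanlinna-kernel check are merely algebraically equivalent (respectively redundant) variants of the paper's resolvent-identity chain based on $\wh F-F=NDN^*$.
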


\begin{proof}
To prove the statement, let the function $\Omega$ be realized as the
transfer function of a passive selfadjoint system
$\tau=\left\{T;\sM,\sM,\cK\right\}$ as in \eqref{Tsystem}, i.e.
$\Omega(z)=D+zC(I-zF)^{-1}C^*$. Using \eqref{parame} rewrite
$\Omega$ as
\[
 \Omega(z)=D+zD_D N^*(I-zF)^{-1}N D_D
 =\Omega(0)+zD_{\Omega(0)}N^*(I-zF)^{-1}ND_{\Omega(0)}.
\]
The definition of $\wh F$ in \eqref{jghtlta} implies that the block
operator
\[
\begin{bmatrix} 0&N^* \cr
N&\wh F\end{bmatrix}:\begin{array}{l}\sD_{\Omega(0)}\\\oplus\\\cK\end{array}\to\begin{array}{l}\sD_{\Omega(0)}\\\oplus\\\cK\end{array}
\]
is a selfadjoint contraction (cf. Appendix \ref{AppendixB}).
Consequently, the $\bB(\sD_D)$-valued function
\begin{equation}\label{Lambda01}
 \Lambda(z):=zN^*\left(I_\cK-z\wh F\right)^{-1}N,\quad
z\in\dC\setminus\{(-\infty,-1]\cup[1,+\infty)\},
\end{equation}
is the transfer function of the passive selfadjoint system
\[
\tau_0=\left\{\begin{bmatrix} 0&N^* \cr
N&\wh F\end{bmatrix};\sD_{\Omega(0)},\sD_{\Omega(0)}, \cK\right\}
\]
Hence $\Lambda$ belongs the class $\cRS(\sD_{\Omega(0)})$.
Furthermore, using \eqref{aeykw} and \eqref{aeykw1} in
Lemma~\ref{LEM1} one obtains
\[
I+\Omega(0)\Lambda(z)=I+zDN^*\left(I-z\wh F\right)^{-1}N=W(z)
\]
and
\[
\left(I+\Omega(0)\Lambda(z)\right)^{-1}=W(z)^{-1}=I-zDN^*(I-zF)^{-1}N
\]
for all $z\in\dC\setminus\{(-\infty,-1]\cup[1,+\infty)\}.$ Besides,
in view of \eqref{parame} one has $\wh F-F=NDN^*$. This leads to the
following implications
\[
\begin{split}
 &N^*\left(I-\wh F\right)^{-1}N-N^*(I-zF)^{-1}N=zN^*\left(I-\wh F\right)^{-1}NDN^*(I-zF)^{-1}N\\
&\Longleftrightarrow zN^*\left(I-\wh F\right)^{-1}N\left(I-zDN^*(I-zF)^{-1}N\right)=zN^*(I-zF)^{-1}N \\
&\Longleftrightarrow \Lambda(z)\left(I+\Omega(0)\Lambda(z)\right)^{-1}=zN^*(I-zF)^{-1}N\\
&\Longrightarrow\Omega(z)=\Omega(0)+D_{\Omega(0)}\Lambda(z)\left(I+\Omega(0)\Lambda(z)\right)^{-1}D_{\Omega(0)}.
\end{split}
\]
Since $\Lambda(0)=0$, it follows from Schwartz's lemma that
$||\Lambda(z)||\le|z|$ for all $z$ with $|z|<1$. 
In particular, one has a factorization $\Lambda(z)=z\Gamma(z)$, where $\Gamma$ is a holomorphic $\bB(\sD_{\Omega(0)})$-valued function 
such that $\|\Gamma(z)\|\le 1$ for $z\in\dD$; this is also obvious from \eqref{Lambda01}.  
\end{proof}

One can verify that the following relation for $\Lambda(z)$ holds
\begin{equation}\label{mobinn2}
\Lambda(z)
=D^{(-1)}_{\Omega(0)}(\Omega(z)-\Omega(0))(I-\Omega(0)\Omega(z))^{-1}
D_{\Omega(0)},
\end{equation}
where $D^{(-1)}_{\Omega(0)}$ stands for the Moore-Penrose inverse of
$D_{\Omega(0)}$.

It should be noted that the formula \eqref{mobinn} holds for all
$z\in\dC\setminus\{(-\infty,-1]\cup[1,+\infty)\}$. A general Schur
class function $\Omega\in{\bf S}(\sM,\sN)$ can be represented in the
form
\[
 \Omega(z)=\Omega(0)+D_{\Omega(0)^*}\Lambda(z)\left(I+\Omega(0)^*\Lambda(z)\right)^{-1}D_{\Omega(0)},
 \quad z\in\dD.
\]
This is called a M\"obius representation of $\Omega$ and it can be
found in \cite{BL1976,Cons,Shmul1}.

\section{Inner functions from the class $\mathcal R\mathcal S(\mathfrak M)$}
An operator valued function from the Schur class is called
\textit{inner/co-inner} (or \textit{$*$-inner}) (see e.g. \cite{SF})
if it takes isometric/co-isometric values almost everywhere on the
unit circle $\dT$, and it is said to be  \textit{bi-inner} when it
is both inner and co-inner.

Observe that if $\Omega\in\cRS(\sM)$ then $\Omega(z)^*=\Omega(\bar
z)$. Since $\dT\setminus\{-1,1\}
\subset\dC\setminus\{(-\infty,-1]\cup[1,+\infty)\}$, one concludes
that $\Omega\in\cRS(\sM)$ is inner (or co-inner) precisely when it
is bi-inner. Notice also that every function $\Omega\in\cRS(\sM)$
can be realized as the transfer function of a minimal passive
selfadjoint system $\tau$ as in \eqref{Tsystem}; cf. \cite[Theorem
5.1]{AHS3}.

The next statement contains a characteristic result for transfer
functions of conservative selfadjoint systems.

\begin{proposition}
\label{selfaconserv} Assume that the selfadjoint system
$\tau=\left\{T;\sM,\sM,\cK\right\}$ in \eqref{Tsystem} is
conservative. Then its transfer function
$\Omega(z)=D+zC(I_\cK-zF)^{-1}C^*$ is bi-inner and it takes the form
\begin{equation}\label{forminner}
 \Omega(z)=(z I_\sM+D)(I_\sM+zD)^{-1},\quad
 z\in\dC\setminus\{(-\infty,-1]\cup[1,+\infty)\}.
\end{equation}
On the other hand, if $\tau$ is a minimal passive selfadjoint system
whose transfer function is inner, then $\tau$ is conservative.
\end{proposition}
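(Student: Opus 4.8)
The plan is to prove the two directions by quite different means: the conservative implication by an explicit resolvent computation, and the converse by a defect analysis at the boundary combined with minimality.

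For the first direction, I would observe that a conservative \emph{and} selfadjoint system forces $T=T^*=T^{-1}$, so $T$ is a fundamental symmetry and $T^2=I$. This trivializes the resolvent: from $(I-zT)(I+zT)=(1-z^2)I$ one gets $(I-zT)^{-1}=(1-z^2)^{-1}(I+zT)$ for all $z\in\dC\setminus\{(-\infty,-1]\cup[1,+\infty)\}$. Compressing to $\sM$ and using the Schur--Frobenius relation \eqref{sch-fr} yields
\[
(I_\sM-z\Omega(z))^{-1}=P_\sM(I-zT)^{-1}\uphar\sM=\cfrac{1}{1-z^2}\,(I_\sM+zD),
\]
and solving this linear relation for $\Omega(z)$ produces exactly \eqref{forminner}, namely $\Omega(z)=(zI_\sM+D)(I_\sM+zD)^{-1}$, the factor $I_\sM+zD$ being invertible precisely on the stated domain since $\sigma(D)\subseteq[-1,1]$. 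For bi-innerness I would use $\Omega(z)^*=\Omega(\bar z)$: for $z\in\dT\setminus\{\pm1\}$ one has $\bar z=1/z$, hence $\Omega(z)^*=\Omega(1/z)=(I_\sM+zD)(zI_\sM+D)^{-1}$, and multiplying out gives $\Omega(z)^*\Omega(z)=\Omega(z)\Omega(z)^*=I_\sM$.

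For the converse the crux is an exact defect identity for the colligation. Writing $G(z)=(I-zF)^{-1}C^*$, a direct check shows $T(\eta\oplus zG(z)\eta)=\Omega(z)\eta\oplus G(z)\eta$; applying $\|Tv\|^2=\|v\|^2-\|D_Tv\|^2$ to $v=\eta\oplus zG(z)\eta$ gives, for $\eta\in\sM$ and $z\in\dD$,
\[
\|\eta\|^2-\|\Omega(z)\eta\|^2=(1-|z|^2)\|G(z)\eta\|^2+\left\|D_T\begin{bmatrix}\eta\\ zG(z)\eta\end{bmatrix}\right\|^2 .
\]
I would then integrate over the circle $|z|=r$ and let $r\uparrow1$. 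Innerness gives $\|\Omega(e^{i\theta})\eta\|=\|\eta\|$ a.e., so the (subharmonic) circular mean of $\|\Omega(re^{i\theta})\eta\|^2$ increases to $\|\eta\|^2$ and the left-hand side tends to $0$. On the right, expanding $zG(z)\eta=\sum_{m\ge1}z^mF^{m-1}C^*\eta$ and applying Parseval, the defect term integrates to $\|D_T(\eta\oplus0)\|^2+\sum_{m\ge1}r^{2m}\|D_T(0\oplus F^{m-1}C^*\eta)\|^2$, which is nonnegative and nondecreasing in $r$.

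Since the two nonnegative terms on the right sum to a quantity tending to $0$, each tends to $0$; monotonicity of the defect series then forces every one of its coefficients to vanish, i.e. $D_T\uphar\sM=0$ and $F^nC^*\sM\subseteq\ker D_T$ for all $n\in\dN_0$. Because $\ker D_T$ is closed and, by minimality, the controllable subspace $\cspan\{F^nC^*\sM:\,n\in\dN_0\}$ of \eqref{CO} equals $\cK$, I conclude $\sM\oplus\cK\subseteq\ker D_T$, whence $D_T=0$ and $T$ is isometric; being selfadjoint, $T^2=T^*T=I$, so $\tau$ is conservative. The main obstacle is precisely this boundary-to-interior passage: innerness is only an almost-everywhere statement on $\dT$, and it is the integration of the exact defect identity together with the Parseval/monotone-convergence bookkeeping that converts it into the interior operator identity $D_T=0$.
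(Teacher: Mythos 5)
Your proposal is correct in both directions, but it takes a genuinely different route from the paper's proof in each of them. For the forward direction the paper does not use the fundamental-symmetry observation at all: it invokes the parametrization \eqref{parame} of the contractive block operator (Appendix B), notes that conservativity makes $N$ an isometry and $X$ unitary so that $NN^*+D_{N^*}=I_\cK$ is an orthogonal decomposition, and then inverts $I_\cK-zF$ blockwise to arrive at \eqref{forminner}. Your argument --- that a conservative selfadjoint system forces $T=T^*=T^{-1}$, hence $(I-zT)^{-1}=(1-z^2)^{-1}(I+zT)$, followed by compression via \eqref{sch-fr} --- is shorter and more elementary, bypassing the parametrization machinery entirely (it does implicitly use \eqref{sch-fr} on the full domain $\dC\setminus\{(-\infty,-1]\cup[1,+\infty)\}$ rather than just $\dD$, but this is harmless since both sides are holomorphic there, as the paper itself exploits elsewhere). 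For the converse the paper simply cites Arov's theorem (a passive \emph{simple} system with bi-inner transfer function is conservative and minimal), after remarking that inner and bi-inner coincide in the class $\cRS(\sM)$; you instead reprove the needed special case from scratch via the exact defect identity
\[
\|\eta\|^2-\|\Omega(z)\eta\|^2=(1-|z|^2)\|(I-zF)^{-1}C^*\eta\|^2+\Bigl\|D_T\begin{bmatrix}\eta\cr z(I-zF)^{-1}C^*\eta\end{bmatrix}\Bigr\|^2,
\]
circular integration, Parseval, and the fact that minimality makes $\cspan\{F^nC^*\sM:\,n\in\dN_0\}=\cK$, forcing $D_T=0$. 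The bookkeeping is sound: the defect series coefficients $D_T(\eta\oplus 0)$ and $D_T(0\oplus F^{m-1}C^*\eta)$ are each dominated by a quantity tending to zero, and $\ker D_T$ being a closed subspace containing $\sM$ and the controllable subspace must be all of $\sM\oplus\cK$. What each approach buys: the paper's citation is brief and situates the result inside the general Arov theory (valid for simple, not necessarily minimal, systems); your argument is self-contained, stays entirely within Hardy-space basics, and makes transparent exactly where innerness (boundary isometry) and minimality (controllability) enter the proof.
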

\begin{proof}
Let the entries of $T$ in \eqref{Tblock} be parameterized as in
\eqref{parame}. By assumption $T$ is unitary and hence
$N\in\bB(\sD_D,\cK)$ is isometry and $X$ is selfadjoint and unitary
in the subspace $\sD_{N^*}=\ker N^*$; see Remark \ref{herbst} in
Appendix \ref{AppendixB}. Thus $NN^*$ and $D_{N^*}$ are orthogonal
projections and $NN^*+D_{N^*}=I_\cK$ which combined with
\eqref{parame} leads to
\[
\begin{split}
 \left(I_\cK-zF\right)^{-1}
  &=\left(N(I+z D)N^*+D_{N^*}(I-zX)D_{N^*}\right)^{-1}\\
  &=N(I+zD)^{-1}N^*+D_{N^*}(I-zX)^{-1}D_{N^*},
\end{split}
\]
and, consequently,
\[
\begin{split}
 \Omega(z)&=D+zC(I_\cK-zF)^{-1}C^*\\
 &=D+zD_DN^*\left(N(I+zD)^{-1}N^*+D_{N^*}(I-zX)^{-1}D_{N^*}\right)ND_D\\
 &=D+z(I+zD)^{-1}D^2_D=(z I_\sM+D)(I_\sM+zD)^{-1},
\end{split}
\]
for all $z\in\dC\setminus\{(-\infty,-1]\cup[1,+\infty)\}$. This
proves \eqref{forminner} and this clearly implies that $\Omega(z)$
is bi-inner.

To prove the second statement assume that the transfer function of a
minimal passive selfadjoint system $\tau$ is inner. Then it is
automatically bi-inner. Now, according to a general result of
D.Z.~Arov \cite[Theorem 1]{Arov} (see also \cite[Theorem 1]{ArNu2},
\cite[Theorem 1.1]{AHS2007}), if $\tau$ is a passive simple
discrete-time system with bi-inner transfer function, then $\tau$ is
conservative and minimal. This proves the second statement.
\end{proof}

The formula \eqref{forminner} in Proposition \ref{selfaconserv}
gives a one-to-one correspondence between the operators $D$ from the
operator interval $[-I_\sM, I_\sM]$ and the inner functions from the
class $\cRS(\sM)$. Recall that for $\Omega\in\cRS(\sM)$ the strong
limit values $\Omega(\pm 1)$ exist as selfadjoint contractions; see
\eqref{limits}. The formula \eqref{forminner} shows that if
$\Omega\in\cRS(\sM)$ is an inner function, then necessarily these
limit values are also unitary:
\begin{equation}\label{novinner}
\Omega(1)^2=\Omega(-1)^2=I_\sM.
\end{equation}
However, these two conditions do not imply that $\Omega\in\cRS(\sM)$
is an inner function; cf. Proposition \ref{neuvaa} and Remark
\ref{herbst} in Appendix \ref{AppendixB}.

The next two theorems offer some sufficient conditions for
$\Omega\in\cRS(\sM)$ to be an inner function. The first one shows
that by shifting $\xi\in\dT$ ($|\xi|=1$) away from the real line
then existence of a unitary limit value $\Omega(\xi)$ at a single
point implies that $\Omega\in\cRS(\sM)$ is actually a bi-inner
function.

\begin{theorem} \label{ljcndy}
Let $\Omega$ be a nonconstant function from the class $\cRS(\sM)$.
If $\Omega(\xi)$ is unitary for some $\xi_0\in\dT$, $\xi_0\ne\pm 1$.
Then $\Omega$ is a bi-inner function.
\end{theorem}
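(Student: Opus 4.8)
The plan is to read off an explicit form for $\Omega$ from the combined Nevanlinna--Schur kernel, exploiting that at a point of modulus one the ``mixed'' term of that kernel drops out. That the \emph{combined} structure is essential is already visible in the scalar example $\exp(\bar\xi_0 z-1)$, which is contractive on $\dD$, entire (hence analytic across $\dT$), unimodular at $\xi_0$, yet not inner; it fails to be a Nevanlinna function precisely when $\xi_0\notin\dR$, so the Schur property alone cannot force the conclusion.

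First I would reduce to $\IM\xi_0>0$. Since $\Omega(\bar z)=\Omega(z)^*$, the value $\Omega(\bar\xi_0)=\Omega(\xi_0)^*$ is unitary too and $\bar\xi_0$ lies in the opposite open half--plane, so after possibly replacing $\xi_0$ by $\bar\xi_0$ we may assume $\IM\xi_0>0$. Put $U:=\Omega(\xi_0)$ and consider
\[
 K(z,w)=I_\sM-\Omega^*(w)\Omega(z)-\cfrac{1-\bar w z}{z-\bar w}\bigl(\Omega(z)-\Omega^*(w)\bigr),
\]
which by Theorem~\ref{newchar} is a nonnegative kernel on the part of the domain contained in $\dC_+$. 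Because $|\xi_0|=1$ and $\xi_0\ne\pm1$, inserting $z=w=\xi_0$ annihilates the numerator $1-\bar\xi_0\xi_0$ while $\xi_0-\bar\xi_0\ne0$, so the singular term disappears and $K(\xi_0,\xi_0)=I_\sM-U^*U=0$.

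The key step is to propagate this to the whole row $K(\xi_0,\cdot)$. Using $K(w,z)^*=K(z,w)$, nonnegativity of the $2\times2$ block formed from $\xi_0$ and an arbitrary $w\in\dC_+$ in the domain reads $2\RE\langle K(\xi_0,w)g,f\rangle+\langle K(w,w)g,g\rangle\ge0$ for all $f,g\in\sM$; taking $f=tK(\xi_0,w)g$ and letting $t\to\pm\infty$ forces $K(\xi_0,w)g=0$, whence $K(\xi_0,w)\equiv0$. Writing $c(w)=\frac{1-\bar w\xi_0}{\xi_0-\bar w}$, this identity reads $\Omega^*(w)\bigl(c(w)I-U\bigr)=c(w)U-I$. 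Since $\Omega^*(w)=\Omega(\bar w)$, putting $\zeta=\bar w\in\dC_-$ and clearing the scalar denominator turns it into the polynomial relation
\[
 \Omega(\zeta)\,(P+\zeta Q)=Q+\zeta P,\qquad P:=\xi_0 U-I,\quad Q:=\xi_0 I-U,
\]
first for $\zeta\in\dC_-$ and then, by analytic continuation across the connected domain, for every $\zeta\in\dC\setminus\{(-\infty,-1]\cup[1,+\infty)\}$.

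Finally I would identify the inner form. Evaluating the last relation at $\zeta=0$ gives $\Omega(0)P=Q$; substituting $Q=\Omega(0)P$ back yields $\bigl[\Omega(\zeta)(I+\zeta\Omega(0))-(\zeta I+\Omega(0))\bigr]P=0$. Here $P$ is boundedly invertible: on $\dC_-$ one has the explicit formula $\Omega(\zeta)=(c(\bar\zeta)U-I)(c(\bar\zeta)I-U)^{-1}$ with $|c(\bar\zeta)|<1$, and since $\Omega$ is holomorphic at $\zeta=0$ this formula cannot develop a pole there; letting $\zeta\to0$ (so $c(\bar\zeta)\to\bar\xi_0\in\dT$) forces $\bar\xi_0\notin\sigma(U)$, i.e. $P=\xi_0(U-\bar\xi_0 I)$ is invertible. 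Cancelling $P$ then gives $\Omega(\zeta)=(\zeta I_\sM+\Omega(0))(I_\sM+\zeta\Omega(0))^{-1}$, and $\Omega(0)$ is a selfadjoint contraction by \eqref{limits}. By Proposition~\ref{selfaconserv} every function of this form is bi-inner, which completes the argument. I expect the two technical points, namely the passage $K(\xi_0,\xi_0)=0\Rightarrow K(\xi_0,\cdot)\equiv0$ and the invertibility of $P$ (equivalently $\bar\xi_0\notin\sigma(U)$), to be the only genuine obstacles; everything else is bookkeeping with the M\"obius parametrization. If a forward reference to Theorem~\ref{newchar} is to be avoided, the same vanishing row can instead be produced by combining the Schur kernel of $\Omega\uphar\dD$ with the Nevanlinna kernel of $\Omega$ on $\dC_+$.
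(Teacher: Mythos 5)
Your proof is correct, but it takes a genuinely different route from the paper's. The paper argues system-theoretically: it realizes $\Omega$ as the transfer function of a \emph{minimal} passive selfadjoint system, parameterizes the block entries as in \eqref{faul}, and uses the defect identity for $D_{\Omega(\xi)}$ together with unitarity of the characteristic function $\Delta_F(\xi)$ on $\dT\setminus\{\pm1\}$ to conclude successively that $Y$ is unitary, that $K$ is a partial isometry, and that $\ran K^*$ is invariant under $F$; minimality (in particular $\ker D_F=\{0\}$) then forces $\ran K^*=\cK$, so the colligation $T$ is unitary, the system is conservative, and Proposition \ref{selfaconserv} gives bi-innerness. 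You bypass the realization entirely and work with the kernel of Theorem \ref{newchar}(iii): the diagonal entry $K(\xi_0,\xi_0)$ vanishes at the unimodular non-real point, positivity of the $2\times2$ block kills the whole row $K(\xi_0,\cdot)$, and M\"obius algebra plus analytic continuation across the connected domain yield the explicit form $\Omega(z)=(zI_\sM+\Omega(0))(I_\sM+z\Omega(0))^{-1}$, which is bi-inner by Proposition \ref{selfaconserv} and the remark following it. This use of Theorem \ref{newchar} is a forward reference within the paper's ordering, but it is not circular, since the proof of Theorem \ref{newchar} nowhere invokes Theorem \ref{ljcndy}. What each approach buys: the paper's proof stays inside the Section 3 toolkit and exhibits the conservative realization; yours needs no minimality argument and delivers the inner formula for $\Omega$ directly.

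Two small technical remarks. First, your justification that $P=\xi_0(U-\bar\xi_0 I)$ is boundedly invertible ("the formula cannot develop a pole") is sound but tacitly uses the spectral theorem: in general a product $(cU-I)(cI-U)^{-1}$ could remain bounded while $(cI-U)^{-1}$ blows up; what saves you is that $U$ is unitary, so
\[
\left\|(cU-I)(cI-U)^{-1}\right\|=\sup_{\lambda\in\sigma(U)}\left|\frac{c\lambda-1}{c-\lambda}\right|\ge\left|\frac{c\bar\xi_0-1}{c-\bar\xi_0}\right|\to\infty
\quad\text{as } c\to\bar\xi_0,
\]
whenever $\bar\xi_0\in\sigma(U)$, contradicting local boundedness of $\Omega$ at $0$. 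Second, bounded invertibility is more than you need: applying the identity $\Omega(0)P=Q$ to any $h\in\ker(U-\bar\xi_0 I)$ gives $0=(\xi_0-\bar\xi_0)h$, hence $\ker P=\{0\}$; since $P$ is normal, $\ran P$ is then dense in $\sM$, and density already suffices to cancel $P$ in $\bigl[\Omega(\zeta)(I+\zeta\Omega(0))-(\zeta I+\Omega(0))\bigr]P=0$. This shortcut avoids the spectral-theorem estimate altogether.
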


\begin{proof}
 Let $\tau=\left\{T;\sM,\sM,\cK\right\}$ in \eqref{Tsystem} be a minimal passive
selfadjoint system whose transfer function is $\Omega$ and let the
entries of $T$ be parameterized as in \eqref{faul}. Using the
representation \eqref{dshazom} one can derive the following formula
for all $\xi\in\dT\setminus\{\pm 1\}$:
\[
\left\|D_{\Omega (\xi)}h\right\|^2=
\|D_{\Delta_{F}(\xi)}K^*h\|^2+\|D_{Y}D_{K^*}h\|^2
+\left\|\left(D_{K}\Delta_{F}(\xi)K^*-K^*YD_{K^*}\right)h\right\|^2;
\]
cf. \cite[Theorem~5.1]{AHS2007}, \cite[Theorem~2.7]{AHS3}. Since
$\Delta_F(\xi)$ is unitary for all $\xi\in\dT\setminus\{\pm 1\}$ and
$\Omega(\xi_0)$ is unitary, one concludes that $Y$ is unitary on
$\sD_{K^*}$ and
$\left(D_{K}\Delta_{F}(\xi_0)K^*-K^*YD_{K^*}\right)h=0$ for all
$h\in\sM$.

Suppose that there is $h_0\ne 0$ such that
$D_{K}\Delta_{F}(\xi_0)K^*h_0\ne 0$ and $K^*YD_{K^*}h_0\ne 0$. Then,
due to $D_{K}\Delta_{F}(\xi_0)K^*h_0=K^*YD_{K^*}h_0$, the
equalities $D_K K^*=K^*D_{K^*}$, and
\[
 \ran D_K\cap \ran K^*=\ran D_K K^*=\ran K^*D_{K^*},
\]
see \eqref{commut}, \eqref{ranTDT}, one concludes that there exists
$\f_0\in\sD_{K^*}$ such that
\[
\left\{\begin{array}{l} \Delta_F(\xi_0)K^*h_0=K^*\f_0\\ YD_{K^*}h_0=D_{K^*}\f_0\end{array}\right..
\]
Furthermore, the equality $D_{\Omega (\xi_0)^*}=D_{\Omega
(\bar\xi_0)}=0$ implies
$\left(D_{K}\Delta_{F}(\bar\xi_0)K^*-K^*YD_{K^*}\right)h=0$ for all
$h\in\sM$. Now $YD_{K^*}h_0=D_{K^*}\f_0$ leads to
$\Delta_F(\bar\xi_0)K^*h_0=K^*\f_0$. It follows that
\[
\Delta_F(\xi_0)K^*h_0=\Delta_F(\bar\xi_0)K^*h_0.
\]
Because
$\Delta_F(\bar\xi_0)=\Delta_F(\xi_0)^*=\Delta_F(\xi_0)^{-1}$, one
obtains $\left(I-\Delta_F(\xi_0)^2\right)K^*h_0=0.$ From
\[
\Delta_F(\xi_0)=(\xi_0 I-F)(I-\xi_0F)^{-1}
\]
it follows that
\[
(1-\xi_0^2)(I-\xi_0 F)^{-2}(I-F^2)K^*h_0=0.
\]
Since $\ker D_F=\{0\}$ (because the system $\tau$ is minimal), we
get $K^*h_0=0$. Therefore, $D_{K}\Delta_{F}(\xi_0)K^*h_0=0$ and
$K^*YD_{K^*}h_0=0$. One concludes that
\[
\left\{\begin{array}{l} D_{K}\Delta_F(\xi_0)K^*h=0\\K^*YD_{K^*}h=0\end{array}\right. \;\forall h\in\sM.
\]
The equality $\ran Y=\sD_{K^*}$ implies $K^*D_{K^*}=D_K K^*=0$.
Therefore $K$ is a partial isometry. The equality
$D_{K}\Delta_F(\xi_0)K^*=0$ implies $\ran
(\Delta_F(\xi_0)K^*)\subseteq\ran K^*$. Representing
$\Delta_F(\xi_0)$ as
\[
\Delta_F(\xi_0)= (\xi_0 I-F)(I-\xi_0 F)^{-1}K^*=\left(\bar\xi_0 I+(\xi_0-\bar\xi_0)(I-\xi_0 F)^{-1}\right)K^*,
\]
we obtain that $F(\ran K^*)\subseteq\ran K^*$. Hence $F^nD_F(\ran
K^*)\subseteq\ran K^*$ for all $n\in\dN_0$. Because the system
$\tau$ is minimal it follows that $\ran K^*=\sD_F=\cK$, i.e., $K$ is
isometry and hence $T$ is unitary (see Appendix \ref{AppendixB}).
This implies that $D_{\Omega(\xi)}=0$ for all $\zeta\in
\dT\setminus\{-1,1\}$, i.e., $\Omega$ is inner and, thus also
bi-inner.
\end{proof}
\begin{theorem} \label{thinne}
Let $\Omega\in\cRS(\sM)$. If the equalities \eqref{novinner} hold
and, in addition, for some $a\in(-1,1)$, $a\ne 0$, the equality
\begin{equation}
\label{omoa}
(\Omega(a)-aI_\sM)(I_\sM-a\Omega(a))^{-1}=\Omega(0)
\end{equation}
is satisfied, then $\Omega$ is bi-inner.
\end{theorem}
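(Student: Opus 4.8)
The plan is to show that the single constraint \eqref{omoa} already pins $\Omega$ down to the inner form \eqref{forminner}. First I would restate the hypothesis in a more convenient way. Since $W_a$ and $W_{-a}$ are mutually inverse, \eqref{omoa} (which reads $W_{-a}(\Omega(a))=\Omega(0)$) is equivalent to
\[
\Omega(a)=(aI_\sM+\Omega(0))(I_\sM+a\Omega(0))^{-1},
\]
and here $I_\sM+a\Omega(0)$ is boundedly invertible because $\Omega(0)$ is a selfadjoint contraction and $|a|<1$. The target is to prove $\Omega(z)=(zI_\sM+\Omega(0))(I_\sM+z\Omega(0))^{-1}$, which by Proposition \ref{selfaconserv} is bi-inner.

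The tool I would use is the M\"obius representation of Proposition \ref{schaffen}: write $\Omega$ as in \eqref{mobinn} with $\Lambda\in\cRS(\sD_{\Omega(0)})$, $\Lambda(0)=0$, $\Lambda(z)=z\Gamma(z)$ and $\|\Gamma(z)\|\le1$ on $\dD$. The idea is to compute $\Lambda(a)$. Evaluating \eqref{mobinn} at $z=a$ and inserting the rewritten hypothesis, I would use the elementary operator identity $\Omega(a)-\Omega(0)=a\bigl(I-\Omega(0)\Omega(a)\bigr)=aD_{\Omega(0)}^2(I+a\Omega(0))^{-1}$ (which follows from $\Omega(0)=\Omega(0)^*$ and the explicit form of $\Omega(a)$). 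Factoring the right-hand side as $D_{\Omega(0)}\,a(I+a\Omega(0))^{-1}D_{\Omega(0)}$ and cancelling the outer factors $D_{\Omega(0)}$ on the subspace $\sD_{\Omega(0)}$ — which is legitimate because $D_{\Omega(0)}$ is injective with dense range there, and $\Omega(0)$ reduces $\sD_{\Omega(0)}$ — reduces the matter to $\Lambda(a)(I+\Omega(0)\Lambda(a))^{-1}=a(I+a\Omega(0))^{-1}$ on $\sD_{\Omega(0)}$. Solving this for $\Lambda(a)$ (multiply out and note $I-a\Omega(0)(I+a\Omega(0))^{-1}=(I+a\Omega(0))^{-1}$) gives $\Lambda(a)=a\,I_{\sD_{\Omega(0)}}$.

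The decisive step, and the one I expect to be the main obstacle, is an operator-valued Schwarz/maximum-modulus rigidity argument. From $\Lambda(a)=aI_{\sD_{\Omega(0)}}$ and $a\ne0$ we get $\Gamma(a)=I_{\sD_{\Omega(0)}}$, so the contraction-valued holomorphic $\Gamma$ attains a \emph{unitary} value at the interior point $a\in\dD$. For each unit vector $v\in\sD_{\Omega(0)}$ the scalar function $z\mapsto\langle\Gamma(z)v,v\rangle$ is holomorphic, bounded by $1$, and equals $1$ at $z=a$; by the maximum-modulus principle it is identically $1$, whence $\|\Gamma(z)v-v\|^2\le\|\Gamma(z)v\|^2-2\RE\langle\Gamma(z)v,v\rangle+\|v\|^2\le0$ and therefore $\Gamma(z)v=v$. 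Hence $\Gamma\equiv I_{\sD_{\Omega(0)}}$ on $\dD$, so $\Lambda(z)=z\,I_{\sD_{\Omega(0)}}$ there and, by the identity theorem on the connected domain $\dC\setminus\{(-\infty,-1]\cup[1,+\infty)\}$, everywhere.

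Substituting $\Lambda(z)=zI_{\sD_{\Omega(0)}}$ back into \eqref{mobinn} and using that $\Omega(0)$ and $D_{\Omega(0)}$ commute with $D_{\Omega(0)}^2=I-\Omega(0)^2$, the representation collapses to
\[
\Omega(z)=\Omega(0)+z(I-\Omega(0)^2)(I+z\Omega(0))^{-1}=(zI_\sM+\Omega(0))(I_\sM+z\Omega(0))^{-1},
\]
which is precisely the inner form \eqref{forminner} with $D=\Omega(0)\in[-I_\sM,I_\sM]$; by Proposition \ref{selfaconserv} this function is bi-inner, finishing the proof. I would remark that the argument in fact uses only \eqref{omoa}, the conditions \eqref{novinner} being recovered a posteriori, since every inner function satisfies \eqref{novinner} by \eqref{forminner}. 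The only genuinely delicate points are the rigidity step above and the care needed in cancelling $D_{\Omega(0)}$ (via its Moore--Penrose inverse, as in \eqref{mobinn2}) on the possibly proper subspace $\sD_{\Omega(0)}$; the remaining manipulations are routine algebra.
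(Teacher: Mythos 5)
Your proof is correct, and it takes a genuinely different route from the paper's. The paper argues realization-theoretically: it fixes a \emph{minimal} passive selfadjoint realization of $\Omega$, uses Proposition \ref{neuvaa} to translate \eqref{novinner} into ``$K$ is a partial isometry and $Y^2=I_{\ker K^*}$'', and then by block-operator algebra (with the projection onto $\ran K^*$ and the Schur--Frobenius formula) shows that \eqref{omoa} forces $\ran K^*$ to reduce $F$; minimality then gives $\ran K^*=\cK$, so the colligation is unitary and Proposition \ref{selfaconserv} yields bi-innerness. You instead work function-theoretically from the M\"obius representation of Proposition \ref{schaffen}: the algebra giving $\Lambda(a)=a\,I_{\sD_{\Omega(0)}}$ is sound (your cancellation of the outer factors $D_{\Omega(0)}$ is legitimate exactly as you say, since $D_{\Omega(0)}$ is injective with dense range on $\sD_{\Omega(0)}$ and $\Omega(0)$ reduces that subspace), so $\Gamma(a)=I$ is a unitary value at an interior point, and the maximum-modulus rigidity argument pins down $\Gamma\equiv I$, hence $\Omega(z)=(zI_\sM+\Omega(0))(I_\sM+z\Omega(0))^{-1}$, the inner form \eqref{forminner}. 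Notably, this is the same mechanism the paper itself employs for the neighbouring Theorem \ref{cjdcyjd}, where $\Gamma(a)=-I$ is shown to force $\Gamma\equiv-I$; you have in effect transplanted that technique to Theorem \ref{thinne}. Your approach buys two things: it produces the explicit formula for $\Omega$, and it shows that the hypothesis \eqref{novinner} is redundant, since your argument uses \eqref{omoa} alone. That strengthening is genuine and correct; one can cross-check it in the scalar case via the integral representation \eqref{yjdbyn}, where \eqref{omoa} becomes
\begin{equation*}
a\int_{-1}^{1}\!\!\int_{-1}^{1}\frac{(t-s)^2}{(1+at)(1+as)}\,d\sigma(t)\,d\sigma(s)=0,
\end{equation*}
which for $a\ne 0$ forces $\sigma$ to be a point mass, i.e.\ $\Omega$ to be a M\"obius (inner) function. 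The price of your route is that it leans on Proposition \ref{schaffen} (itself established by realization theory), whereas the paper's proof of this theorem stays entirely inside the system-realization framework it develops and exhibits the conservative realization directly.
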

\begin{proof}
Let $\tau=\left\{T;\sM,\sM,\cK\right\}$ be a minimal passive
selfadjoint system as in \eqref{Tsystem} with the transfer function
$\Omega$ and let the entries of $T$ in \eqref{Tblock} be
parameterized as in \eqref{faul}. According to Proposition
\ref{neuvaa} the equalities \eqref{novinner} mean that $K$ is a
partial isometry and $Y^2=I_{\ker K^*}$.

Since $D_{K^*}$ is the orthogonal projection, $\ran Y\subseteq\ran D_{N^*}$, from \eqref{dshazom} we have
$$\Omega(z)=YD_{K^*}+K(zI-F)(I-zF)^{-1}K^*.$$
Rewrite \eqref{omoa} in the form
\begin{equation}\label{zeqz}
\Omega(0)(I_\sM-a\Omega(a))=\Omega(a)-aI_\sM.
\end{equation}
This leads to
\begin{multline*}
(-KFK^*+ YD_{K^*})\left(I_\sM-a\left(YD_{K^*}+K(aI-F)(I-aF)^{-1}K^*\right)\right)\\
=YD_{K^*}+K(aI-F)(I-aF)^{-1}K^*-aI_\sM,
\end{multline*}

\begin{multline*}
(-KFK^*+ YD_{K^*})\left((I-a Y)D_{K^*}+K\bigl(I-a\left(aI-F\right)\left(I-aF\right)^{-1}\right)K^*\bigr)\\
=(Y-aI)D_{K^*}+K\left((aI-F)(I-aF)^{-1}-a I\right)K^*,
\end{multline*}

\begin{multline*}
-KFK^*K\left(I-a(aI-F)(I-aF)^{-1}\right)K^*+ Y(I-a Y)D_{K^*}\\
=(Y-aI)D_{K^*}+K\left((aI-F)(I-aF)^{-1}-a I\right)K^*.
\end{multline*}
Let $P$ be an orthogonal projection from $\cK$ onto $\ran K^*$.
Since $K$ is a partial isometry, one has $K^*K=P.$ The equality
$Y^2=I_{\sD_{K^*}}$ implies $Y(I-a Y)D_{K^*}=(Y-aI)D_{K^*}$. This
leads to the following identities:
\[
\begin{array}{l}
K\biggl(-FP\left(I-a(aI-F)(I-aF)^{-1}\right)-
(aI-F)(I-aF)^{-1}+a I\biggr)K^*=0,\\[3mm]
KF(I_\sM-P)(I-aF)^{-1}K^*=0,\\[3mm]
PF(I_\sM-P)(I-aF)^{-1}P=0.
\end{array}
\]
Represent the operator $F$ in the block form
\[
F=\begin{bmatrix} F_{11}&F_{12}\cr F_{12}^*& F_{22} \end{bmatrix}:\begin{array}{l}\ran P\\\oplus\\\ran(I-P)\end{array}\to
 \begin{array}{l}\ran P\\\oplus\\\ran(I-P)\end{array}.
\]
Define
\[
\Theta(z)=F_{11}+zF_{12}(I-zF_{22})^{-1}F_{12}^*.
\]
Since $F$ is a selfadjoint contraction, the function $\Theta$
belongs to the class $\cRS(\ran P)$. From the Schur-Frobenius
formula \eqref{Sh-Fr1} it follows that
\[
(I-P)(I-aF)^{-1}P=a(I-aF_{22})^{-1}F^*_{12}(I-a\Theta(a))^{-1}P.
\]
This equality yields the equivalences
\begin{multline*}
PF(I_\sM-P)(I-aF)^{-1}P=0\Longleftrightarrow F_{12}(I-aF_{22})^{-1}F^*_{12}(I-a\Theta(a))^{-1}P=0\\
\Longleftrightarrow F_{12}(I-aF_{22})^{-1}F^*_{12}=0 \Longleftrightarrow (I-aF_{22})^{-1/2}F^*_{12}=0 \Longleftrightarrow F^*_{12}=0.
\end{multline*}
It follows that the subspace $\ran K^*$ reduces $F$. Hence $\ran
K^*$ reduces $D_F$ and, therefore $F^nD_F\ran K^*\subseteq\ran K^*$
for an arbitrary $n\in\dN_0$. Since the system $\tau$ is minimal, we
get $\ran K^*=\cK$ and this implies that $K$ is an isometry. Taking
into account that $Y^2=I_{\sD_{K^*}}$, we get that the block
operator $T$ is unitary. By Proposition \ref{selfaconserv} $\Omega$
is bi-inner.
\end{proof}

For completeness we recall the following result on the limit values
$\Omega(\pm1)$ of functions $\Omega\in{\bf S}^{qs}(\sM)$ from
\cite[Theorem 5.8]{AHS3}.

\begin{lemma}\label{INNER}
Let $\sM$ be a Hilbert space and let $\Omega\in{\bf S}^{qs}(\sM)$.
Then:
\begin{enumerate}
\item
if $\Omega(\lambda)$ is inner then
\begin{equation}
\label{INNER1b}
\begin{array}{l}
 \left(\dfrac{\Omega(1)-\Omega(-1)}{2}\right)^2
  =\dfrac{\Omega(1)-\Omega(-1)}{2},\\[5mm]
 (\Omega(1)+\Omega(-1))^*(\Omega(1)+\Omega(-1))
  =4I_\sM-2\left(\Omega(1)-\Omega(-1)\right);
\end{array}
\end{equation}
\item
if $\Omega$ is co-inner then
\begin{equation}
\label{COINNER1}
\begin{split}
&\left(\frac{\Omega(1)-\Omega(-1)}{2}\right)^2
=\frac{\Omega(1)-\Omega(-1)}{2},\\[3mm]
&(\Omega(1)+\Omega(-1))(\Omega(1)+\Omega(-1))^*
=4I_\sM-2\left(\Omega(1)-\Omega(-1)\right);
\end{split}
\end{equation}
\item
if \eqref{INNER1b}/\eqref{COINNER1} holds and $\Omega(\xi)$ is
isometric/co-isometric  for some $\xi \in\dT$, $\xi\ne \pm 1$, then
$\Omega$ is inner/co-inner.
\end{enumerate}
\end{lemma}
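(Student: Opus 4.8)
The plan is to fix a realization of $\Omega$ as the transfer function of a minimal $pqs$-system $\tau=\{T;\sM,\sM,\cK\}$ with $T=\begin{bmatrix} D&C\cr C^*&F\end{bmatrix}$, $F=F^*$, and to parametrize its entries as in \eqref{faul}, i.e. $C=KD_F$ and $D=-KFK^*+D_{K^*}YD_{K^*}$, now with a (not necessarily selfadjoint) contraction $Y\in\bB(\sD_{K^*})$, since for $\Omega\in{\bf S}^{qs}(\sM)$ the block $D$ need not be selfadjoint. As in \eqref{dshazom}--\eqref{ghtltks} this gives $\Omega(z)=D_{K^*}YD_{K^*}+K\Delta_F(z)K^*$ and, using the strong limits $\Delta_F(\pm1)=\pm I_{\sD_F}$,
\[
\frac{\Omega(1)-\Omega(-1)}{2}=KK^*,\qquad \frac{\Omega(1)+\Omega(-1)}{2}=D_{K^*}YD_{K^*}.
\]
The engine is the defect identity obtained by a direct expansion (as in \cite{AHS2007,AHS3}; it uses only \eqref{commut} and $D_{K^*}^2=I-KK^*$, so it is valid for non-selfadjoint $Y$): for every $\xi\in\dT\setminus\{\pm1\}$, where $\Delta_F(\xi)$ is unitary,
\[
\|D_{\Omega(\xi)}h\|^2=\|D_YD_{K^*}h\|^2+\|(K^*YD_{K^*}-D_K\Delta_F(\xi)K^*)h\|^2,
\]
together with its adjoint analogue, in which $D_{\Omega(\xi)^*}$ appears on the left and $Y,\Delta_F(\xi)$ are replaced by $Y^*,\Delta_F(\xi)^*$ on the right.

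For (1), $\Omega$ inner means $\Omega(\xi)$ is isometric a.e. on $\dT$; by holomorphy on $\dT\setminus\{\pm1\}$ this forces $D_{\Omega(\xi)}=0$ for \emph{every} such $\xi$. The identity then splits into $D_YD_{K^*}=0$ and $K^*YD_{K^*}=D_K\Delta_F(\xi)K^*$. The right-hand side is holomorphic and equals a constant on $\dT\setminus\{\pm1\}$, hence is constant on the whole domain; letting $\xi\to\pm1$ yields $D_KK^*=-D_KK^*$, i.e. $D_KK^*=0$, so $K$ is a partial isometry. Since $D_KK^*=0\Leftrightarrow(KK^*)^2=KK^*$, this is the first identity in \eqref{INNER1b}; and $D_YD_{K^*}=0$ says $Y$ is isometric on $\sD_{K^*}$, which upon substituting $\tfrac{\Omega(1)+\Omega(-1)}{2}=D_{K^*}YD_{K^*}$ and $I-KK^*=D_{K^*}^2$ becomes $(\Omega(1)+\Omega(-1))^*(\Omega(1)+\Omega(-1))=4D_{K^*}^2=4I-2(\Omega(1)-\Omega(-1))$, the second identity in \eqref{INNER1b}. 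Part (2) is the verbatim dual: the adjoint defect identity turns co-innerness into $D_{Y^*}D_{K^*}=0$ (so $Y$ is a co-isometry) and again $D_KK^*=0$, which translate into \eqref{COINNER1}.

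For (3) assume \eqref{INNER1b}; by the equivalences just established $K$ is a partial isometry and $Y$ is isometric, so $D_{K^*}$ is the orthogonal projection onto $\sD_{K^*}=\ker K^*$, $D_YD_{K^*}=0$, and $\ran(YD_{K^*})\subseteq\ker K^*$ gives $K^*YD_{K^*}=0$. The defect identity therefore collapses to $\|D_{\Omega(\xi)}h\|^2=\|D_K\Delta_F(\xi)K^*h\|^2$, so $\Omega$ is inner precisely when $D_K\Delta_F(\xi)K^*=0$ for all $\xi\in\dT\setminus\{\pm1\}$. The hypothesis that $\Omega(\xi_0)$ is isometric gives this at the single point $\xi_0$, i.e. $\Delta_F(\xi_0)\,\cran K^*\subseteq\ker D_K=\cran K^*$. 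From here the plan is: prove that $\cran K^*$ \emph{reduces} $F$; then $\Delta_F(\xi)$, being a function of $F$, leaves $\cran K^*$ invariant for \emph{every} $\xi$, whence $D_K\Delta_F(\xi)K^*=0$ identically and $\Omega$ is inner (the co-inner case is dual). The point of passing from $\Delta_F(\xi_0)$ to $F$ is that, $F$ being selfadjoint, every $F$-invariant subspace is automatically reducing, after which minimality in the form $\cspan\{F^nD_FK^*:\,n\in\dN_0\}=\cK$ of \eqref{minim21} forces $\cran K^*=\cK$, i.e. $K$ isometric and $D_K=0$.

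I expect the genuine obstacle to be exactly the implication ``$\Delta_F(\xi_0)\,\cran K^*\subseteq\cran K^*$ $\Rightarrow$ $F\,\cran K^*\subseteq\cran K^*$''. Invariance of a closed subspace under the single \emph{unitary} $\Delta_F(\xi_0)$ does not by itself entail invariance under $F$: unlike for the selfadjoint $F$, a unitary may have invariant subspaces that fail to reduce it. In the companion Theorem \ref{ljcndy} this difficulty evaporates because $\Omega(\xi_0)$ is assumed \emph{unitary}; applying the adjoint defect identity \emph{at $\xi_0$} then also yields $D_K\Delta_F(\bar\xi_0)K^*=0$, so $\cran K^*$ is invariant under both $\Delta_F(\xi_0)$ and $\Delta_F(\bar\xi_0)=\Delta_F(\xi_0)^{-1}$ and hence reduces it. Here only isometry at $\xi_0$ is given, so that second, independent invariance is unavailable, and the reduction has to be extracted from minimality itself — combining the invariance $\Delta_F(\bar\xi_0)\ker K\subseteq\ker K$ (the adjoint form of what we have) with the representation $\Delta_F(\xi_0)=\bar\xi_0 I+(\xi_0-\bar\xi_0)(I-\xi_0 F)^{-1}$ exactly as in the proof of Theorem \ref{ljcndy}. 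Making this last combination airtight is the crux of (3); parts (1) and (2), by contrast, are pure bookkeeping with the two defect identities and require no minimality.
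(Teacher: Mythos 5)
A preliminary remark: the paper itself contains no proof of Lemma \ref{INNER}; it is recalled from \cite[Theorem 5.8]{AHS3}. So your proposal can only be measured against the closely related arguments the paper does contain (Proposition \ref{neuvaa}, Theorems \ref{ljcndy} and \ref{thinne}), and that is in fact the machinery you use. Parts (1) and (2) of your argument are correct and complete: the parametrization \eqref{faul} with a non-selfadjoint contraction $Y$ is the right one for $pqs$-systems; the defect identity does survive non-selfadjoint $Y$ (its derivation uses only \eqref{commut} and $D_{K^*}^2=I-KK^*$); and your identity-theorem step --- $D_K\Delta_F(z)K^*$ is holomorphic, equals the constant $K^*YD_{K^*}$ on $\dT\setminus\{\pm 1\}$, hence everywhere, and its strong limits at $\pm 1$ give $D_KK^*=-D_KK^*=0$ --- is clean. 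The translation of $D_KK^*=0$, $D_YD_{K^*}=0$ (resp. $D_{Y^*}D_{K^*}=0$) into \eqref{INNER1b} (resp. \eqref{COINNER1}) then runs exactly as in Proposition \ref{neuvaa}.

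Part (3), however, has a genuine gap, which you flag yourself: you never prove that $\ran K^*$ reduces $F$, and your pointer ``exactly as in the proof of Theorem \ref{ljcndy}'' cannot close it. In that proof, the passage from invariance of $\ran K^*$ under $\Delta_F(\xi_0)$ to invariance under $F$ has at its disposal \emph{both} $D_K\Delta_F(\xi_0)K^*=0$ and $D_K\Delta_F(\bar\xi_0)K^*=0$, the second coming from $D_{\Omega(\xi_0)^*}=D_{\Omega(\bar\xi_0)}=0$, i.e.\ precisely from unitarity of $\Omega(\xi_0)$; with both relations, $P=P_{\ran K^*}$ commutes with the unitary $\Delta_F(\xi_0)$, hence with $(I-\xi_0F)^{-1}$ (an affine function of $\Delta_F(\xi_0)$), hence with $F$. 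Under your hypothesis $\Omega(\xi_0)$ is merely isometric, the relation at $\bar\xi_0$ is unavailable, and invariance of a closed subspace under a single unitary does not imply reduction. The gap can be closed with a tool the paper already uses, namely the Schur-Frobenius computation in the proof of Theorem \ref{thinne}: from $D_K\Delta_F(\xi_0)K^*=0$ and $\Delta_F(\xi_0)=\bar\xi_0 I+(\xi_0-\bar\xi_0)(I-\xi_0F)^{-1}$ one gets $(I-P)(I-\xi_0F)^{-1}P=0$; writing $F$ in block form with respect to $\ran P\oplus\ran(I-P)$ and applying \eqref{Sh-Fr1} exactly as in that proof (with $a$ replaced by $\xi_0$; all inverses exist because $\IM\xi_0\ne 0$), this becomes $\xi_0(I-\xi_0F_{22})^{-1}F_{12}^*\left(I-\xi_0\Theta(\xi_0)\right)^{-1}P=0$, whence $F_{12}=0$. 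Selfadjointness of $F$ is what converts the one-sided information into reduction: killing the single block $F_{12}$ kills both off-diagonal blocks. Once $\ran K^*$ reduces $F$, one has $D_K\Delta_F(\xi)K^*=0$ for every $\xi$, so your collapsed defect identity gives $D_{\Omega(\xi)}=0$ on $\dT\setminus\{\pm 1\}$, i.e.\ $\Omega$ is inner, and minimality \eqref{minim21} additionally yields $\ran K^*=\cK$, i.e.\ a conservative realization; the co-inner half of (3) is dual. In short: (1)--(2) stand as a proof, but (3) as written is a plan whose decisive step is missing, albeit fillable by an argument already present in the paper.
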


\begin{proposition}\label{normalop}
If $\Omega\in\cRS(\sM)$ is an inner function, then
$$\Omega(z_1)\Omega(z_2)=\Omega(z_2)\Omega(z_1), \quad
\forall z_1,z_2\in\dC\setminus\{(-\infty,-1]\cup[1,+\infty)\}.$$
In particular, $\Omega(z)$ is a normal operator for each $z\in\dC\setminus\{(-\infty,-1]\cup[1,+\infty)\}$.
\end{proposition}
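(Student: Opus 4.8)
The plan is to reduce everything to the explicit structural description of inner functions already obtained in Proposition~\ref{selfaconserv}. First I would realize the given inner function $\Omega\in\cRS(\sM)$ as the transfer function of a minimal passive selfadjoint system $\tau=\left\{T;\sM,\sM,\cK\right\}$ as in \eqref{Tsystem}; such a realization exists for every member of $\cRS(\sM)$, as recalled at the beginning of this section. Since $\Omega$ is inner (hence bi-inner) and $\tau$ is minimal and passive, the second part of Proposition~\ref{selfaconserv} forces $\tau$ to be conservative, and then the first part of that proposition yields the closed form
\[
\Omega(z)=(zI_\sM+D)(I_\sM+zD)^{-1},\qquad z\in\dC\setminus\{(-\infty,-1]\cup[1,+\infty)\},
\]
with $D=\Omega(0)=D^*$ a selfadjoint contraction, $\|D\|\le 1$.

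The crucial observation is that this representation exhibits every value $\Omega(z)$ as a scalar M\"obius function of the \emph{single} fixed selfadjoint operator $D$. Indeed, putting $f_z(t)=(z+t)/(1+zt)$, one has $\Omega(z)=f_z(D)$ in the sense of the functional calculus: because $\sigma(D)\subseteq[-1,1]$, and the forbidden values $z=-t^{-1}$ for $t\in[-1,1]$ lie precisely in $(-\infty,-1]\cup[1,+\infty)$, the denominator $1+zt$ never vanishes on $\sigma(D)$, so $f_z$ is continuous on $\sigma(D)$ and $I+zD$ is boundedly invertible for every admissible $z$. Since any two functions of the same selfadjoint operator commute, I immediately obtain
\[
\Omega(z_1)\Omega(z_2)=f_{z_1}(D)f_{z_2}(D)=f_{z_2}(D)f_{z_1}(D)=\Omega(z_2)\Omega(z_1)
\]
for all admissible $z_1,z_2$, which is the asserted commutativity. (Alternatively, one can verify this purely algebraically, noting that $(I+z_1D)^{-1}$, $(I+z_2D)^{-1}$ and $D$ pairwise commute, but the functional calculus makes the point transparent.)

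For the normality claim I would combine this commutativity with the symmetry relation $\Omega(z)^*=\Omega(\bar z)$ noted at the start of the section. Taking $z_1=z$ and $z_2=\bar z$ gives
\[
\Omega(z)\Omega(z)^*=\Omega(z)\Omega(\bar z)=\Omega(\bar z)\Omega(z)=\Omega(z)^*\Omega(z),
\]
so $\Omega(z)$ is normal for each admissible $z$. I do not anticipate a genuine obstacle here: once Proposition~\ref{selfaconserv} is invoked, the whole argument rests on the elementary fact that a family of functions of one selfadjoint operator is a commutative set. The only point requiring a word of care is the verification that $I+zD$ is invertible and that $f_z$ is legitimately applied through the functional calculus, which is exactly the non-vanishing of $1+zt$ on $[-1,1]$ for $z$ outside the two rays.
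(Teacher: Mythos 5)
Your proof is correct and follows essentially the same route as the paper's: both reduce to the explicit form \eqref{forminner} via Proposition~\ref{selfaconserv}, obtain commutativity because every value $\Omega(z)$ is a M\"obius function of the single selfadjoint operator $D=\Omega(0)$, and derive normality from commutativity together with the symmetry $\Omega(z)^*=\Omega(\bar z)$. The only difference is that you spell out details (the minimal realization step, the invertibility of $I+zD$, the functional-calculus justification) that the paper's two-line proof leaves implicit.
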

\begin{proof}
The commutativity property follows from \eqref{forminner}, where
$D=\Omega(0)$. Normality follows from commutativity and symmetry
$\Omega(z)^*=\Omega(\bar z)$ for all $z$.
\end{proof}

\section{Characterization of the class $\mathcal R\mathcal S(\mathfrak M)$}\label{charact}

\begin{theorem}
\label{newchar} Let $\Omega$ be an operator valued Nevanlinna
function defined on $\dC\setminus\{(-\infty,-1]\cup[1,+\infty)\}$.
Then the following statements are equivalent:
\begin{enumerate}
\def\labelenumi{\rm (\roman{enumi})}
\item $\Omega$
 belongs to the class $\cRS(\sM)$;
\item $\Omega$ satisfies the inequality
\begin{equation}
\label{ythfd1}
 I-\Omega^*(z)\Omega(z)-(1-|z|^2)\cfrac{\IM \Omega (z)}{\IM z}\ge
 0,\quad \IM z\ne 0;
\end{equation}

\item the function
\[
K(z,w):=I-\Omega^*(w)\Omega(z)-\cfrac{1-\bar w z}{z-\bar w}\,\left(\Omega(z)-\Omega^*(w)\right)
\]
is a nonnegative kernel on the domains
$$\dC\setminus\{(-\infty,-1]\cup [1,+\infty)\},\; \IM z>0\quad\mbox{and}\quad  \dC\setminus\{(-\infty,-1]\cup [1,+\infty)\},\; \IM z<0;$$

\item the function
\begin{equation}
\label{formula3}
\Upsilon(z)=\left(zI-\Omega(z)\right)\left(I-z\Omega(z)\right)^{-1},\quad
z\in\dC\setminus\{(-\infty,-1]\cup[1,+\infty)\},
\end{equation}
is well defined and belongs to $\cRS(\sM)$.

\end{enumerate}
\end{theorem}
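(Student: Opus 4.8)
The plan is to prove the four statements equivalent by running a cycle (i)$\Rightarrow$(iii)$\Rightarrow$(ii)$\Rightarrow$(i) among the first three, and then to attach (iv) by exploiting the fact that the transform $\Upsilon={\bf\Phi}(\Omega)$ of \eqref{formula3}, \eqref{TREIN} is an involution, together with two congruence identities that interchange the Schur-type kernel $K(z,w)$ and the Nevanlinna kernel $N_\Omega(z,w):=(\Omega(z)-\Omega^*(w))/(z-\bar w)$. The realization of $\Omega$ as a transfer function of a passive selfadjoint system is the main computational tool.

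For (i)$\Rightarrow$(iii) I would realize $\Omega$ as the transfer function of a passive selfadjoint system $\tau=\{T;\sM,\sM,\cK\}$ as in \eqref{Tsystem}, so $\Omega(z)=D+zC(I-zF)^{-1}C^*$ with $T=\begin{bmatrix}D&C\\C^*&F\end{bmatrix}$ a selfadjoint contraction. Writing $R_z=(I-zF)^{-1}$, the resolvent identity $zR_z-\bar wR_{\bar w}=(z-\bar w)R_zR_{\bar w}$ gives $N_\Omega(z,w)=CR_zR_{\bar w}C^*$. Introducing $G(z)=\begin{bmatrix}I\\ zR_zC^*\end{bmatrix}\colon\sM\to\sM\oplus\cK$ and using the system relation $TG(z)=\begin{bmatrix}\Omega(z)\\ R_zC^*\end{bmatrix}$, a short expansion of $G(w)^*(I-T^2)G(z)=G(w)^*G(z)-(TG(w))^*(TG(z))$ yields the identity $K(z,w)=(D_TG(w))^*(D_TG(z))$, where $D_T=(I-T^2)^{1/2}$ since $T=T^*$. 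Thus $K$ is a nonnegative kernel wherever $z-\bar w\ne0$, in particular on $\dC_+$ and on $\dC_-$, which is (iii). The implication (iii)$\Rightarrow$(ii) is the diagonal restriction $z=w$: since $\frac{1-|z|^2}{z-\bar z}(\Omega(z)-\Omega^*(z))=(1-|z|^2)\frac{\IM\Omega(z)}{\IM z}$, the operator $K(z,z)$ is exactly the left-hand side of \eqref{ythfd1}. For (ii)$\Rightarrow$(i) I would let $z=x+i\varepsilon\to x\in(-1,1)$; holomorphy on the domain gives $\Omega(z)\to\Omega(x)=\Omega^*(x)$ and $\frac{\IM\Omega(z)}{\IM z}\to\Omega'(x)\ge0$, so \eqref{ythfd1} passes in the limit to $I-\Omega(x)^2\ge(1-x^2)\Omega'(x)\ge0$, whence $\Omega(x)^2\le I$, i.e. $-I\le\Omega(x)\le I$.

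To bring in (iv) I would first record that a direct computation gives $zI-\Upsilon(z)=(1-z^2)\Omega(z)(I-z\Omega(z))^{-1}$ and $I-z\Upsilon(z)=(1-z^2)(I-z\Omega(z))^{-1}$, so that ${\bf\Phi}$ is an involution and $\Upsilon$ is well defined, holomorphic, and symmetric on the whole domain (the resolvents exist there because $\sigma(T)\subset[-1,1]$). The crucial step is the interchange identity
\[
(I-\bar w\,\Omega^*(w))\bigl(\Upsilon(z)-\Upsilon^*(w)\bigr)(I-z\Omega(z))=(z-\bar w)\,K(z,w),
\]
which I would verify using $\Upsilon(z)(I-z\Omega(z))=zI-\Omega(z)$ and $(I-\bar w\Omega^*(w))\Upsilon^*(w)=\bar wI-\Omega^*(w)$ and collecting the $(1-\bar wz)$- and $(z-\bar w)$-terms. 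Setting $h(z)=(I-z\Omega(z))^{-1}$ this reads $N_\Upsilon(z,w)=h(w)^*K(z,w)h(z)$. Applying the same identity to $\Upsilon$ in place of $\Omega$ (legitimate since ${\bf\Phi}(\Upsilon)=\Omega$) and substituting $I-z\Upsilon(z)=(1-z^2)h(z)$ produces the dual relation $K_\Upsilon(z,w)=g(w)^*N_\Omega(z,w)g(z)$ with $g(z)=(1-z^2)h(z)$.

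These two congruences make (i)$\Leftrightarrow$(iv) immediate, because congruence by an operator multiplier preserves nonnegativity of a kernel. If $\Omega\in\cRS(\sM)$, then $K\ge0$ by (iii) and $N_\Omega\ge0$ since $\Omega$ is Nevanlinna; the first congruence shows $N_\Upsilon\ge0$ (so $\Upsilon$ is Nevanlinna) and the second shows $K_\Upsilon\ge0$, whence $\Upsilon\in\cRS(\sM)$ by (iii)$\Rightarrow$(i). Conversely, if $\Upsilon\in\cRS(\sM)$ the inverse relation $K(z,w)=(I-\bar w\Omega^*(w))\,N_\Upsilon(z,w)\,(I-z\Omega(z))$ transfers nonnegativity back, giving $K\ge0$, and with $\Omega$ Nevanlinna (a standing hypothesis) (iii)$\Rightarrow$(i) yields $\Omega\in\cRS(\sM)$. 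I expect the main obstacle to be the interchange identity: the noncommutative bookkeeping in the cancellation must be handled carefully, and one must track the domains on which $h(z)$ and $(1-z^2)^{-1}(I-z\Omega(z))$ are invertible so that every resolvent appearing on $\dC_+$ and on $\dC_-$ is legitimate.
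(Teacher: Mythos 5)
Your proposal is correct: the factorization $K(z,w)=(D_TG(w))^*(D_TG(z))$, the interchange identity, and its dual all check out, and the logic of the cycle plus the attachment of (iv) is sound. However, it takes a genuinely different route from the paper, so a comparison is in order. For (i)$\Rightarrow$(iii) the paper does not work with the unparametrized block operator $T$ at all: it substitutes the parametrization \eqref{faul} ($C=KD_F$, $D=-KFK^*+D_{K^*}YD_{K^*}$), writes $\Omega(z)=D_{K^*}YD_{K^*}+K\Delta_F(z)K^*$ through the Sz.-Nagy--Foias characteristic function $\Delta_F$, and invokes the classical identities for $I-\Delta_F^*(w)\Delta_F(z)$ and $\Delta_F(z)-\Delta_F^*(w)$ to arrive at the sum-of-squares formula \eqref{zlhjk}; your defect-operator factorization with $G(z)\colon\sM\to\sM\oplus\cK$, $G(z)h=(h,\,zR_zC^*h)$, reaches the same conclusion with far less machinery, though both arguments lean on the same external fact that every $\Omega\in\cRS(\sM)$ is the transfer function of a passive selfadjoint system. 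For the remaining implications the paper's cycle is (i)$\Rightarrow$(ii)$\Rightarrow$(iv)$\Rightarrow$(i): the step (ii)$\Rightarrow$(iv) is carried by the two diagonal identities $\IM\Upsilon(z)=(I-\bar z\Omega^*(z))^{-1}\bigl[\IM z\,(I-\Omega^*(z)\Omega(z))-(1-|z|^2)\IM\Omega(z)\bigr](I-z\Omega(z))^{-1}$ and $I-\Upsilon^2(x)=(1-x^2)(I-x\Omega(x))^{-1}(I-\Omega^2(x))(I-x\Omega(x))^{-1}$, which are precisely the restrictions to $z=w$ of your two congruence identities, and (iv)$\Rightarrow$(i) is the same involution trick you use. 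By contrast, you prove (ii)$\Rightarrow$(i) directly by the boundary limit $z\to x\in(-1,1)$, which makes the equivalence of (i)--(iii) self-contained and independent of the transform ${\bf\Phi}$, and you then attach (iv) by transporting kernel nonnegativity in both directions through the off-diagonal congruences $N_\Upsilon=h(w)^*K\,h(z)$ and $K_\Upsilon=g(w)^*N_\Omega\, g(z)$. What each buys: the paper's diagonal computations are shorter and, importantly, establish the invertibility of $I-z\Omega(z)$ directly from (ii) (via $1/z\in\rho(\Omega(z))$ for nonreal $z$ and contractivity of the selfadjoint $\Omega(x)$ on $(-1,1)$), with no realization needed at that stage; your congruences are stronger statements and make the architecture more modular. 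The one loose point in your write-up is the parenthetical claim that $\Upsilon$ is well defined ``because $\sigma(T)\subset[-1,1]$'': that presupposes a realization, hence (i). It is harmless, since in the direction (i)$\Rightarrow$(iv) a realization is available and \eqref{sch-fr} yields the invertibility, while in (iv)$\Rightarrow$(i) well-definedness is part of the hypothesis; still, the paper's realization-free argument for invertibility is the cleaner way to justify it.
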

\begin{proof} (i)$\Longrightarrow$(ii) and (i)$\Longrightarrow$(iii). Assume that $\Omega\in
\cRS(\sM)$ and let $\Omega$ be represented as the the transfer
function of a passive selfadjoint system
$\tau=\left\{T;\sM,\sM,\cK\right\}$ as in \eqref{Tsystem} with the
selfadjoint contraction $T$ as in \eqref{faul}. According to
\eqref{dshazom} we have
\[\Omega(z)= D_{K^*}YD_{K^*}+K\Delta_F(z)K^*,\; z\in \dC\setminus\{(-\infty,-1]\cup[1,+\infty)\}.
\]
Taking into account that, see \cite[Chapter VI]{SF},
\[
((I-\Delta^*_F(w)\Delta_F(z))\f,\psi)=(1-\bar w z)((I-z F)^{-1}D_F\f, (I-w F)^{-1}D_F\psi)%
\]
and
\[
((\Delta_F(z)-\Delta^*_F(w))\f,\psi)=(z-\bar w)((I-zF)^{-1}D_F\f,(I-wF)^{-1}D_F\psi),
\]
we obtain
\[
\begin{array}{ll}
 ||h||^2-||\Omega(z) h||^2 & =||K^* h||^2-||\Delta_F(z) K^*h||^2\\
 &\quad
 +||D_Y D_{K^*}h||^2+||(K^*YD_{K^*}-D_{K}\Delta_F(z)K^*)h||^2 \\[3mm]
 &=(1-|z|^2)||(I-zF)^{-1}D_F K^*h||^2+||D_Y D_{K^*}h||^2\\
 &\quad +||(K^*YD_{K^*}-D_{K}\Delta_F(z)K^*)h||^2.
\end{array}
\]
Moreover,
\[
 \IM(\Omega(z)h,h)=\IM z||(I-zF)^{-1}D_FK^*h||^2
\]
and
\begin{multline*}
\IM z(||h||^2-||\Omega(z) h||^2)-(1-|z|^2)\IM (\Omega (z)h,h)\\
=\IM z\left(||D_Y
D_{K^*}h||^2+||(K^*YD_{K^*}-D_{K}\Delta_F(z)K^*)h||^2\right).
\end{multline*}
Similarly,
\begin{multline}\label{zlhjk}
(K(z,w)f,g)=((I-\Omega^*(w)\Omega(z))f,g)-\cfrac{1-\bar w z}{z-\bar w}\,((\Omega(z)-\Omega^*(w))f,g)\\
=(D^2_{Y}D_{K^*}f,D_{K^*}g)+\left((D_K\Delta_F(z)K^*-K^*YD_{K^*})f,(D_K\Delta_F(w)K^*-K^*YD_{K^*})g\right).
\end{multline}
It follows from \eqref{zlhjk} that for arbitrary complex numbers
$\{z_k\}_{k=1}^m\subset\dC\setminus\{(-\infty,-1]\cup
[1,+\infty)\}$, $\IM z_k>0$, $k=1,\dots,n$ or
$\{z_k\}_{k=1}^m\subset\dC\setminus\{(-\infty,-1]\cup
[1,+\infty)\}$, $\IM z_k<0$, $k=1,\dots,n$ and for arbitrary vectors
$\{f_k\}_{k=1}^\infty\subset\sM$ the relation
\[
\sum\limits_{k=1}^n(K(z_k,z_m)f_k,f_m)=\left\|D_{Y}D_{K^*}\sum_{k=1}^\infty f_k\right\|^2+
\left\|\sum_{k=1}^\infty (D_{K}\Delta_F(z_k)K^*-K^*YD_{K^*})f_k\right\|^2
\]
holds.
Therefore $K(z,w)$ is a nonnegative kernel.

(iii)$\Longrightarrow$(ii) is evident.

(ii)$\Longrightarrow$(iv) Because $\IM z >0$ ($\IM z<0$)
$\Longrightarrow $ $\IM\Omega(z)\ge 0$ ($\IM \Omega (z)\le 0)$, the
inclusion $1/z\in\rho(\Omega(z))$ is valid for $z$ with $\IM z\ne
0$. In addition $1/x\in\rho(\Omega(x))$ for $x\in(-1,1),$ $x\ne 0$,
because $\Omega(x)$ is a contraction. Hence $\Upsilon(z)$ is well
defined on $\sM$  and $\Upsilon^*(z)=\Upsilon(\bar z)$ for all
$z\in\dC\setminus\{(-\infty,-1]\cup[1,+\infty)\}.$ Furthermore, with
$\IM z\ne 0$ one has
\[
 \IM\Upsilon(z)
 =\left(I-\bar z\Omega^*(z)\right)^{-1}\left[\IM z(I-\Omega^*(z)\Omega(z))-(1-|z|^2)\IM\Omega(z)\right]\left(I- z\Omega(z)\right)^{-1},
\]
while for $x\in(-1,1)$
\[
  I-\Upsilon^2(x)=(1-x^2)\left(I-x\Omega(x)\right)^{-1}(I-\Omega^2(x))\left(I-x\Omega(x)\right)^{-1}.
\]
Thus, $\Upsilon\in\cRS(\sM)$.

(iv)$\Longrightarrow$(i) It is easy to check that if $\Upsilon$ is
given by \eqref{formula3}, then
\[
\Omega(z)=\left(zI-\Upsilon(z)\right)\left(I-z\Upsilon(z)\right)^{-1},\;
z\in\dC\setminus\{(-\infty,-1]\cup[1,+\infty)\}.
\]
Hence, this implication reduces back to the proven implication
(i)$\Longrightarrow$(ii).
\end{proof}

\begin{remark}
\label{einmal}
1) Inequality \eqref{ythfd1} can be rewritten as follows
\[
\left((I-\Omega^*(z)\Omega(z))f,f\right)-\cfrac{1-|z|^2}{|\IM z|}\left|{\IM (\Omega (z)f,f)}\right|\ge
 0,\quad \IM z\ne 0,\; f\in\sM.
\]
Let $\beta\in [0,\pi/2]$. Taking into account that
$$|z\sin\beta\pm i\cos\beta|^2=1\Longleftrightarrow 1-|z|^2=\pm 2\cot\beta\,\IM z$$
one obtains, see \eqref {propert111},
\[
\begin{array}{l}\left\{\begin{array}{l}
|z\sin\beta+i\cos\beta|= 1\\
\quad z\ne\pm 1
\end{array}\right.
\Longrightarrow \left\|\Omega(z)\sin\beta+i\cos\beta\,I\right\|\le 1\\
\left\{\begin{array}{l} |z\sin\beta-i\cos\beta|=1\\
\quad z\ne\pm 1
\end{array}\right.
\Longrightarrow\left\|\Omega(z)\sin\beta-i\cos\beta\,I\right\|\le 1
\end{array}.
\]
2) Inequality \eqref{ythfd1} implies
\[
 I-\Omega^*(x)\Omega(x)-(1-x^2)\Omega'(x)\ge 0, \quad x\in (-1,1).
\]
3) Formula \eqref{forminner} implies that if $\Omega\in\cRS(\sM)$ is
an inner function, then
\[
I-\Omega^*(w)\Omega(z)-\cfrac{1-\bar w z}{z-\bar w}\,\left(\Omega(z)-\Omega^*(w)\right)=0,\; z\ne\bar w.
\]
In particular,
\[
\begin{array}{l}
\cfrac{\Omega(z)-\Omega(0)}{z}=I-\Omega(0)\Omega(z),\quad
z\in\dC\setminus\{-\infty,-1]\cup[1,+\infty)\},\;z\ne
0,\\[4mm]
\Omega'(0)=I-\Omega(0)^2.
\end{array}
\]
This combined with \eqref{mobinn2} yields $\Lambda(z)=z
I_{\sD_{\Omega(0)}}$ in the representation $\eqref{mobinn}$ for an
inner function $\Omega\in\cRS(\sM)$.
\end{remark}

\section{Compressed resolvents and the class $\mathbf N_{\mathfrak M}^0[-1,1]$}
\begin{definition}
\label{Nev} Let $\sM$ be a Hilbert space. A $\bB(\sM)$-valued Nevanlinna function $M$ is said to
belong to the class $\bN_{\sM}^0[-1,1]$
 if it is holomorphic outside the interval $[-1,1]$  and
\[\lim_{\xi\to \infty}\xi M(\xi)=-I_\sM.
\]
\end{definition}

It follows from \cite{AHS2} that $M\in {\mathbf
N}^{0}_{\sM}[-1,1]$ if and only if there exist a Hilbert space $\sH$
containing $\sM$ as a subspace and a selfadjoint contraction $T$ in
$\sH$ such that $T$ is $\sM$-simple and
\[
 M(\xi)=P_\sM (T-\xi I)^{-1}\uphar\sM, \quad \xi\in \dC\setminus
 [-1,1].
\]
Moreover, formula \eqref{sch-fr} implies the following connections
between the classes ${\mathbf N}_{\sM}^0[-1,1]$ and $\cRS(\sM)$ (see
also \cite{AHS2, AHS3}):
\begin{equation}
\label{conmezhd}
\begin{array}{rl}
  M(\xi)\in \bN_{\sM}^0[-1,1] &\Longrightarrow\Omega(z):=
  M^{-1}(1/z)+1/z\in\cRS(\sM),\\[4mm]
 \Omega(z)\in \cRS(\sM)          &\Longrightarrow M(\xi):=\left(\Omega(1/\xi)-\xi\right)^{-1}\in \bN_{\sM}^0[-1,1].
\end{array}
\end{equation}
Let $\Omega(z)=(z I+D)(I+zD)^{-1}$ be an inner function from the class $\cRS(\sM),$ then by \eqref{conmezhd}
\[
 \Omega(z)=(z I+D)(I+zD)^{-1}\Longrightarrow M(\xi)=\cfrac{\xi I+D}{1-\xi^2},\quad \xi\in\dC\setminus [-1,1].
\]
The identity $\Omega(z)^*\Omega(z)=I_\sM$ for $z\in\dT\setminus\{\pm
1\}$ is equivalent to
$$2\RE(\xi M(\xi))=-I_\sM,\quad \xi\in\dT\setminus\{\pm 1\}.$$

The next statement is established in \cite{Arl_arxiv_2017}. Here we give another proof.
\begin{theorem}
\label{ghtj} If $M(\xi)\in \bN_{\sM}^0[-1,1]$, then the function
\[
\cfrac{M^{-1}(\xi)}{\xi^2-1},\quad \xi\in \dC\setminus[-1,1],
\]
belongs to $\bN_{\sM}^0[-1,1]$ as well.
\end{theorem}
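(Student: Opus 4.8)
The plan is to pass through the correspondence \eqref{conmezhd} between the classes $\bN^0_\sM[-1,1]$ and $\cRS(\sM)$ and to realize the transformation $M\mapsto M^{-1}/(\xi^2-1)$ as the automorphism ${\bf\Phi}$ of $\cRS(\sM)$ introduced in \eqref{TREIN}, read through that dictionary. Concretely, starting from $M\in\bN^0_\sM[-1,1]$, the first implication in \eqref{conmezhd} produces $\Omega(z):=M^{-1}(1/z)+1/z\in\cRS(\sM)$. I would then apply ${\bf\Phi}$ (equivalently, the implication (i)$\Rightarrow$(iv) in Theorem \ref{newchar}) to obtain $\Upsilon(z):=(zI-\Omega(z))(I-z\Omega(z))^{-1}\in\cRS(\sM)$, and finally invoke the second implication in \eqref{conmezhd} to return to the Nevanlinna side: the function $\tilde M(\xi):=(\Upsilon(1/\xi)-\xi)^{-1}$ then automatically belongs to $\bN^0_\sM[-1,1]$. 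The whole argument thereby reduces to verifying that $\tilde M(\xi)$ is exactly $M^{-1}(\xi)/(\xi^2-1)$.

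The verification is a short computation. Writing $z=1/\xi$ one has $\Omega(1/\xi)=M^{-1}(\xi)+\xi I$, so the denominator in $\Upsilon(1/\xi)$ collapses, $I-(1/\xi)\Omega(1/\xi)=-(1/\xi)M^{-1}(\xi)$, whose inverse is $-\xi M(\xi)$; the numerator is $(1/\xi-\xi)I-M^{-1}(\xi)$. Multiplying out gives
\[
\Upsilon(1/\xi)=\bigl((1/\xi-\xi)I-M^{-1}(\xi)\bigr)(-\xi M(\xi))=(\xi^2-1)M(\xi)+\xi I,
\]
so that $\Upsilon(1/\xi)-\xi I=(\xi^2-1)M(\xi)$ and hence $\tilde M(\xi)=\bigl((\xi^2-1)M(\xi)\bigr)^{-1}=M^{-1}(\xi)/(\xi^2-1)$, as claimed.

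The only points needing care are bookkeeping ones rather than genuine obstacles. First, all inverses appearing along the way are legitimate: for $\xi\in\dC\setminus[-1,1]$ the operator $M(\xi)$ is boundedly invertible (this is built into the class $\bN^0_\sM[-1,1]$ and is precisely what makes the transforms in \eqref{conmezhd} well defined), while the map ${\bf\Phi}$ is well defined on all of $\cRS(\sM)$ by Theorem \ref{newchar}. Second, one should match the punctured domains under the substitution $z=1/\xi$: for $\xi\in\dC\setminus[-1,1]$ the point $z=1/\xi$ lies in $\dC\setminus\{(-\infty,-1]\cup[1,+\infty)\}$, the common domain of $\Omega$ and $\Upsilon$. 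Granting these, the membership $\tilde M\in\bN^0_\sM[-1,1]$ is inherited directly from $\Upsilon\in\cRS(\sM)$ via \eqref{conmezhd}, and the displayed computation identifies $\tilde M$ with $M^{-1}(\cdot)/(\cdot^2-1)$. I expect no serious difficulty; the conceptual content is the recognition that the asserted transformation is nothing but ${\bf\Phi}$ transported to the Nevanlinna side.
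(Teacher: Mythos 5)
Your proposal is correct and follows essentially the same route as the paper's proof: pass to $\Omega\in\cRS(\sM)$ via \eqref{conmezhd}, apply ${\bf\Phi}$ (Theorem \ref{newchar}), return via \eqref{conmezhd}, and identify $(\Upsilon(1/\xi)-\xi)^{-1}$ with $M^{-1}(\xi)/(\xi^2-1)$. The only cosmetic difference is that the paper reaches this identity through the relation $I-z\Upsilon(z)=(1-z^2)(I-z\Omega(z))^{-1}$, whereas you compute $\Upsilon(1/\xi)$ directly; both are the same "simple calculation."
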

\begin{proof}
Let $M(\xi)\in \bN_{\sM}^0[-1,1]$. Then due to \eqref{conmezhd}
 the function $\Omega(z)= M^{-1}(1/z)+1/z$ belongs to $\cRS(\sM)$.
By Theorem \ref{newchar} the function
\[
\Upsilon(z)=\left(zI-\Omega(z)\right)\left(I-z\Omega(z)\right)^{-1},\quad 
 z\in\dC\setminus\{(-\infty,-1]\cup[1,+\infty)\}
\]
belongs to $\cRS(\sM).$ From the equality
\[
I-z\Upsilon(z)=(1-z^2)\left(I-z\Omega(z)\right)^{-1},\quad
z\in\dC\setminus\{(-\infty,-1]\cup[1,+\infty)\}
\]
we get
\[
\left(I-z\Upsilon(z)\right)^{-1}=\cfrac{I-z\Omega(z)}{1-z^2}.
\]
Simple calculations give
\[
\left(\Upsilon(1/\xi)-\xi\right)^{-1}=\cfrac{M^{-1}(\xi)}{\xi^2-1},\quad
\xi\in\dC\setminus[-1,1].
\]
Now in view of \eqref{conmezhd} the function
$\cfrac{M^{-1}(\xi)}{\xi^2-1}$ belongs to $\bN_{\sM}^0[-1,1]$.
\end{proof}

\section{Transformations of the classes $\mathcal R\mathcal S(\mathfrak M)$ and
$\mathbf N_{\mathfrak M}^0[-1,1]$}\label{sec6}

We start by studying transformations of the class $\mathcal R\mathcal S(\mathfrak M)$ given
by \eqref{TREIN}, \eqref{mappings2}:
\[
\cR\cS(\sM)\ni\Omega\mapsto{\bf\Phi}(\Omega)=\Omega_{\bf\Phi}(z):=(zI-\Omega(z))(I-z\Omega(z))^{-1},
\]
\[
\cR\cS(\sM)\ni\Omega\mapsto{\bf\Xi}_a(\Omega)=\Omega_a
(z):=\Omega\left(\cfrac{z+a}{1+za}\right),\quad a\in(-1,1),
\]
and the transform
\begin{equation}\label{TRZWEI}
\cR\cS(H)\ni\Omega\mapsto{\bf \Pi}(\Omega)=\Omega_{\bf\Pi}
(z): K_{11}+K_{12}\Omega(z)(I-K_{22}\Omega(z))^{-1}K^*_{12},
\end{equation}
which is determined by the selfadjoint contraction $K$ of the form
\[
{\bf K}=\begin{bmatrix}
K_{11}&K_{12}\cr K^*_{12}& K_{22}\end{bmatrix}:
\begin{array}{l}\sM\\\oplus\\ H\end{array}\to \begin{array}{l}\sM\\\oplus\\
H\end{array};
\]
in all these transforms
$z\in\dC\setminus\{(-\infty,-1]\cup[1,+\infty)\}$.

A particular case of \eqref{TRZWEI} is the transformation ${\bf
\Pi}_a$ determined by the block operator
\[
{\bf K}_a=\begin{bmatrix}aI &\sqrt{1-a^2}I\cr \sqrt{1-a^2}&- a I
\end{bmatrix}:\begin{array}{l}\sM\\\oplus\\ \sM\end{array}\to
\begin{array}{l}\sM\\\oplus\\ \sM\end{array}, \quad a\in(-1,1),
\]
i.e., see \eqref{mappings2},
\[
\cR\cS(\sM)\ni\Omega(z)\mapsto\wh\Omega_a(z):=(aI+\Omega(z))(I+a\Omega(z))^{-1}.
\]
By Theorem \ref{newchar} the mapping ${\bf\Phi}$ given by
\eqref{TREIN} is an automorphism of the class $\cR\cS(\sM)$,
${\bf\Phi}^{-1}={\bf\Phi}.$ The equality \eqref{forminner} shows
that the set of all inner functions of the class $\cRS(\sM)$ is the
image of all constant functions under the transformation ${\bf
\Phi}$. In addition, for $a,b\in(-1,1)$ the following identities
hold:
\[
{\bf \Pi}_b\circ{\bf \Pi}_a= {\bf \Pi}_a\circ{\bf \Pi}_b={\bf
\Pi}_c,\quad {\bf \Xi}_b\circ{\bf \Xi}_a= {\bf \Xi}_a\circ{\bf
\Xi}_b={\bf \Xi}_c,\; \mbox{ where }\; c={\cfrac{a+b}{1+ab}}.
\]
The mapping ${\bf \Gamma}$ on the class $\bN_{\sM}^0[-1,1]$ (see
Theorem \ref{ghtj}) defined by
\begin{equation}\label{GTR1}
\bN_{\sM}^0[-1,1]\ni M(\xi)\stackrel{{{\bf \Gamma}}}{\mapsto}M_{{\bf
\Gamma}}(\xi):=\cfrac{M^{-1}(\xi)}{\xi^2-1}\in \bN_{\sM}^0[-1,1]
\end{equation}
has been studied recently in \cite{Arl_arxiv_2017}. It is obvious
that ${{\bf \Gamma}}^{-1}={{\bf \Gamma}}$.

Using the relations \eqref{conmezhd} we define the transform ${\bf
U}$ and its inverse ${\bf U}^{-1}$ which connect the classes
$\cR\cS(\sM)$ and $\bN_{\sM}^0[-1,1]$:
\begin{equation}\label{UTR1}
\cR\cS(\sM)\ni\Omega(z)\stackrel{{{\bf
U}}}{\mapsto}M(\xi):=\left(\Omega(1/\xi)-\xi\right)^{-1}\in
\bN_{\sM}^0[-1,1], \quad \xi\in\dC\setminus [-1,1].
\end{equation}
\begin{equation}\label{UTR11}
\bN_{\sM}^0[-1,1]\ni M(\xi)\stackrel{{{\bf U}^{-1}}}{\mapsto}\Omega(z):= M^{-1}(1/z)+1/z\in\cRS(\sM),
\end{equation}
where $z\in\dC\setminus\{(-\infty,-1]\cup [1,+\infty)\}.$ The proof
of Theorem \ref{ghtj} contains the following commutation relations
\begin{equation} \label{comrel}
{\bf U}{{\bf\Phi}}={\bf \Gamma}{\bf U},\quad {\bf\Phi}{\bf
U}^{-1}={\bf U}^{-1}{{\bf\Gamma}}.
\end{equation}
One of the main aims in this section is to solve the following
realization problem concerning the above transforms: given a passive
selfadjoint system $\tau=\{T;\sM,\sM,\cK\}$ with the transfer
function $\Omega$, construct a passive selfadjoint systems whose
transfer function coincides with $\Phi(\Omega)$,
${\bf\Xi}_a(\Omega)$, ${\bf \Pi}(\Omega)$, and ${\bf\Pi}_a(\Omega)$,
respectively. We will also determine the fixed points of all the
mappings ${\bf\Phi}$, ${\bf\Gamma}$, ${\bf\Xi}_a$, and ${\bf\Pi}_a$.

\subsection{The mappings ${\bf \Phi}$ and ${\bf\Gamma}$ and inner dilations of the functions from $\mathcal R\mathcal S(\mathfrak M)$}

\begin{theorem}
\label{neueth1}

\begin{enumerate}
\item Let
$\tau=\left\{T;\sM,\sM,\cK\right\}$ be a passive selfadjoint system
and let $\Omega$ be its transfer function. Define
\begin{equation}
\label{wollen}
T_{\bf\Phi}:=\begin{bmatrix}-P_\sM T\uphar\sM&P_\sM D_T\cr D_T\uphar\sM &T \end{bmatrix}:\begin{array}{l}\sM\\\oplus\\\sD_T\end{array}\to\begin{array}{l}\sM\\\oplus\\\sD_T\end{array}.
\end{equation}
Then $T_{\bf\Phi}$ is a selfadjoint contraction and
$\Omega_{\bf\Phi}(z)=(zI-\Omega(z))(I-z\Omega(z))^{-1}$ is the
transfer function of the passive selfadjoint system of the form
\[
\tau_{\bf\Phi}=\left\{T_{\bf\Phi}; \sM,\sM, \sD_T\right\}.
\]
Moreover, if the system $\tau$ is minimal, then the system
$\tau_{\bf\Phi}$ is minimal, too.

\item Let $T$ be a selfadjoint contraction in $\sH$, let $\sM$ be a subspace of $\sH$ and let
\begin{equation}\label{wollen1}
M(\xi)=P_\sM(T-\xi I)^{-1}\uphar\sM.
\end{equation}
Consider a Hilbert space $\wh \sH:=\sM\oplus\sH$ and let $\wh P_\sM$
be the orthogonal projection in $\wh \sH$ onto $\sM$. Then
\[
\cfrac{M^{-1}(\xi)}{\xi^2-1}=\wh P_\sM(T_{\bf\Phi}-\xi I)^{-1}\uphar\sM,
\]
where $T_{\bf\Phi}$ is defined by \eqref{wollen}.

\item The function
\[
\wt\Omega(z)=(zI-T_{\bf\Phi})(I-z T_{\bf\Phi})^{-1}, \quad
 z\in\dC\setminus\{(-\infty,-1]\cup[1,+\infty)\}
\]
satisfies
\[
\Omega(z)=P_\sM\wt\Omega(z)\uphar\sM.
\]
\end{enumerate}
\end{theorem}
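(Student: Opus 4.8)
The plan is to recognise $T_{\bf\Phi}$ as a compression of a fundamental symmetry. Since $T=T^*$, the defect operator $D_T=(I-T^2)^{1/2}$ commutes with $T$ by \eqref{commut}; hence $\sD_T=\cran D_T$ reduces $T$ and $T\uphar\sD_T$ is a selfadjoint contraction. The block operator
\[
 J=\begin{bmatrix} -T & D_T\cr D_T & T\end{bmatrix}\colon\sH\oplus\sH\to\sH\oplus\sH
\]
satisfies $J=J^*$ and $J^2=I$, the latter because $T^2+D_T^2=I$ and $TD_T=D_TT$; thus $J$ is a selfadjoint contraction. Comparing blocks and using $\ran D_T\subseteq\sD_T$ together with the fact that $\sD_T$ reduces $T$, one sees that $T_{\bf\Phi}$ in \eqref{wollen} is exactly the compression of $J$ to the subspace $\sM\oplus\sD_T\subseteq\sH\oplus\sH$. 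As a compression of a selfadjoint contraction is again a selfadjoint contraction, $T_{\bf\Phi}=T_{\bf\Phi}^*$ and $\|T_{\bf\Phi}\|\le 1$.

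The computational core is the resolvent identity
\[
 P_\sM(I-zT_{\bf\Phi})^{-1}\uphar\sM=\cfrac{1}{1-z^2}\left(I-z\Omega(z)\right),
\]
where $P_\sM$ denotes the orthogonal projection in $\sM\oplus\sD_T$ onto $\sM$. To obtain it I would apply the Schur--Frobenius formula of Appendix \ref{AppendixA} to the $(1,1)$-corner of $(I-zT_{\bf\Phi})^{-1}$; using $D_T^2=I-T^2$, the commutation of $D_T$ with $(I-zT)^{-1}$, and that $\sD_T$ reduces $T$, the corner becomes
\[
 \left[I+zP_\sM T\uphar\sM-z^2P_\sM(I-zT)^{-1}(I-T^2)\uphar\sM\right]^{-1},
\]
now with $T$ acting in $\sH$. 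The bracketed operator equals $(1-z^2)P_\sM(I-zT)^{-1}\uphar\sM$, which follows by compressing to $\sM$ the elementary identity $I+zT-z^2(I-zT)^{-1}(I-T^2)=(1-z^2)(I-zT)^{-1}$ on $\sH$ (verified at once by multiplying through by $I-zT$). Since \eqref{sch-fr} applied to $\tau$ gives $P_\sM(I-zT)^{-1}\uphar\sM=(I-z\Omega(z))^{-1}$, the displayed resolvent identity follows. Feeding it into \eqref{sch-fr} for $\tau_{\bf\Phi}$ yields $(I-z\Omega_{\tau_{\bf\Phi}}(z))^{-1}=\cfrac{1}{1-z^2}(I-z\Omega(z))$, and solving this relation for the transfer function gives $\Omega_{\tau_{\bf\Phi}}(z)=(zI-\Omega(z))(I-z\Omega(z))^{-1}=\Omega_{\bf\Phi}(z)$, the transfer-function assertion in (1).

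For minimality I would use that $\tau_{\bf\Phi}$ is a passive selfadjoint system with control operator $B_{\bf\Phi}=D_T\uphar\sM$ and main operator $F_{\bf\Phi}=T\uphar\sD_T$, the off-diagonal blocks of $T_{\bf\Phi}$ being mutually adjoint. By the equivalences for such systems recalled in the Introduction, minimality of $\tau_{\bf\Phi}$ is equivalent to $F_{\bf\Phi}$ being $\cran B_{\bf\Phi}$-simple, i.e.\ to $\cspan\{T^nD_T\sM:\,n\in\dN_0\}=\sD_T$. If $\tau$ is minimal then $T$ is $\sM$-simple, $\cspan\{T^n\sM:\,n\in\dN_0\}=\sH$; since $D_T$ is bounded and commutes with $T$,
\[
 \cspan\{T^nD_T\sM:\,n\in\dN_0\}=\overline{D_T\,\span\{T^n\sM:\,n\in\dN_0\}}=\overline{D_T\sH}=\cran D_T=\sD_T,
\]
which is the required identity; hence $\tau_{\bf\Phi}$ is minimal.

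Parts (2) and (3) are then consequences of the same resolvent identity. For (2), note that $\wh P_\sM$ restricts on $\sM\oplus\sD_T$ to the projection $P_\sM$ above; putting $z=1/\xi$ and using $(T_{\bf\Phi}-\xi I)^{-1}=-\xi^{-1}(I-\xi^{-1}T_{\bf\Phi})^{-1}$, a direct simplification turns $-\xi^{-1}\cdot\tfrac{1}{1-\xi^{-2}}(I-\xi^{-1}\Omega(1/\xi))$ into $\tfrac{1}{\xi^2-1}(\Omega(1/\xi)-\xi I)$, and the relation $\Omega(1/\xi)-\xi I=M^{-1}(\xi)$ from \eqref{conmezhd}---valid since $M(\xi)=P_\sM(T-\xi I)^{-1}\uphar\sM$ is the transform of the transfer function $\Omega$ of $\{T;\sM,\sM,\cK\}$---gives $\wh P_\sM(T_{\bf\Phi}-\xi I)^{-1}\uphar\sM=\tfrac{M^{-1}(\xi)}{\xi^2-1}$. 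For (3) the rearrangement $\wt\Omega(z)=\tfrac{1}{z}I-\tfrac{1-z^2}{z}(I-zT_{\bf\Phi})^{-1}$ combined with the resolvent identity gives $P_\sM\wt\Omega(z)\uphar\sM=\tfrac1z I-\tfrac1z(I-z\Omega(z))=\Omega(z)$. I expect the main obstacle to be not the transfer-function algebra, which is essentially forced by \eqref{sch-fr}, but the careful bookkeeping in the Schur-complement and reducing-subspace computation of the resolvent identity, together with the passage $\overline{D_T\sH}=\sD_T$ underlying minimality; the remaining manipulations are routine.
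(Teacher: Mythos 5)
Your proposal is correct, and it reaches the theorem by a route that is organized differently from the paper's. The paper proves that $T_{\bf\Phi}$ is a selfadjoint contraction by verifying the quadratic-form identity $\left((\varphi,f),(\varphi,f)\right)\pm\left((\varphi,f),T_{\bf\Phi}(\varphi,f)\right)=\|(I\mp T)^{1/2}\varphi\pm(I\pm T)^{1/2}f\|^2$ for $\varphi\in\sM$, $f\in\sD_T$, whereas you exhibit $T_{\bf\Phi}$ as the compression to $\sM\oplus\sD_T$ of the fundamental symmetry $J$ on $\sH\oplus\sH$; this is very much in the spirit of Remark \ref{herbst} in Appendix \ref{AppendixB} and is a clean alternative. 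For the transfer-function claim, the paper computes $\Omega_{\tau_{\bf\Phi}}$ directly from the blocks, $-P_\sM T\uphar\sM+zP_\sM D_T(I-zT)^{-1}D_T\uphar\sM=P_\sM(zI-T)(I-zT)^{-1}\uphar\sM$, and compares with \eqref{omgam}; you instead establish the compressed-resolvent identity $P_\sM(I-zT_{\bf\Phi})^{-1}\uphar\sM=\tfrac{1}{1-z^2}\left(I-z\Omega(z)\right)$ once, via Schur--Frobenius and the elementary identity $(I+zT)(I-zT)-z^2(I-T^2)=(1-z^2)I$, and then read off part (1) from \eqref{sch-fr}. The payoff of your organization shows in parts (2) and (3): the paper obtains (2) from the transform calculus (${\bf U}$, ${\bf\Gamma}$ and the commutation relations \eqref{comrel}, which rest on Theorem \ref{ghtj}) and obtains (3) using the involution property ${\bf\Phi}\circ{\bf\Phi}=\mathrm{id}$ coming from Theorem \ref{newchar}, while your single resolvent identity yields both statements by direct substitution ($z=1/\xi$ together with $M^{-1}(\xi)=\Omega(1/\xi)-\xi I$ for (2)), so your proof is self-contained modulo \eqref{sch-fr} and \eqref{conmezhd}. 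The minimality arguments are equivalent: both reduce to $\cspan\{T^nD_T\sM:\,n\in\dN_0\}=\sD_T$, the paper via $\sD_T\ominus\cspan\{T^nD_T\sM\}=\bigcap_{n}\ker(P_\sM T^nD_T)$ together with $\ker D_T\cap\sD_T=\{0\}$, you via commutation, boundedness of $D_T$, and density of ${\rm span}\{T^n\sM\}$ in $\sH$. One small point: in part (2) the paper's statement places $T_{\bf\Phi}$ on $\wh\sH=\sM\oplus\sH$ although \eqref{wollen} uses the state space $\sD_T$; your remark that $\wh P_\sM$ restricts to $P_\sM$ silently identifies the two readings, which is harmless here, since $D_T\sM\subseteq\sD_T$ and $\sD_T$ reduces $T$, so the compressed resolvent is the same whether the state space is $\sD_T$ or all of $\sH$.
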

\begin{proof}
(1) According to \eqref{sch-fr} one has
$$P_\sM(I-zT)^{-1}\uphar\sM=(I_\sM-z\Omega(z))^{-1}$$
 for $z\in\dC\setminus\{(-\infty,-1]\cup[1,+\infty)\}$.
Let
\[
\Omega_{\bf\Phi}(z)=(zI-\Omega(z))(I-z\Omega(z))^{-1}.
\]
Now simple calculations give
\begin{equation}
\label{omgam}
\Omega_{\bf\Phi}(z)=\left(z-\cfrac{1}{z}\right)\left(I-z\Omega(z)\right)^{-1}+\cfrac{I_\sM}{z}
=P_\sM(zI-T)(I-zT)^{-1}\uphar\sM.
\end{equation}
Observe that the subspace $\sD_T$ is invariant under $T$; cf.
\eqref{commut}. Let $\sH:=\sM\oplus\sD_T$ and let $T_{\bf\Phi}$ be
given by \eqref{wollen}. Since $T$ is a selfadjoint contraction in
$\sM\oplus\cK$, the inequalities
\[
\left(\begin{bmatrix}\varphi\cr f\end{bmatrix},
\begin{bmatrix}\varphi\cr f\end{bmatrix}\right)\pm
\left(\begin{bmatrix}\varphi\cr f\end{bmatrix},
T_{\bf\Phi}\begin{bmatrix}\varphi\cr f\end{bmatrix}\right)=
\left\|(I\mp T)^{1/2}\f\pm (I\pm T)^{1/2}f \right\|^2
\]
hold for all $\varphi\in \sM$ and $f\in\sD_T$. Therefore
$T_{\bf\Phi}$ is a selfadjoint contraction in the Hilbert space
$\sH$ and the system
\[
\tau_{\bf\Phi}=\left\{\begin{bmatrix}-P_\sM T\uphar\sM&P_\sM D_T\cr D_T\uphar\sM &T \end{bmatrix}; \sM,\sM, \sD_T\right\}
\]
is passive selfadjoint. Suppose that $\tau$ is minimal, i.e.,
$$\cspan\{T^n\sM,\;n\in\dN_0\}=\sM\oplus\cK\Longleftrightarrow\bigcap\limits_{n=0}^\infty\ker(P_\sM T^n)=\{0\}. $$
Since
\[
\sD_T\ominus\{\cspan\{T^nD_T\sM,\;n\in\dN_0\}\}=\bigcap\limits_{n=0}^\infty\ker(P_\sM T^nD_T),
\]
we get $\cspan\{T^nD_T\sM:\;n\in\dN_0\}=\sD_T.$ This means that the
system $\tau_\Gamma$ is minimal.

For the transfer function $\Upsilon(z)$ of $\tau_{\bf\Phi}$ we
get
\[
\begin{array}{ll}
\Upsilon(z)& =\; (-P_\sM T+zP_\sM D_T(I-z T)^{-1}D_T)\uphar\sM\\[3mm]
 &=\; P_\sM\left(-T+zD^{2}_T(I-zT)^{-1}\right)\uphar\sM\\[3mm]
 &=\; P_\sM(zI-T)(I-zT)^{-1}\uphar\sM,
\end{array}
\]
with $z\in\dC\setminus\{(-\infty,-1]\cup[1,+\infty)\}$. Comparison
with \eqref{omgam} completes the proof.

(2) The function $M(\xi)=P_\sM(T-\xi I)^{-1}\uphar\sM$ belongs to
the class $\bN_{\sM}^0[-1,1].$ Consequently, $\Omega(z):=
M^{-1}(1/z)+1/z\in\cRS(\sM)$. The function $\Omega$ is the transfer
function of the passive selfadjoint system
$$\tau=\left\{T;\sM,\sM,\cK\right\},$$
where $\cK=\sH\ominus\sM$. Let $\Upsilon={\bf \Phi}(\Omega)$ and
$\wh M={\bf U}(\Upsilon)$. From \eqref{GTR1}--\eqref{comrel} it
follows that
$$\wh M(\xi)=\cfrac{M^{-1}(\xi)}{\xi^2-1},\quad \xi\in\dC\setminus[-1,1].$$
As was shown above, the function $\Upsilon$ is the transfer function
of the passive selfadjoint system
\[
\tau_{\bf\Phi}=\left\{T_{\bf\Phi}; \sM,\sM, \sH\right\},
\]
where $T_{\bf\Phi}$ is given by \eqref{wollen}. Then again the
Schur-Frobenius formula \eqref{sch-fr} gives
$$\wh M(\xi)=\wh P_\sM(T_{\bf\Phi}-\xi I)^{-1}\uphar\sM,\quad \xi\in\dC\setminus[-1,1].$$

(3) For all $z\in\dC\setminus\{(-\infty,-1]\cup[1,+\infty)\}$ one
has
\[
\wt\Omega(z)=\left(z-\cfrac{1}{z}\right)(I-zT_{\bf\Phi})^{-1}+\cfrac{1}{z} \,I.
\]
Then
\[
\begin{split}
P_\sM\wt\Omega(z)\uphar\sM
&=\left(z-\cfrac{1}{z}\right)(I_\sM-z\Upsilon(z))^{-1}+\cfrac{1}{z} \,I_\sM\\
&=(zI_\sM-\Upsilon(z))(I_\sM-z\Upsilon(z))^{-1}=\Omega(z).
\end{split}
\]
This completes the proof.
\end{proof}

Notice that if $\Omega(z)\equiv const=D$, then $\Upsilon
(z)=(zI-D)(I-zD)^{-1}$,
$z\in\dC\setminus\{(-\infty,-1]\cup[1,+\infty)\}$. This is the
transfer function of the conservative and selfadjoint system
\[
\Sigma=\left\{\begin{bmatrix}-D&D_D\cr D_D&D\
\end{bmatrix},\sM,\sM,\sD_D\right\}.
\]
\begin{remark}
\label{dcgjv} The block operator $T_{\bf\Phi}$ of the form
\eqref{wollen} appeared in \cite{Arl_arxiv_2017} and relation
\eqref{wollen1} is also established in \cite{Arl_arxiv_2017}.
\end{remark}

\begin{theorem} \label{inerdil}
1) Let $\sM$ be a Hilbert space and let $\Omega\in\cRS(\sM)$. Then there exist a Hilbert space ${\wt\sM}$ containing $\sM$ as a subspace and a selfadjoint contraction $\wt A$ in $\wt \sM$ such that for all $z\in\dC\setminus\{(-\infty,-1]\cup[1,+\infty)\}$ the equality
\begin{equation}\label{ajhvelbk}
\Omega(z)=P_\sM(z I_{\wt\sM}+\wt A)(I_{\wt\sM}+z\wt A)^{-1}\uphar\sM
\end{equation}
holds. Moreover, the pair $\{\wt\sM,\wt A\}$ can be chosen such that
$\wt A$ is $\sM$-simple, i.e.,
\begin{equation}
\label{minim11}\cspan\{\wt A^n\sM:\;n\in\dN_0\}=\wt\sM.
\end{equation}
The function $\Omega$ is inner if and only if $\wt\sM=\sM$ in the
representation \eqref{minim11}.

If there are two representations of the form \eqref{ajhvelbk} with pairs $\{\wt \sM_1, \wt A_1\}$ and $\{\wt \sM_2, \wt A_2\}$  that are $\sM$-simple, then
there exists a unitary operator $\wt U\in\bB(\wt\sM_1,\wt\sM_2)$ such that
\begin{equation} \label{eybntrd}
\wt U\uphar\sM=I_\sM,\quad \wt A_2\wt U=\wt U\wt A_1.
\end{equation}

2) The formula
\begin{equation}\label{yjdbyn}
\Omega(z)=\int\limits_{-1}^{1}\cfrac{z+t}{1+zt}\,d\sigma(t),\quad
z\in\dC\setminus\{(-\infty,-1]\cup[1,+\infty)\},
\end{equation}
gives a one-one correspondence between functions $\Omega$ from the class $\cRS(\sM)$ and nondecreasing left-continuous $\bB(\sM)$-valued functions $\sigma$ on $[-1,1]$ with $\sigma(-1)=0,$ $\sigma(1)=I_\sM$.
\end{theorem}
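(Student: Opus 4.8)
The plan is to obtain the dilation \eqref{ajhvelbk} almost directly from Theorem \ref{neueth1}, and then to dispatch the $\sM$-simplicity, the inner characterization, the uniqueness \eqref{eybntrd}, and finally Part~2) by combining the spectral theorem with a moment-matching argument. First I would realize $\Omega$ as the transfer function of a \emph{minimal} passive selfadjoint system $\tau=\{T;\sM,\sM,\cK\}$ as in \eqref{Tsystem}; such a realization exists by \cite[Theorem~5.1]{AHS3}. Theorem \ref{neueth1} then produces the selfadjoint contraction $T_{\bf\Phi}$ of \eqref{wollen} acting in $\wt\sM:=\sM\oplus\sD_T$, and part~(3) of that theorem gives exactly
\[
\Omega(z)=P_\sM(zI-T_{\bf\Phi})(I-zT_{\bf\Phi})^{-1}\uphar\sM .
\]
Setting $\wt A:=-T_{\bf\Phi}$, which is again a selfadjoint contraction in $\wt\sM$, turns this into \eqref{ajhvelbk}. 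The $\sM$-simplicity \eqref{minim11} is inherited from minimality: since $\tau$ is minimal, Theorem \ref{neueth1}(1) shows $\tau_{\bf\Phi}$ is minimal, i.e. $\cspan\{T_{\bf\Phi}^n\sM:n\in\dN_0\}=\wt\sM$, and passing from $T_{\bf\Phi}$ to $-T_{\bf\Phi}$ does not change this span.

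The inner characterization then follows from \eqref{forminner}: if $\wt\sM=\sM$ then $\Omega(z)=(zI_\sM+\wt A)(I_\sM+z\wt A)^{-1}$ with $\wt A\in[-I_\sM,I_\sM]$ is inner, while if $\Omega$ is inner then Proposition \ref{selfaconserv} together with \eqref{forminner} forces $\Omega(z)=(zI_\sM+\Omega(0))(I_\sM+z\Omega(0))^{-1}$, so that $\{\sM,\Omega(0)\}$ is itself an $\sM$-simple representation; the uniqueness below then forces $\wt\sM=\sM$ (a unitary $\wt U$ with $\wt U\uphar\sM=I_\sM$ mapping $\wt\sM$ onto $\sM$ can exist only when $\wt\sM=\sM$).

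The uniqueness assertion \eqref{eybntrd} I would prove by a direct moment-matching argument, which is the technical heart of the statement. If $\{\wt\sM_1,\wt A_1\}$ and $\{\wt\sM_2,\wt A_2\}$ are two $\sM$-simple representations of the same $\Omega$, then expanding $(zI+\wt A_i)(I+z\wt A_i)^{-1}$ in powers of $z$ and comparing with \eqref{ajhvelbk} shows that the compressed moments satisfy $P_\sM\wt A_1^n\uphar\sM=P_\sM\wt A_2^n\uphar\sM$ for every $n\in\dN_0$. Using selfadjointness of $\wt A_i$, for $f,g\in\sM$ one then gets
\[
(\wt A_1^n f,\wt A_1^m g)=(P_\sM\wt A_1^{n+m}f,g)=(P_\sM\wt A_2^{n+m}f,g)=(\wt A_2^n f,\wt A_2^m g),
\]
so the map $\wt A_1^n f\mapsto \wt A_2^n f$ is isometric on $\span\{\wt A_1^n\sM:n\in\dN_0\}$. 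By $\sM$-simplicity this domain is dense in $\wt\sM_1$ and its image is dense in $\wt\sM_2$, so the map extends to a unitary $\wt U\in\bB(\wt\sM_1,\wt\sM_2)$; taking $n=0$ gives $\wt U\uphar\sM=I_\sM$ and the definition of $\wt U$ yields the intertwining $\wt A_2\wt U=\wt U\wt A_1$, i.e. \eqref{eybntrd}.

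Finally, for Part~2) I would invoke the spectral theorem for the selfadjoint contraction $\wt A=\int_{-1}^1 t\,dE(t)$ from Part~1). For fixed $z\in\dC\setminus\{(-\infty,-1]\cup[1,+\infty)\}$ the function $t\mapsto(z+t)(1+zt)^{-1}$ is continuous on $[-1,1]$ (one checks $1+zt\ne 0$ there), so the functional calculus and \eqref{ajhvelbk} give \eqref{yjdbyn} with $\sigma(t):=P_\sM E([-1,t))\uphar\sM$, normalized by $\sigma(-1)=0$, $\sigma(1)=I_\sM$; this $\sigma$ is nondecreasing and left-continuous. Conversely, given such a $\sigma$, each scalar kernel $(z+t)(1+zt)^{-1}$ is a function in $\cRS(\dC)$ by \eqref{forminner}, and integrating against the nondecreasing $\sigma$ with $\int_{-1}^1 d\sigma=I_\sM$ preserves the Nevanlinna property, the symmetry $\Omega(z)^*=\Omega(\bar z)$, and the bound $-I_\sM\le\Omega(x)\le I_\sM$ on $(-1,1)$, so $\Omega\in\cRS(\sM)$. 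Injectivity of $\sigma\mapsto\Omega$ is again a moment computation: from the expansion $(z+t)(1+zt)^{-1}=t+\sum_{n\ge1}(-1)^{n-1}t^{n-1}(1-t^2)z^n$, equality of all Taylor coefficients together with $\int_{-1}^1 d\sigma=I_\sM$ recovers every moment $\int_{-1}^1 t^n\,d\sigma(t)$, whence $\sigma$ is determined by density of polynomials in $C[-1,1]$. The main obstacle throughout is this uniqueness/injectivity step; the two existence parts reduce almost mechanically to Theorem \ref{neueth1} and to the spectral theorem.
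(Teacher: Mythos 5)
Your proposal is correct and follows essentially the same route as the paper: realize $\Omega$ by a minimal passive selfadjoint system, apply Theorem \ref{neueth1} with $\wt A=-T_{\bf\Phi}$ to get \eqref{ajhvelbk} and \eqref{minim11}, use Proposition \ref{selfaconserv} for the inner characterization, and use the spectral family of $\wt A$ for Part~2). The only difference is that you explicitly carry out the ``standard arguments'' the paper merely cites — the moment-matching construction of the unitary $\wt U$ and the injectivity of $\sigma\mapsto\Omega$ — which is a welcome filling-in of details rather than a different approach.
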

\begin{proof}
1) Realize $\Omega$ as the transfer function of a minimal passive
selfadjoint system $\tau=\left\{T;\sM,\sM,\cK\right\}$. Let the
selfadjoint contraction $T_{\bf\Phi}$ be given by \eqref{wollen} and
let $\wt\sM:=\sM\oplus\sD_T$ and $\wt A:=-T_{\bf\Phi}$. Then the
relations \eqref{ajhvelbk} and \eqref{minim11} are obtained from
Theorem \ref{neueth1}. Using Proposition \ref{selfaconserv} one
concludes that $\Omega$ is inner precisely when $\wt\sM=\sM$ in the
righthand side of \eqref{minim11}. Since
\[
\begin{array}{l}
 P_\sM(z I_{\wt\sM_1}+\wt A_1)(I_{\wt\sM_1}+z\wt A_1)^{-1}\uphar\sM
 =P_\sM(z I_{\wt\sM_2}+\wt A_2)(I_{\wt\sM_2}+z\wt
 A_2)^{-1}\uphar\sM\\[3mm]
\Longleftrightarrow\; P_\sM(I_{\wt\sM_1}+z\wt
A_1)^{-1}\uphar\sM=P_\sM(I_{\wt\sM_2}+z\wt A_2)^{-1}\uphar\sM,
\end{array}
\]
the $\sM$-simplicity with standard arguments (see e.g.
\cite{AHS2,ArlKlotz2010}) yields the existence of unitary $\wt
U\in\bB(\wt\sM_1,\wt\sM_2)$ satisfying \eqref{eybntrd}.

2) Let \eqref{ajhvelbk} be satisfied and let $\sigma(t)=P_\sM\wt
E(t) \uphar\sM$, $t\in[-1,1]$, where $E(t)$ is the spectral family
of the selfadjoint contraction $\wt A$ in $\wt\sM$. Then clearly
\eqref{yjdbyn} holds.

Conversely, let $\sigma$ be a nondecreasing left-continuous
$\bB(\sM)$-valued function $[-1,1]$ with $\sigma(-1)=0,$
$\sigma(1)=I_\sM.$ Define $\Omega$ by the right-hand side of
\eqref{yjdbyn}. Then, the function $\Omega$ in \eqref{yjdbyn}
belongs to the class $\cRS(\sM)$.
\end{proof}

\begin{remark} \label{trach1}
If $\Omega$ is represented in the form \eqref{ajhvelbk}, then the
proof of Theorem \ref{neueth1} shows that the transfer function of
the passive selfadjoint system $\wt\sigma_{\bf \Phi}=\{(-\wt A)_{\bf
\Phi};\sM,\sM,\sD_{\wt A}\}$ coincides with $\Omega$. Moreover, if
$\wt A$ is $\sM$-simple, then $\wt \sigma_{\bf \Phi}$ is minimal.
\end{remark}

\begin{remark}
\label{lheu}
The functions from the class $\cS^{qs}(\sM)$ admits the following
integral representations, see \cite{AHS3}:
\[
\Theta(z)=\Theta(0)+z\,\int_{-1}^{1}\frac{1-t^2}{1-tz}\,dG(t),
\]
where $G(t)$ is a nondecreasing $\bB(\sM)$-valued function with
bounded variation, $G(-1)=0$, $G(1)\le I_\sM,$ and
\[
\left|\left(\left(\Theta(0)+\int_{-1}^{1}t\,dG(t)\right)f,g\right)\right|^2\le
\left(\left(I-G(1)\right)f,f\right)\,\left(\left(I-G(1)\right)g,g\right),
\quad f,g\in\sM.
\]
\end{remark}

\begin{proposition}[cf. \cite{Arl_arxiv_2017}]
\label{fixpoint}
1) The mapping ${\bf \Phi}$ of $\cR\cS(\sM)$ has a unique fixed point
\begin{equation}\label{fixfunc}
\Omega_0(z)=\cfrac{z I_\sM}{1+\sqrt{1-z^2}},\quad\mbox{with} \quad \Omega_0(i)=\cfrac{i I_\sM}{1+\sqrt{2}}.
\end{equation}
2) The mapping ${\bf \Gamma}$ has a unique fixed point
\begin{equation}\label{fixfunczw}
M_0(\xi)=-\cfrac{I_\sM}{\sqrt{\xi^2-1}}\quad \mbox{with}\quad M_0(i)=\cfrac{iI_\sM}{\sqrt{2}}.
\end{equation}
3) Define the weight function $\rho(t)$ and the weighted Hilbert
space $\sH_0$ as follows
\begin{equation}\label{vesighj} \begin{array}{l}\rho_0(t)=\cfrac{1}{\pi}\cfrac{1}{\sqrt{1-t^2}},\; t\in (-1,1),\\
\sH_0:=L_2([-1,1],\sM,\rho_0(t))=L_2([-1,1],\;\rho_0(t))\bigotimes\sM=\left\{f(t):\int\limits_{-1}^1\cfrac{||f(t)||^2_\sM}{\sqrt{1-t^2}}\, dt<\infty\right\}.
\end{array}
\end{equation}
Then $\sH_0$ is the Hilbert space with the inner product
$$\left(f(t),g(t)\right)_{\sH_0}=\cfrac{1}{\pi}\int\limits_{-1}^1(f(t),g(t))_\sM\,\rho_0(t)\,dt=\cfrac{1}{\pi}
\int\limits_{-1}^1\cfrac{(f(t),g(t))_\sM}{\sqrt{1-t^2}}\, dt.$$
Identify $\sM$ with a subspace of $\sH_0$ of constant
vector-functions $\{f(t)\equiv f,\;f\in\sM\}$. Let
$$\cK_0:=\sH_0\ominus\sM=\left\{f(t)\in\sH_0:\int\limits_{-1}^1\cfrac{(f(t),h)_\sM}{\sqrt{1-t^2}}\, dt=0\;\forall h\in\sM\right\}$$
and define in $\sH_0$ the multiplication operator by
\begin{equation}\label{ogthevy}
(T_0f)(t)=tf(t),\; f\in\sH_0.
\end{equation}
Then $\Omega_0(z)$ is the transfer function of the simple passive
selfadjoint system
$$\tau_0=\{ T_0;\sM,\sM,\cK_0\},$$
while
\[
M_0(\xi)=P_{\sM}(T_0-\xi I)^{-1}\uphar\sM.
\]
\end{proposition}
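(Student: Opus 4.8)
The plan is to prove the three parts in a linked way, using the concrete realization of part~3) to supply the membership facts needed elsewhere. For part~1) I would read the fixed point equation ${\bf\Phi}(\Omega)=\Omega$ as $\Omega(z)(I-z\Omega(z))=zI-\Omega(z)$ and rearrange it to $(I-z\Omega(z))^2=(1-z^2)I$. Setting $Y(z):=I-z\Omega(z)$, this exhibits $Y$ as a holomorphic $\bB(\sM)$-valued function on the simply connected domain $\dC\setminus\{(-\infty,-1]\cup[1,+\infty)\}$ with $Y(0)=I$ and $Y(z)^2=(1-z^2)I$. Let $s(z)$ be the holomorphic branch of $\sqrt{1-z^2}$ with $s(0)=1$; it exists and is zero-free because $1-z^2$ avoids $(-\infty,0]$ on this domain. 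Then $s(z)^{-1}Y(z)$ is an involution, so $\tfrac12(I-s(z)^{-1}Y(z))$ is a holomorphic idempotent vanishing at $z=0$; by continuity it vanishes near $0$, forcing $Y(z)=s(z)I$ there and, by the identity theorem, on the whole connected domain. Hence $\Omega(z)=\frac{1-s(z)}{z}I=\frac{z}{1+s(z)}I=\Omega_0(z)$, which settles uniqueness; the converse verification ${\bf\Phi}(\Omega_0)=\Omega_0$ is the one-line scalar computation $z-\omega_0=s(1-s)/z$ and $1-z\omega_0=s$ for $\omega_0=z/(1+s)=(1-s)/z$, giving $(z-\omega_0)(1-z\omega_0)^{-1}=(1-s)/z=\omega_0$. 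Membership $\Omega_0\in\cRS(\sM)$ is then obtained from part~3) (or directly from the integral representation \eqref{yjdbyn} of Theorem~\ref{inerdil} with the arcsine distribution).

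For part~2) I would transport everything through the bijection ${\bf U}$ between $\cRS(\sM)$ and $\bN^0_\sM[-1,1]$ by means of the commutation relations \eqref{comrel}, ${\bf U}{\bf\Phi}={\bf\Gamma}{\bf U}$ and ${\bf\Phi}{\bf U}^{-1}={\bf U}^{-1}{\bf\Gamma}$. These show at once that ${\bf U}\Omega_0$ is a fixed point of ${\bf\Gamma}$ and that any fixed point of ${\bf\Gamma}$ is ${\bf U}$ applied to a fixed point of ${\bf\Phi}$, so the uniqueness of part~1) is inherited. It remains to identify $M_0={\bf U}\Omega_0$, i.e. $M_0(\xi)=(\Omega_0(1/\xi)-\xi)^{-1}$ by \eqref{UTR1}: inserting $\sqrt{1-1/\xi^2}=\sqrt{\xi^2-1}/\xi$ gives $\Omega_0(1/\xi)=(\xi+\sqrt{\xi^2-1})^{-1}$ and then $\Omega_0(1/\xi)-\xi=-\sqrt{\xi^2-1}$, whence $M_0(\xi)=-I_\sM/\sqrt{\xi^2-1}$, the branch being fixed by $M_0(i)=iI_\sM/\sqrt2$.

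For part~3) I would first record that $T_0$ is a selfadjoint contraction on $\sH_0$ and that the constant function $\mathbf 1$ is a unit vector, since $\frac1\pi\int_{-1}^1(1-t^2)^{-1/2}\,dt=1$; consequently the embedding of $\sM$ into $\sH_0$ is isometric and $P_\sM$ acts by averaging, $P_\sM g=\frac1\pi\int_{-1}^1 g(t)(1-t^2)^{-1/2}\,dt$. Applying this to $(T_0-\xi I)^{-1}h=h/(t-\xi)$ with $h\in\sM$ reduces the compressed resolvent to the scalar integral $\frac1\pi\int_{-1}^1\frac{dt}{(t-\xi)\sqrt{1-t^2}}$, which under $t=\cos\theta$ becomes $\frac1\pi\int_0^\pi\frac{d\theta}{\cos\theta-\xi}=-1/\sqrt{\xi^2-1}$; thus $M_0(\xi)=P_\sM(T_0-\xi I)^{-1}\uphar\sM$. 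By \eqref{conmezhd} the transfer function of $\tau_0$ equals $M_0^{-1}(1/z)+1/z=\Omega_0(z)$. Finally, simplicity of $\tau_0$ reduces to $\sM$-simplicity of $T_0$: since $T_0^n h=t^n h$ and scalar polynomials are dense in $L_2([-1,1],\rho_0)$ (a finite, compactly supported measure), one has $\cspan\{T_0^n\sM:\,n\in\dN_0\}=\sH_0$.

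The main obstacle is the uniqueness step in part~1): one must rule out non-scalar holomorphic square roots of $(1-z^2)I$, and the idempotent/analytic-continuation argument anchored at $z=0$ is what does this. The only other point requiring care is the consistent tracking of the branches of $\sqrt{1-z^2}$ and $\sqrt{\xi^2-1}$ across the three parts, which is pinned down by the normalizations $s(0)=1$, $M_0(i)=iI_\sM/\sqrt2$ and by the sign in the evaluation of the Chebyshev-weight integral.
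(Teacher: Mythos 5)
Your proposal is correct, and its skeleton coincides with the paper's own proof: both reduce the fixed-point equation to the quadratic identity $(I-z\Omega_0(z))^2=(1-z^2)I_\sM$, both obtain part 2) by transporting part 1) through the bijection ${\bf U}$ using the commutation relations \eqref{comrel}, and both establish part 3) through the same Chebyshev-weight integral identity (the paper simply cites \cite{Ber} for it; you compute it via $t=\cos\theta$) followed by the Schur--Frobenius relation \eqref{sch-fr}. The genuine divergence is in the uniqueness step of part 1), which you rightly single out as the main obstacle. The paper disposes of it in one sentence: expand $\Omega_0(z)=\sum_k C_k z^k$ in $\dD$ and match Taylor coefficients in the quadratic identity, which recursively forces each $C_k$ to be a specific scalar multiple of $I_\sM$, and then continue analytically. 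You instead factor out the scalar branch $s(z)=\sqrt{1-z^2}$ and observe that $\tfrac12\bigl(I-s(z)^{-1}(I-z\Omega_0(z))\bigr)$ is a holomorphic idempotent vanishing at $z=0$, hence identically zero (a nonzero idempotent has norm at least $1$, so it vanishes near $0$, and the identity theorem on the simply connected domain does the rest). This buys something real: it rules out non-scalar holomorphic square roots of $(1-z^2)I_\sM$ explicitly, which is precisely what the paper's coefficient-matching sketch leaves to the reader. Two further points in your favour: you verify the membership $\Omega_0\in\cRS(\sM)$ (via the realization in part 3, or the arcsine measure in \eqref{yjdbyn}), without which the fixed-point claim is not even well posed, and you actually prove the simplicity of $\tau_0$ asserted in part 3) by polynomial density in $L_2([-1,1],\rho_0)$ combined with the $\sM$-simplicity criterion — a claim the paper's proof states but never argues. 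Your branch bookkeeping ($s(0)=1$, the branch of $\sqrt{\xi^2-1}$ fixed by $M_0(i)=iI_\sM/\sqrt{2}$, equivalently by $\xi M_0(\xi)\to -I_\sM$) is consistent across all three parts, so no gap remains.
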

\begin{proof}
1)--2) Let $\Omega_0(z)$ be a fixed point of the mapping ${\bf\Phi}$ of $\cR\cS(\sM)$, i.e.,
\[
\Omega_0(z)=\left(zI-\Omega_0(z)\right)\left(I-z\Omega_0(z)\right)^{-1},\quad 
z\in\dC\setminus\{(-\infty,-1]\cup[1,+\infty)\}.
\]
Then
\[
(I-z\Omega_0(z))^2=(1-z^2)I_\sM.
\]
Using $\Omega_0\in\cR\cS(\sM)$ and the Taylor expansion
$\Omega_0(z)=\sum_{n=0}^\infty C_k z^k$ in the unit disk, it is seen
that $\Omega_0$ is of the form \eqref{fixfunc}.

It follows that the transform $M_0={\bf U}(\Omega_0)$ defined in \eqref{UTR1} is of the form
\eqref{fixfunczw} and it is the unique fixed point of the mapping
${{\bf\Gamma}}$ in \eqref{GTR1}; cf. \eqref{comrel}.

3) For each $h\in\sM$ straightforward calculations, see \cite[pages
545--546]{Ber}, lead to the equality
\[
-\cfrac{h}{\sqrt{\xi^2-1}}=\cfrac{1}{\pi}\int\limits_{-1}^1\cfrac{h}{t-\xi}\,\cfrac{1}{\sqrt{1-t^2}}\,dt.
\]
Therefore, if $T_0$ is the operator of the form \eqref{ogthevy}, then
\[
M_0(\xi)=P_{\sM}(T_0-\xi I)^{-1}\uphar\sM.
\]
It follows that $\Omega_0$ is the transfer function of the system
$\tau_0=\{ T_0;\sM,\sM,\cK_0\}.$
\end{proof}

As is well known, the Chebyshev polynomials of the first kind given
by
\[
\wh T_0(t)=1,\; \wh T_n(t):=\sqrt{2}\cos(n\arccos t),\;n\ge 1
\]
form an orthonormal basis of the space $L_2([-1,1],\rho_0(t)),$
where $\rho_0(t)$ is given by \eqref{vesighj}. These polynomials
satisfy the recurrence relations
\[
\begin{array}{l}
t\wh T_0(t)=\cfrac{1}{\sqrt{2}}\wh T_1(t),\quad
t\wh T_1(t)=\cfrac{1}{\sqrt{2}}\wh T_0(t)+\cfrac{1}{2}\wh T_2(t),\\
t\wh T_n(t)=\cfrac{1}{2}\wh T_{n-1}(t)+\cfrac{1}{2}\wh
T_{n+1}(t),\quad n\ne 2.
\end{array}
\]
Hence the matrix of the operator multiplication by the independent
variable in the Hilbert space $L_2([-1,1], \rho_0(t))$ w.r.t. the
basis $\{\wh T_n(t)\}_{n=0}^\infty$ (the Jacobi matrix) takes the
form
\[
J=\begin{bmatrix} 0 & \cfrac{1}{\sqrt{2}} & 0 &0   & 0 &
\cdot &
\cdot &\cdot \\
\cfrac{1}{\sqrt{2}} & 0 & \cfrac{1}{{2}} & 0 &0& \cdot &
\cdot &\cdot \\
0    & \cfrac{1}{{2}} & 0 & \cfrac{1}{{2}} &0& \cdot &
\cdot&\cdot   \\
0    & 0& \cfrac{1}{{2}} & 0 & \cfrac{1}{{2}} &0& \cdot &
\cdot
\\
\vdots & \vdots & \vdots & \vdots & \vdots & \vdots & \vdots&\vdots
\end{bmatrix}.
\]
In the case of vector valued weighted Hilbert space
$\sH_0=L_2([-1,1],\sM,\rho_0(t))$ the operator \eqref{ogthevy} is
unitary equivalent to the block operator Jacobi matrix ${\bf
J}_0=J\bigotimes I_\sM$. It follows that the function $\Omega_0$ is
the transfer function of the passive selfadjoint system with the
operator $T_0$ given by the selfadjoint contractive block operator
Jacobi matrix
\[
T_0=\left[\begin{array}{c|c}0&\begin{array}{cccc}\cfrac{1}{\sqrt{2}}I_\sM&0&0&\ldots
\end{array}\\
\hline\begin{array}{c}\cfrac{1}{\sqrt{2}}I_\sM\cr
0\cr\vdots\end{array}&{\bf J_0}\end{array}\right],\;\; {\bf
J_0}=\begin{bmatrix} 0 & \cfrac{1}{{2}}I_\sM & 0 &0   & 0 & \cdot &
\cdot &\cdot \\
\cfrac{1}{{2}}I_\sM & 0 & \cfrac{1}{{2}}I_\sM & 0 &0& \cdot &
\cdot &\cdot \\
0    & \cfrac{1}{{2}}I_\sM & 0 & \cfrac{1}{{2}}I_\sM &0& \cdot &
\cdot&\cdot   \\
0    & 0& \cfrac{1}{{2}}I_\sM & 0 & \cfrac{1}{{2}}I_\sM &0& \cdot &
\cdot
\\
\vdots & \vdots & \vdots & \vdots & \vdots & \vdots & \vdots&\vdots
\end{bmatrix}.
\]

  \subsection{The mapping ${\bf \Pi}$ and Redheffer product}
\begin{lemma}\label{hilfe}
Let $ H$ be a Hilbert space, let $K$ be a selfadjoint contraction in
$H$ and let $\Omega\in\cR\cS(H)$. If $||K||<1$, then
$(I-K\Omega(z))^{-1}$ is defined on $H$ and it is bounded for all
$z\in\dC\setminus\{(-\infty,-1]\cup[1,+\infty)\}$.
\end{lemma}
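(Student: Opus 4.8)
The plan is to prove the statement pointwise: for each fixed $z$ in the domain $\dC\setminus\{(-\infty,-1]\cup[1,+\infty)\}$ it suffices to show that $I-K\Omega(z)$ is boundedly invertible in $H$. I would split the domain into the two pieces $(-1,1)$ and $\{z:\IM z\ne 0\}$, which together exhaust it, since a real point lies in the domain precisely when it belongs to $(-1,1)$.

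On the open unit disk, and in particular for real $x\in(-1,1)$, the function $\Omega$ is a Schur function, so $\|\Omega(z)\|\le 1$; since $K=K^*$ with $\|K\|<1$ one gets $\|K\Omega(z)\|\le\|K\|<1$, and hence $(I-K\Omega(z))^{-1}=\sum_{n\ge 0}(K\Omega(z))^n$ converges in norm. This disposes of the real interval (and indeed of all of $\dD$) at once.

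The substantial case is $\IM z\ne 0$; by symmetry (replacing $z$ by $\bar z$ and using $\Omega(\bar z)=\Omega(z)^*$) I would treat $\IM z>0$, where the Nevanlinna property gives $\IM\Omega(z)\ge 0$. Here $\Omega(z)$ need not be contractive, so instead of a Neumann series I would prove that $T:=I-K\Omega(z)$ is bounded below. Suppose not: pick $f_n$ with $\|f_n\|=1$ and $h_n:=(I-K\Omega(z))f_n\to 0$, and set $u_n:=\Omega(z)f_n$, a bounded sequence. Writing $f_n=Ku_n+h_n$ and using that $\langle u_n,Ku_n\rangle$ is real ($K=K^*$), one finds $\langle \IM\Omega(z)f_n,f_n\rangle=\IM\langle u_n,h_n\rangle\to 0$. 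Now I would invoke the characterization in Theorem~\ref{newchar}: applying the inequality \eqref{ythfd1} to $f_n$ gives $\|f_n\|^2-\|u_n\|^2\ge (1-|z|^2)\,\langle\IM\Omega(z)f_n,f_n\rangle/\IM z\to 0$, whence $\limsup_n\|u_n\|\le 1$. On the other hand $\|Ku_n\|=\|f_n-h_n\|\to 1$ while $\|Ku_n\|\le\|K\|\,\|u_n\|$, so $1\le\|K\|\limsup_n\|u_n\|\le\|K\|<1$, a contradiction. Hence $T$ is bounded below, i.e. injective with closed range, and the same argument applies for $\IM z<0$.

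Finally, to upgrade bounded-below to invertibility I would show $T$ has dense range, i.e. $\ker T^*=\{0\}$ with $T^*=I-\Omega(\bar z)K$. If $T^*f=0$ then $Kf\in\ker(I-K\Omega(\bar z))$; since $\IM\bar z<0$, the lower–half-plane instance of the previous paragraph shows $I-K\Omega(\bar z)$ is bounded below, hence injective, so $Kf=0$ and then $f=\Omega(\bar z)Kf=0$. Thus $T$ is bijective with bounded inverse. I expect the main obstacle to be precisely the possible degeneracy of $\IM\Omega(z)$: its kernel may be nontrivial, so one cannot conclude directly from strict positivity of the imaginary part, and it is the full $\cRS$-inequality \eqref{ythfd1} that compensates, forcing $\Omega(z)$ to act contractively exactly in the directions where $\IM\Omega(z)$ collapses and thereby producing the contradiction with $\|K\|<1$.
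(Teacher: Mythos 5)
Your proof is correct, but it follows a genuinely different route from the paper's. The paper splits the domain into $|z|\le 1$, $z\ne\pm 1$ (Neumann series, as you do) and $|z|>1$; for the latter it chooses $\beta\in(0,\pi/2)$ with $|z\sin\beta\pm i\cos\beta|=1$ and invokes the implications \eqref{propert111} to write $\Omega(z)=(S+i\cos\beta\,I_H)/\sin\beta$ with $\|S\|\le 1$, after which $I-K\Omega(z)$ is factored as $\frac{1}{\sin\beta}(\sin\beta\,I-i\cos\beta\,K)\left(I-(\sin\beta\,I-i\cos\beta\,K)^{-1}KS\right)$ and the estimate $\|(\sin\beta\,I-i\cos\beta\,K)^{-1}K\|<1$ yields the inverse again by a Neumann series — a constructive argument that produces an explicit formula for $(I-K\Omega(z))^{-1}$. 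You instead split the domain into $(-1,1)$ and $\{\IM z\ne 0\}$ and, off the real axis, run a soft argument: the inequality \eqref{ythfd1} of Theorem~\ref{newchar} forces $\limsup\|\Omega(z)f_n\|\le 1$ along any approximate null sequence of $I-K\Omega(z)$, which contradicts $\|K\|<1$; this gives bounded-belowness, and the duality step $\ker(I-\Omega(\bar z)K)=\{0\}$ (reduced, via $Kf\in\ker(I-K\Omega(\bar z))$, to injectivity at $\bar z$) gives surjectivity. The two proofs lean on essentially equivalent consequences of membership in $\cRS(H)$ — Remark~\ref{einmal} shows \eqref{propert111} is itself derived from \eqref{ythfd1} — so neither introduces circularity; the paper's version buys an explicit resolvent-type formula and quantitative bounds, while yours isolates more transparently the mechanism that saves invertibility when $\IM\Omega(z)$ degenerates, namely that $\Omega(z)$ is contractive exactly in those directions. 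One small remark: your mention of the Nevanlinna positivity $\IM\Omega(z)\ge 0$ for $\IM z>0$ is never actually used; the contradiction argument runs verbatim in both half-planes from \eqref{ythfd1} alone, as you in effect note at the end.
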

\begin{proof} If $|z|\le 1$, $z\ne\pm 1$, then $||K||<1$ and $||\Omega(z)||\le 1$
imply that $||K\Omega(z)||<1$. Hence $(I-K\Omega(z))^{-1}$ exists as
bounded everywhere defined operator on $H$.

Now let $|z|>1$ and $z\in
\dC\setminus\{(-\infty,-1]\cup[1,+\infty)\}$. Then there exists
$\beta\in (0,\pi/2)$ such that either $|z\sin\beta-i\cos\beta|=1$ or
$|z\sin\beta+i\cos\beta|=1$. Suppose that, for instance,
$|z\sin\beta-i\cos\beta|=1$. Then from \eqref{propert111} one
obtains $||\Omega(z)\sin\beta-i\cos\beta I_H||\le 1.$ Hence
$S:=\Omega(z)\sin\beta-i\cos\beta I_H$ satisfies $||S||\le 1$ and
one has
\[
\Omega(z)=\cfrac{S+i\cos\beta\, I_H}{\sin\beta}.
\]
Furthermore,
\[
\begin{array}{ll}
I-K\Omega(z)&=I-\cfrac{KS+i\cos\beta\,K
}{\sin\beta}=\dfrac{1}{\sin\beta}\left((\sin\beta
\,I-i\cos\beta\,K)-KS\right)\\[4mm]
 &= \dfrac{1}{\sin\beta}(\sin\beta \,I-i\cos\beta\,K)\left(I-(\sin\beta \,I-i\cos\beta\,K)^{-1}KS\right).
\end{array}
\]
Clearly
\[
||(\sin\beta
\,I-i\cos\beta\,K)^{-1}K||^2\le\cfrac{||K||^2}{\sin^2\beta+||K||^2\cos^2\beta}<1,
\]
which shows that $||(\sin\beta \,I-i\cos\beta\,K)^{-1}KS||<1$.
Therefore, the following inverse operator $\left(I-(\sin\beta
\,I-i\cos\beta\,K)^{-1}KS\right)^{-1}$ exists and is everywhere
defined on $H$. This implies that
\[
(I-K\Omega(z))^{-1}=\sin \beta\left(I-(\sin\beta \,I-i\cos\beta\,K)^{-1}KS\right)^{-1}(\sin\beta \,I-i\cos\beta\,K)^{-1}.
\]
\end{proof}

\begin{theorem}\label{mappi} Let
\[
{\bf K}=\begin{bmatrix}
K_{11}&K_{12}\cr K^*_{12}& K_{22}\end{bmatrix}:\begin{array}{l}\sM\\\oplus\\ H\end{array}\to \begin{array}{l}\sM\\\oplus\\ H\end{array}
\]
be a selfadjoint contraction. Then the following two assertions
hold:

1) If $||K_{22}||<1$, then for every $\Omega\in\cR\cS(H)$ the
transform
\begin{equation}\label{lkgo}
\Theta(z):=
 K_{11}+K_{12}\Omega(z)(I-K_{22}\Omega(z))^{-1}K^*_{12},\quad
 z\in\dC\setminus\{(-\infty,-1]\cup[1,+\infty)\},
\end{equation}
also belongs to $\cR\cS(\sM)$.

2) If $\Omega\in\cR\cS(H)$ and $\Omega(0)=0$, then again the
transform $\Theta$ defined in \eqref{lkgo} belongs to $\cR\cS(\sM)$.
\end{theorem}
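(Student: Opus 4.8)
The plan is to verify directly the four defining properties of the class $\cRS(\sM)$ in Definition \ref{rsm}: that $\Theta$ is holomorphic on $\dC\setminus\{(-\infty,-1]\cup[1,+\infty)\}$, that $\Theta(z)^*=\Theta(\bar z)$, that $\IM z\,\IM\Theta(z)\ge 0$, and that $-I_\sM\le\Theta(x)\le I_\sM$ for $x\in(-1,1)$. In case 1), where $\|K_{22}\|<1$, Lemma \ref{hilfe} (applied with $K=K_{22}$) shows that $(I-K_{22}\Omega(z))^{-1}$ is bounded and everywhere defined throughout the domain, so $\Theta$ is holomorphic there. The symmetry $\Theta(z)^*=\Theta(\bar z)$ then follows from $\Omega(z)^*=\Omega(\bar z)$, the selfadjointness of ${\bf K}$ (so $K_{11}=K_{11}^*$ and $K_{22}=K_{22}^*$), and the identity $\Omega(I-K_{22}\Omega)^{-1}=(I-\Omega K_{22})^{-1}\Omega$.

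For the Nevanlinna sign condition I would compute the imaginary part of $\Theta$ explicitly. Writing $U(z):=(I-K_{22}\Omega(z))^{-1}$ and using $K_{22}=K_{22}^*$, a short manipulation gives
\[
\Theta(z)-\Theta(z)^*=K_{12}\,U(z)^*\bigl(\Omega(z)-\Omega(z)^*\bigr)U(z)\,K_{12}^*,
\]
so that
\[
\IM\Theta(z)=K_{12}\,U(z)^*\,\bigl(\IM\Omega(z)\bigr)\,U(z)\,K_{12}^*.
\]
Since $\Omega\in\cRS(H)$ is a Nevanlinna function, $\IM\Omega(z)$ has the same sign as $\IM z$, and the displayed formula transports this sign to $\IM\Theta(z)$; hence $\IM z\,\IM\Theta(z)\ge 0$.

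The bound on $(-1,1)$ I would obtain from a feedback (Redheffer) argument. Fix $x\in(-1,1)$; then $\Omega(x)$ is a selfadjoint contraction on $H$, and $(I-K_{22}\Omega(x))^{-1}$ exists (by $\|K_{22}\|<1$ in case 1), or by $\|\Omega(x)\|\le|x|<1$ in case 2)). Given $a\in\sM$, put $b:=(I-\Omega(x)K_{22})^{-1}\Omega(x)K_{12}^*a$, and set $c:=K_{11}a+K_{12}b$ and $d:=K_{12}^*a+K_{22}b$, so that $b=\Omega(x)d$ and $c=\Theta(x)a$. Contractivity of ${\bf K}$ gives $\|c\|^2+\|d\|^2\le\|a\|^2+\|b\|^2$, while $\|b\|=\|\Omega(x)d\|\le\|d\|$; combining these yields $\|\Theta(x)a\|=\|c\|\le\|a\|$. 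Together with $\Theta(x)^*=\Theta(x)$ this gives $-I_\sM\le\Theta(x)\le I_\sM$, and case 1) is complete.

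The main work is case 2), where $\|K_{22}\|=1$ is allowed and Lemma \ref{hilfe} no longer yields holomorphy on the whole domain. Here the hypothesis $\Omega(0)=0$ is used first to secure well-definedness on $\dD$: by Proposition \ref{schaffen} (or Schwarz's lemma) one has $\|\Omega(z)\|\le|z|<1$ for $z\in\dD$, so $\|K_{22}\Omega(z)\|<1$ and $\Theta$ is holomorphic on $\dD$, where the computations of the two previous paragraphs apply verbatim. I would then reduce case 2) to case 1) by approximation: for $r\in(0,1)$ the operator $r{\bf K}$ is again a selfadjoint contraction whose $(2,2)$-block $rK_{22}$ has norm $<1$, so case 1) gives $\Theta_r(z):=rK_{11}+r^2K_{12}\Omega(z)(I-rK_{22}\Omega(z))^{-1}K_{12}^*\in\cRS(\sM)$, and $\Theta_r\to\Theta$ locally uniformly on $\dD$ as $r\uparrow 1$. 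To conclude that the limit extends to an element of $\cRS(\sM)$ on the full domain, I would pass through the correspondence \eqref{conmezhd}: the functions $M_r(\xi):=(\Theta_r(1/\xi)-\xi)^{-1}$ lie in $\bN_{\sM}^0[-1,1]$ and, being of the form $P_\sM(T_r-\xi I)^{-1}\uphar\sM$ for selfadjoint contractions $T_r$, obey the uniform bound $\|M_r(\xi)\|\le 1/\mathrm{dist}(\xi,[-1,1])$ on compact subsets of $\dC\setminus[-1,1]$; thus $\{M_r\}$ is a normal family. Since $M_r\to M:=(\Theta(1/\xi)-\xi)^{-1}$ for $|\xi|>1$, a Vitali-type argument gives $M_r\to M$ locally uniformly on $\dC\setminus[-1,1]$ with $M\in\bN_{\sM}^0[-1,1]$, and then $\Theta(z)=M^{-1}(1/z)+1/z\in\cRS(\sM)$ by \eqref{conmezhd}. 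The delicate step — and precisely the reason $\Omega(0)=0$ is imposed — is this passage to the limit: without the bound $\|\Omega(z)\|<1$ on $\dD$ the candidate $\Theta$ need not even be defined on $\dD$, and the normal-families argument securing the holomorphic continuation to $|z|>1$ would have no base domain on which to start.
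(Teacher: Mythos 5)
Your part 1) is correct, and your argument for the bound $-I_\sM\le\Theta(x)\le I_\sM$ on $(-1,1)$ is genuinely different from the paper's: you use an elementary feedback argument (set $b=(I-\Omega(x)K_{22})^{-1}\Omega(x)K_{12}^*a$ and exploit contractivity of ${\bf K}$ applied to $\begin{bmatrix}a\cr b\end{bmatrix}$ together with $\|b\|=\|\Omega(x)d\|\le\|d\|$), whereas the paper parameterizes the blocks of ${\bf K}$ via Proposition \ref{ParContr}/Remark \ref{stut1} ($K_{12}=ND_{K_{22}}$, $K_{11}=-NK_{22}N^*+D_{N^*}LD_{N^*}$) and verifies contractivity through explicit defect-norm identities. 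Your route is shorter and avoids Appendix \ref{AppendixB} entirely; the paper's route produces formulas that it reuses later (e.g.\ in Theorem \ref{redpro}). The Nevanlinna computation $\IM\Theta(z)=K_{12}U(z)^*(\IM\Omega(z))U(z)K_{12}^*$ is the same in both.

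In part 2) there is a genuine gap. The theorem asserts that the formula \eqref{lkgo} defines an element of $\cRS(\sM)$ on all of $\dC\setminus\{(-\infty,-1]\cup[1,+\infty)\}$; implicit in this is that $(I-K_{22}\Omega(z))^{-1}$ exists as a bounded operator at every point of that domain, even though $\|K_{22}\|=1$ is allowed. Your argument establishes only that the function defined by \eqref{lkgo} on $\dD$ (where $\|\Omega(z)\|\le|z|<1$ makes the inverse unproblematic) admits an extension belonging to $\cRS(\sM)$, obtained through the correspondence \eqref{conmezhd} and a normal-families limit of the functions $\Theta_r$. It does not show that $I-K_{22}\Omega(z)$ is invertible for $z$ in the domain with $|z|\ge 1$, where $\Omega(z)$ need not be a contraction; and invertibility there cannot be inferred from the mere existence of the extension, since the product $K_{12}\Omega(z)(I-K_{22}\Omega(z))^{-1}K_{12}^*$ could in principle continue holomorphically across points at which the middle inverse fails to exist. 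This well-definedness is precisely the crux of part 2) in the paper: there one realizes $\Omega$ with zero feedthrough as $\Omega(z)=zN(I-zS)^{-1}N^*$ with $S=D_{N^*}LD_{N^*}$, observes that $N^*K_{22}N+S$ is again a selfadjoint contraction, and uses the identity
\begin{equation*}
\left(I-zK_{22}N(I-zS)^{-1}N^*\right)^{-1}=I+zK_{22}N\left(I-z(N^*K_{22}N+S)\right)^{-1}N^*
\end{equation*}
to get bounded invertibility on the whole domain, after which the computations of part 1) apply. To close your proof you would need to supply this (or an equivalent) argument; the approximation-plus-Vitali step, while essentially sound as far as it goes, proves a strictly weaker statement than the one claimed.
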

\begin{proof} 1) It follows from Lemma \ref{hilfe} that $(I-K_{22}\Omega(z))^{-1}$ exists as a bounded operator on $H$
for all $z\in\dC\setminus\{(-\infty,-1]\cup[1,+\infty)\}$.
Furthermore,
\begin{multline*}
\Theta(z)-\Theta(z)^*=K_{12}\Omega(z)(I-K_{22}\Omega(z))^{-1}K^*_{12}-K_{12}(I-\Omega(z)^*K_{22})^{-1}\Omega(z)^*K^*_{12}\\
=K_{12}(I-\Omega(z)^*K_{22})^{-1}\left((I-\Omega(z)^*K_{22})\Omega(z)-\Omega(z)^*(I-K_{22}\Omega(z))\right)(I-K_{22}\Omega(z))^{-1}K^*_{12}\\
=K_{12}(I-\Omega(z)^*K_{22})^{-1}\left(\Omega(z)-\Omega(z)^*\right)(I-K_{22}\Omega(z))^{-1}K^*_{12}.
\end{multline*}
Thus, $\Theta$ is a Nevanlinna function on the domain
$\dC\setminus\{(-\infty,-1]\cup[1,+\infty)\}$.

Since ${\bf K}$ is a selfadjoint contraction, its entries are of the
form (again see Proposition \ref{ParContr} and Remark \ref{stut1}):
\[
K_{12}= ND_{K_{22}},\; K^*_{12}=D_{K_{22}}N^*,\; K_{11}=-NK_{22}N^*+D_{N^*}LD_{N^*},
\]
where $N:\sD_{K_{22}}\to \sM$ is a contraction and
$L:\sD_{N^*}\to\sD_{N^*}$ is a selfadjoint contraction. This gives
\[
 \Theta(z)=N\left(-K_{22}+D_{K_{22}}\Omega(z)(I-K_{22}\Omega(z))^{-1}D_{K_{22}}\right)N^*+D_{N^*}LD_{N^*}.
\]
Denote
\[
\wt\Theta(z):=-K_{22}+D_{K_{22}}\Omega(z)(I-K_{22}\Omega(z))^{-1}D_{K_{22}}.
\]
Then
\[
 \wt\Theta(z)=D^{-1}_{K_{22}}(\Omega(z)-K_{22})(I-K_{22}\Omega(z))^{-1}D_{K_{22}}=D_{K_{22}}(I-\Omega(z)K_{22})^{-1}(\Omega(z)-K_{22})D^{-1}_{K_{22}}
\]
and
\[
\Theta(z)=N\wt\Theta(z)N^*+D_{N^*}LD_{N^*}.
\]
Again straightforward calculations (cf. \cite{Shmul1,AHS2007}) show
that for all $f\in\sD_{K_{22}}$,
\[
 ||f||^2-||\wt\Theta(z)f||^2 
  =||(I-K_{22}\Omega(z))^{-1}D_{K_{22}}f||^2-||{\Omega(z)}(I-K_{22}\Omega(z))^{-1}D_{K_{22}}f||^2,
\]
and for all $h\in\sM$,
\begin{multline*}
||h||^2-||\Theta(z)h||^2\\
=||N^*h||^2-||\wt\Theta(z)N^*h||^2+||D_LD_{N^*}h||^2+||(D_{N}\wt
\Theta(z)N^*-N^*LD_{N^*})h||^2.
\end{multline*}
Since $\Omega(z)$ is a contraction for all $|z|\le 1$, $z\ne\pm 1$,
one concludes that $\wt \Theta(z)$ and, thus, also $\Theta(z)$ is a
contraction. In addition, the operators $\Theta(x)$ are selfadjoint
for $x\in(-1,1)$. Therefore $\Theta \in\cR\cS(\sM)$.

2) Suppose that $\Omega(0)=0$. To see that the operator
$(I-K_{22}\Omega(z))^{-1}$ exists as a bounded operator on $H$ for
all $z\in\dC\setminus\{(-\infty,-1]\cup[1,+\infty)\}$, realize
$\Omega$ as the transfer function of a passive selfadjoint system
\[
\sigma=\left\{\begin{bmatrix}0&N\cr N^*& S \end{bmatrix};H,H,\cK\right\},
\]
i.e., $\Omega(z)=zN(I-zS)^{-1}N^*$.
Since
$$T=\begin{bmatrix}0&N\cr N^*& S \end{bmatrix}:\begin{array}{l}H\\\oplus\\\cK\end{array}\to \begin{array}{l}H\\\oplus\\\cK\end{array}$$
is a selfadjoint contraction, the operator $N\in\bB(\cK,H)$ is a
contraction and $S$ is of the form $S=D_{N^*}LD_{N^*}$, where
$L\in\bB(\sD_{N^*})$ is a selfadjoint contraction. It follows that
the operator $N^*K_{22}N+S$ is a selfadjoint contraction for an
arbitrary selfadjoint contraction $K_{22}$ in $H$. Therefore,
$\left(I-z(N^*K_{22}N+S)\right)^{-1}$ exists on $\cK$ and is bounded
for all $z\in\dC\setminus\{(-\infty,-1]\cup[1,+\infty)\}$. It is
easily checked that for all
$z\in\dC\setminus\{(-\infty,-1]\cup[1,+\infty)\}$ the equality
\[
\left(I-zK_{22}N(I-zS)^{-1}N^*\right)^{-1}=I+zK_{22}N\left(I-z(N^*K_{22}N+S)\right)^{-1}N^*
\]
holds. Now arguing again as in item 1) one completes the proof.
\end{proof}

\begin{theorem} \label{redpro}
Let
\[
{\bf S}=\begin{bmatrix}A&B\cr B^*&G
\end{bmatrix}:\begin{array}{l}H\\ \oplus\\ \cK\end{array}\to
\begin{array}{l}H\\ \oplus\\ \cK\end{array},
 \quad {\bf K}=\begin{bmatrix} K_{11}&K_{12}\cr K^*_{12}&
K_{22}\end{bmatrix}:\begin{array}{l}\sM\\\oplus\\ H\end{array}\to
\begin{array}{l}\sM\\\oplus\\ H\end{array}
\]
be selfadjoint contractions. Also let $\sigma=\{{\bf S}, H,H,\cK\}$
be a passive selfadjoint system with the transfer function
$\Omega(z)$. Then the following two assertions hold:

1) Assume that $||K_{22}||<1$. Then $\Theta(z)$ given by
\eqref{lkgo} is the transfer function of the passive selfadjoint
system
\[
\tau=\{{\bf T}, \sM,\sM,\cK\},
\]
where ${\bf T}={\bf K}\bullet{\bf S}$ is the Redheffer product (see
\cite{Redheffer, Timotin1995}):
 \begin{equation}
 \label{entredh}
{\bf T}=\begin{bmatrix}K_{11}+K_{12}A(I-K_{22}A)^{-1}K_{12}^*& K_{12}(I-AK_{22})^{-1}B\cr
B^*(I-K_{22}A)^{-1}K_{12}^*& G+B^*K_{22}(I-AK_{22})^{-1}B  \end{bmatrix}: \begin{array}{l}\sM\\\oplus\\ \cK\end{array}\to \begin{array}{l}\sM\\\oplus\\ \cK\end{array}.
\end{equation}
2) Assume that $A=0$. Then the Redheffer product ${\bf T}={\bf
K}\bullet{\bf S}$ is given by
\[
{\bf T}=\begin{bmatrix}K_{11}& K_{12}B\cr B^*K_{12}^*& G+B^*K_{22}B
\end{bmatrix}: \begin{array}{l}\sM\\\oplus\\ \cK\end{array}\to
\begin{array}{l}\sM\\\oplus\\ \cK\end{array}
\]
and the transfer function of the passive selfadjoint system
$\tau=\{{\bf T}, \sM,\sM,\cK\}$ is equal to the function $\Theta$
defined in \eqref{lkgo}.
\end{theorem}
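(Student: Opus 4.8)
The plan is to prove both assertions simultaneously by regarding ${\bf T}={\bf K}\bullet{\bf S}$ as the scattering operator produced from ${\bf K}$ and ${\bf S}$ by closing a feedback loop through the shared space $H$, and then (a) reading off that ${\bf T}$ is a selfadjoint contraction, so that $\tau=\{{\bf T},\sM,\sM,\cK\}$ genuinely is a passive selfadjoint system, and (b) computing its transfer function and identifying it with $\Theta$. Concretely, I view ${\bf K}$ as the map $(u_1,v_1)\mapsto(y_1,w_1)$ on $\sM\oplus H$ and ${\bf S}$ as the map $(u_2,v_2)\mapsto(y_2,w_2)$ on $H\oplus\cK$, and interconnect them by $u_2=w_1$ and $v_1=y_2$. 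The transfer function of $\sigma$ is $\Omega(z)=A+zB(I-zG)^{-1}B^*$, consistent with $\Omega(0)=A$.

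First I would carry out the feedback elimination. Combining $v_1=y_2=Au_2+Bv_2=Aw_1+Bv_2$ with $w_1=K_{12}^*u_1+K_{22}v_1$ and solving for the internal variable yields $v_1=(I-AK_{22})^{-1}(AK_{12}^*u_1+Bv_2)$; substituting back into $y_1=K_{11}u_1+K_{12}v_1$ and $w_2=B^*u_2+Gv_2$ and invoking the intertwining relations $A(I-K_{22}A)^{-1}=(I-AK_{22})^{-1}A$ and $I+K_{22}(I-AK_{22})^{-1}A=(I-K_{22}A)^{-1}$ reproduces exactly the blocks of ${\bf T}$ in \eqref{entredh}. This needs only that $(I-AK_{22})^{-1}$ exist, which holds since $\|A\|\le 1$ and $\|K_{22}\|<1$ force $\|AK_{22}\|<1$. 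Writing ${\bf T}_{ij}$ for the blocks of ${\bf T}$ in \eqref{entredh}, selfadjointness of ${\bf T}$ is then immediate from $A^*=A$, $G^*=G$, $K_{11}^*=K_{11}$, $K_{22}^*=K_{22}$ together with the same intertwining relations, which give ${\bf T}_{11}^*={\bf T}_{11}$, ${\bf T}_{22}^*={\bf T}_{22}$ and ${\bf T}_{21}={\bf T}_{12}^*$.

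For contractivity I would use the energy balance of the interconnection. Fixing $u_1\in\sM$, $v_2\in\cK$ and defining $v_1$, $w_1=u_2$, $y_2=v_1$, $y_1$, $w_2$ as above, the contractivity of ${\bf K}$ and of ${\bf S}$ give $\|y_1\|^2+\|w_1\|^2\le\|u_1\|^2+\|v_1\|^2$ and $\|y_2\|^2+\|w_2\|^2\le\|u_2\|^2+\|v_2\|^2$. Adding these and cancelling the internal terms $\|w_1\|=\|u_2\|$ and $\|v_1\|=\|y_2\|$ leaves $\|y_1\|^2+\|w_2\|^2\le\|u_1\|^2+\|v_2\|^2$; since $(y_1,w_2)={\bf T}(u_1,v_2)$ by the elimination above, this is precisely $\|{\bf T}\|\le 1$. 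Hence $\tau$ is a passive selfadjoint system.

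The hard part is the transfer-function identity. By definition $\Theta_\tau(z)={\bf T}_{11}+z{\bf T}_{12}(I-z{\bf T}_{22})^{-1}{\bf T}_{21}$ with ${\bf T}_{22}=G+B^*K_{22}(I-AK_{22})^{-1}B$, and I must show this equals $K_{11}+K_{12}\Omega(z)(I-K_{22}\Omega(z))^{-1}K_{12}^*$. The route I would take is to establish the resolvent identity $\Omega(z)(I-K_{22}\Omega(z))^{-1}=A(I-K_{22}A)^{-1}+z(I-AK_{22})^{-1}B(I-z{\bf T}_{22})^{-1}B^*(I-K_{22}A)^{-1}$ by repeated use of the push-through identity $(I-XY)^{-1}X=X(I-YX)^{-1}$; sandwiching by $K_{12}(\cdot)K_{12}^*$ and recalling $\Theta(0)={\bf T}_{11}=K_{11}+K_{12}A(I-K_{22}A)^{-1}K_{12}^*$ then collapses $\Theta_\tau$ onto $\Theta$, because the two correction terms are exactly $z{\bf T}_{12}(I-z{\bf T}_{22})^{-1}{\bf T}_{21}$. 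The only genuine obstacle here is the bookkeeping of these resolvent manipulations. For assertion (2), where $A=0$, formula \eqref{entredh} reduces to the stated ${\bf T}$ with ${\bf T}_{22}=G+B^*K_{22}B$, and the identity already derived in the proof of Theorem \ref{mappi}(2), namely $(I-zK_{22}B(I-zG)^{-1}B^*)^{-1}=I+zK_{22}B(I-z(G+B^*K_{22}B))^{-1}B^*$, gives at once $\Omega(z)(I-K_{22}\Omega(z))^{-1}=zB(I-z{\bf T}_{22})^{-1}B^*$ and hence $\Theta_\tau=\Theta$; here $\|K_{22}\|<1$ is not needed, since $\Omega(0)=0$ already secures the relevant invertibility by Theorem \ref{mappi}(2).
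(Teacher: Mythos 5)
Your proposal is correct, and while your computation of the transfer function is in substance the same as the paper's (the identity $\Omega(z)(I-K_{22}\Omega(z))^{-1}=A(I-K_{22}A)^{-1}+z(I-AK_{22})^{-1}B(I-z{\bf T}_{22})^{-1}B^*(I-K_{22}A)^{-1}$ is exactly what the paper extracts from $\Theta(z)-\Theta(0)$ by the same push-through manipulations), your proof that ${\bf T}$ is a selfadjoint contraction is genuinely different and noticeably more economical. The paper establishes contractivity by parameterizing the entries of ${\bf S}$ and ${\bf K}$ via Proposition \ref{ParContr} ($B^*=UD_A$, $G=-UAU^*+D_{U^*}ZD_{U^*}$, etc.), assembling ${\bf T}=\dW\dJ\dW^*+D_{\dW^*}\dX D_{\dW^*}$ with an explicit selfadjoint unitary block operator $\dJ$ built from $\Phi_{K_{22}}(A)$ and $\Phi_A(K_{22})$, and verifying norm identities; you instead interpret ${\bf K}\bullet{\bf S}$ as a feedback interconnection through $H$, derive the blocks \eqref{entredh} by eliminating the loop variable (well-posed since $\|AK_{22}\|<1$, or trivially when $A=0$), and get $\|{\bf T}\|\le 1$ by adding the two contraction inequalities and cancelling the internal signals. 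Your route buys brevity, a uniform treatment of both assertions (the paper omits the proof of assertion 2 as ``similar,'' whereas you close it with the inversion identity already proved in Theorem \ref{mappi}(2)), and a conceptual explanation of \emph{why} the Redheffer product of contractions is a contraction; the paper's route buys an explicit defect-form representation of ${\bf T}$ consistent with the parametrization toolkit of Appendix \ref{AppendixB}, which the paper reuses elsewhere (e.g.\ in Theorem \ref{ksia} and Proposition \ref{cnhfyyj}). The one place where you are slightly informal is dismissing the resolvent bookkeeping as ``the only genuine obstacle''; since you state the precise identity and the push-through mechanism that proves it, this is acceptable, but in a final write-up those two or three lines of algebra should be displayed.
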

\begin{proof}
By definition
\[
\Omega(z)=A+zB(I-zG)^{-1}B^*,\quad
z\in\dC\setminus\{(-\infty,-1]\cup[1,+\infty)\}.
\]
1) Suppose that $||K_{22}||<1$. Since
\[
\Theta(z)= K_{11}+K_{12}\Omega(z)(I-K_{22}\Omega(z))^{-1}K_{12}^*=K_{11}+K_{12}(I-\Omega(z)K_{22})^{-1}\Omega(z)K_{12}^*,
\]
one obtains
\begin{multline*}
\Theta(z)-\Theta(0)=K_{12}(I-\Omega(z)K_{22})^{-1}\left(\Omega(z)-\Omega(0)\right)(I-K_{22}\Omega(0))^{-1}K_{12}^*\\
=zK_{12}\left(I-AK_{22}-zB(I-zG)^{-1}B^*K_{22}\right)^{-1}B(I-zG)^{-1}B^*(I-K_{22}A)^{-1}K_{12}^*.
\end{multline*}
Furthermore,
\[
\begin{array}{l}
\left(I-AK_{22}-zB(I-zG)^{-1}B^*K_{22}\right)^{-1}B(I-zG)^{-1}\\
\qquad=(I-AK_{22})^{-1}\left(I-zB(I-zG)^{-1}B^*K_{22}(I-AK_{22})^{-1}\right)^{-1}B(I-zG)^{-1}\\
\qquad=(I-AK_{22})^{-1}B\left(I-z(I-zG)^{-1}B^*K_{22}(I-AK_{22})^{-1}B\right)^{-1}(I-zG)^{-1}\\
\qquad=(I-AK_{22})^{-1}B\left(I-z\left(G+zB^*K_{22}(I-AK_{22})^{-1}B\right)\right)^{-1}
\end{array}
\]
and one has
\begin{multline*}
\Theta(z)= K_{11}+K_{12}A(I-K_{22}A)^{-1}K_{12}^*\\
+zK_{12}(I-AK_{22})^{-1}B\left(I-z\left(G+zB^*K_{22}(I-AK_{22})^{-1}B\right)\right)^{-1}B^*(I-K_{22}A)^{-1}K_{12}^*.
\end{multline*}
Now it follows from \eqref{entredh} that $\Theta(z)$ is the transfer
function of the system $\tau$.

Next it is shown that the selfadjoint operator ${\bf T}$ given by
\eqref{entredh} is a contraction. Let the entries of ${\bf S}$ and
${\bf K}$ be parameterized by
\[
\left\{\begin{array}{l} B^*=UD_{A},B=D_{A}U^*\\
G=-UAU^*+D_{U^*}ZD_{U^*}\end{array},\right. \quad
\left\{\begin{array}{l}K_{12}=VD_{K_{22}},K_{12}^*=D_{K_{22}}V^*\\
K_{11}=-VK_{22}V^*+D_{V^*}YD_{V^*}\end{array}\right.,
\]
where $V,U,Y,Z$ are contractions acting between the corresponding
subspaces. Also define the operators
\[
\begin{array}{l}\Phi_{K_{22}}(A)=-K_{22}+D_{K_{22}}A(I-K_{22}A)^{-1}D_{K_{22}},\\[3mm]
 \Phi_A(K_{22})=-A+D_{A}K_{22}(I-AK_{22})^{-1}D_{A}.
 \end{array}
\]
This leads to the formula
\begin{multline*}
{\bf T}=\begin{bmatrix}V&0\cr 0&U\end{bmatrix}\begin{bmatrix}\Phi_{K_{22}}(A)&D_{K_{22}}(I-AK_{22})^{-1}D_A\cr
D_A(I-K_{22}A)^{-1}D_{K_{22}}&\Phi_A(K_{22})\end{bmatrix} \begin{bmatrix}V^*&0\cr 0&U^*\end{bmatrix}\\
+\begin{bmatrix}D_{V^*}YD_{V^*}&0\cr 0&D_{U^*}ZD_{U^*}\end{bmatrix}.
\end{multline*}
The block operator
\[
{\dJ}=\begin{bmatrix}\Phi_{K_{22}}(A)&D_{K_{22}}(I-AK_{22})^{-1}D_A\cr
D_A(I-K_{22}A)^{-1}D_{K_{22}}&\Phi_A(K_{22})\end{bmatrix}
\]
is unitary and selfadjoint. Actually, the selfadjointness follows
from selfadjointness of the operators $A, K_{22}$ and
$\Phi_{K_{22}}(A), \Phi_{A}(K_{22}).$ Furthermore, one has the
equalities
\[
 \begin{array}{l} ||f||^2-||\Phi_{K_{22}}(A)f||^2=||D_A(I-K_{22}A)^{-1}D_{K_{22}}f||^2,\\[3mm]
 ||g||^2-||\Phi_{A}(K_{22})g||^2=||D_{K_{22}}(I-AK_{22})^{-1}D_{A}g||^2,\\[3mm]
 \left(\Phi_{K_{22}}(A)f, D_{K_{22}}(I-AK_{22})^{-1}D_{A}g\right)=\left(D_A(I-K_{22}A)^{-1}(A-K_{22})(I-K_{22}A)^{-1}D_{K_{22}}f,g \right),\\[3mm]
 \left(\Phi_{A}(K_{22})g,
   D_{A}(I-K_{22}A)^{-1}D_{K_{22}}f\right)=\left(D_{K_{22}}(I-AK_{22})^{-1}(K_{22}-A)(I-AK_{22})^{-1}D_{A}g,f
\right).
\end{array}
\]
These equalities imply that $\dJ$ is unitary.

Denote
\[
\dW=\begin{bmatrix}V&0\cr 0&U\end{bmatrix},\quad
\dX=\begin{bmatrix}Y&0\cr 0&Z\end{bmatrix}.
\]
Then
\[
\bT=\dW\dJ\dW^*+D_{\dW^*}\dX D_{\dW^*},
\]
and one obtains the equality
\[
 ||h||^2-||\bT h||^2=||D_{\dX}D_{\dW^*}h||^2+||(\dW^*\dX-D_{\dW}\dJ\dW^*)h||^2.
\]
Thus, $\bT$ is a selfadjoint contraction.

The proof of the statement 2) is similar to the proof of statement
1) and is omitted.
\end{proof}

\subsection{The mapping $\Omega(z)\mapsto(a\,I+\Omega(z))\left(I+a\,\Omega(z)\,\right)^{-1}$}
\begin{proposition}\label{Fruhlingnew}
Let
$$\tau=\left\{\begin{bmatrix}A&B\cr B^*&G \end{bmatrix};\sM,\sM,\cK\right\}$$
be a passive selfadjoint system with transfer function $\Omega.$ Let
$a\in(-1,1)$. Then the passive selfadjoint system
\[
\sigma_a=\left
\{\begin{bmatrix}(aI+A)(I+aA)^{-1}&\sqrt{1-a^2}(I+aA)^{-1}B\cr \sqrt{1-a^2} B^*(I+aA)^{-1}&G-aB^*(I+aA)^{-1}B \end{bmatrix};\sM,\sM,\cK\right\}
\]
has transfer function
\[
\wh\Omega_a(z)=(a\,I+\Omega(z))(I+a\,\Omega(z))^{-1},\quad
z\in\dC\setminus\{(-\infty,-1]\cup[1,+\infty)\}.
\]
\end{proposition}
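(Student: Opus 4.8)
The plan is to recognize the transform $\Omega\mapsto\wh\Omega_a$ as the mapping ${\bf\Pi}_a$ determined by the fundamental symmetry
\[
{\bf K}_a=\begin{bmatrix}aI &\sqrt{1-a^2}I\cr \sqrt{1-a^2}I&- a I\end{bmatrix}:\begin{array}{l}\sM\\\oplus\\ \sM\end{array}\to\begin{array}{l}\sM\\\oplus\\ \sM\end{array},
\]
and then to obtain the asserted realization as an immediate specialization of Theorem \ref{redpro}. Writing $K_{11}=aI$, $K_{12}=K^*_{12}=\sqrt{1-a^2}\,I$, $K_{22}=-aI$, the algebraic identity
\[
aI+(1-a^2)X(I+aX)^{-1}=(aI+X)(I+aX)^{-1}
\]
(valid whenever $I+aX$ is invertible) shows, with $X=\Omega(z)$, that the transform $\Theta$ of \eqref{lkgo} attached to ${\bf K}_a$ is exactly $\wh\Omega_a$. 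Since $\|K_{22}\|=|a|<1$, Lemma \ref{hilfe} ensures that $(I-K_{22}\Omega(z))^{-1}=(I+a\Omega(z))^{-1}$ exists on $\sM$ for every $z\in\dC\setminus\{(-\infty,-1]\cup[1,+\infty)\}$, so $\wh\Omega_a$ is well defined throughout the domain.

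First I would check the hypotheses of Theorem \ref{redpro}(1). The operator ${\bf K}_a$ is selfadjoint, and a one-line computation gives ${\bf K}_a^2=I$, so ${\bf K}_a$ is a fundamental symmetry, hence a selfadjoint contraction, with $\|K_{22}\|=|a|<1$. Taking ${\bf S}$ to be the operator of the given system $\tau$, with $H=\sM$ and the blocks $A,B,G$ playing the roles of the blocks of ${\bf S}$, all hypotheses of item (1) are satisfied; note that $A$ is a selfadjoint contraction, so $(I+aA)^{-1}$ exists.

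Next I would match the four blocks. Substituting the entries of ${\bf K}_a$ into the Redheffer product \eqref{entredh} and using $I-K_{22}A=I-AK_{22}=I+aA$, the entries of ${\bf T}={\bf K}_a\bullet{\bf S}$ read
\[
\begin{bmatrix}aI+(1-a^2)A(I+aA)^{-1}&\sqrt{1-a^2}(I+aA)^{-1}B\cr \sqrt{1-a^2}\,B^*(I+aA)^{-1}&G-aB^*(I+aA)^{-1}B\end{bmatrix}.
\]
The upper-left entry simplifies to $(aI+A)(I+aA)^{-1}$ by the identity above (now with $X=A$), so ${\bf K}_a\bullet{\bf S}$ is precisely the block operator defining $\sigma_a$. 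Theorem \ref{redpro}(1) then guarantees at once that this operator is a selfadjoint contraction and that the transfer function of $\sigma_a$ equals the function $\Theta$ of \eqref{lkgo} for ${\bf K}_a$, which we have identified with $\wh\Omega_a$. This is exactly the claim.

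Because Theorem \ref{redpro} carries the full analytic content, there is no real obstacle: the only work is the bookkeeping above, together with the two applications of the scalar-type identity. The hard part would appear only in a self-contained argument that avoids Theorem \ref{redpro}; there the obstacle splits into computing the transfer function $\wt D+z\wt C(I-z\wt F)^{-1}\wt C^{*}$ of $\sigma_a$ directly, which forces a push-through (Woodbury) manipulation relating $(I-z\wt F)^{-1}$ to $(I-zG)^{-1}$ since $\wt F=G-aB^*(I+aA)^{-1}B$ is a perturbation of $G$ factoring through $B,B^{*}$, and separately verifying contractivity of the block operator, most cleanly through the parametrization of selfadjoint contractive block operators in Appendix \ref{AppendixB}.
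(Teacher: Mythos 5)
Your proof is correct and follows essentially the same route as the paper: both identify the block operator of $\sigma_a$ as the Redheffer product ${\bf K}_a\bullet{\bf S}$ via \eqref{entredh} and reduce the claim to Theorem \ref{redpro}(1), using the identity $aI+(1-a^2)X(I+aX)^{-1}=(aI+X)(I+aX)^{-1}$ to match both the upper-left block and the transform \eqref{lkgo} with $\wh\Omega_a$. Your explicit verification that ${\bf K}_a$ is a fundamental symmetry and your appeal to Lemma \ref{hilfe} merely spell out details the paper leaves implicit.
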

\begin{proof}
Let
\[
{\bf K}_a=\begin{bmatrix}aI &\sqrt{1-a^2}I\cr
\sqrt{1-a^2}&- a I \end{bmatrix}:\begin{array}{l}\sM\\\oplus\\ \sM\end{array}\to \begin{array}{l}\sM\\\oplus\\ \sM\end{array},\; 
{\bf S}=\begin{bmatrix}A&B\cr B^*&G \end{bmatrix}:\begin{array}{l}\\\sM\\ \oplus\\ \cK\end{array}\to \begin{array}{l}\sM\\ \oplus\\ \cK\end{array}.
\]
Then the Redheffer product ${\bf K}_a\bullet {\bf S}$ (cf.
\eqref{entredh}) takes the form
\begin{equation}\label{xfcnyck}
{\bf T}= \begin{bmatrix}(aI+A)(I+aA)^{-1}&\sqrt{1-a^2}(I+aA)^{-1}B\cr \sqrt{1-a^2} B^*(I+aA)^{-1}&G-aB^*(I+aA)^{-1}B \end{bmatrix}:
\begin{array}{l}\sM\\\oplus\\\cK\end{array}\to \begin{array}{l}\sM\\\oplus\\\cK\end{array}.
\end{equation}
On the other hand, for all
$\;z\in\dC\setminus\{(-\infty,-1]\cup[1,+\infty)\}$ one has
\[
\begin{array}{ll}
 K_{11}+K_{12}\Omega(z)(I-K_{22}\Omega(z))^{-1}K^*_{12}
 &=aI+(1-a^2)\Omega(z)(I+a\Omega(z))^{-1}\\
 &=(a\,I+\Omega(z))(I+a\,\Omega(z))^{-1}.
\end{array}
\]
This completes the proof.
\end{proof}

\subsection{The mapping $\Omega(z)\mapsto\Omega\left(\cfrac{z+a}{1+za}\right)$ and its fixed points}
For a contraction $S$ in a Hilbert space and a complex number $a,$
$|a|<1$, define, see \cite{SF},
\[
 S_a:=(S-aI)(I-\bar aS)^{-1}.
\]
The operator $S_a$ is a contraction, too. If $S$ is a selfadjoint
contraction and $a\in(-1,1)$, then $S_a$ is also selfadjoint. One
has $S_a=W_{-a}(S)$ (see Introduction) and, moreover,
\begin{equation}\label{resosa}
\begin{array}{l}
 D_{S_a}=\sqrt{1-a^2}(I-aS)^{-1}D_S,\\
 (I-zS_a)^{-1}=\cfrac{1}{1+az}(I-a S)\left(I-\cfrac{z+a}{1+az}S\right)^{-1},\\
 (zI-S_a)(I-zS_a)^{-1}=\left(\cfrac{z+a}{1+az}\,I-S\right)\left(I-\cfrac{z+a}{1+az}\,S\right)^{-1},
\end{array}
\end{equation}
where $z\in\dC\setminus\{(-\infty,-1]\cup[1,\infty\}$. Let the block
operator
\begin{equation}\label{Tblock2}
T=\begin{bmatrix}D&C\cr C^*&F
\end{bmatrix}:\begin{array}{l}\sM\\\oplus\\\cK
\end{array}\to\begin{array}{l}\sM\\\oplus\\\cK  \end{array}
\end{equation}
be a selfadjoint contraction and let $\Omega(z)=D+zC(I-zF)^{-1}C^*.$
Then from the Schur-Frobenius formula \eqref{Sh-Fr1}  and from the
relation
\[
T_a=(T-aI)(I-aT)^{-1}=\cfrac{1-a^2}{a}(I-aT)^{-1}-\cfrac{1}{a}\,I
\]
it follows that $T_a$ has the block form
\begin{equation}
\label{attrns}
{\SMALL\setlength{\arraycolsep}{2pt}
T_a=\begin{bmatrix}(\Omega(a)-aI)(I-a\Omega(a))^{-1}&(1-a^2)(I-a\Omega(a))^{-1}C(I-aF)^{-1}\cr (1-a^2) (I-aF)^{-1}C^*(I-a\Omega(a))^{-1}&F_a+a(1-a^2)(I-aF)^{-1}C^*(I-a\Omega(a))^{-1}C(I-aF)^{-1}  \end{bmatrix}}
\end{equation}
 \begin{theorem}\label{ksia}
Let
$$\tau=\left\{\begin{bmatrix}D&C\cr C^*&F \end{bmatrix},\;\sM,\sM,\cK\right\}$$
be a passive selfadjoint system with the transfer function $\Omega$.
Then for every $a\in(-1,1)$ the $\bB(\sM)$-valued function
$$\Omega\left(\cfrac{z+a}{1+az}\right)$$
is the transfer function of the passive selfadjoint system
$$\tau_a=\left\{\begin{bmatrix}\Omega(a)&\sqrt{1-a^2}C(I-aF)^{-1}\cr \sqrt{1-a^2}(I-aF)^{-1}C^*&F_a \end{bmatrix},\;\sM,\sM,\cK\right\}.$$
Furthermore, if $\tau$ is a minimal system then $\tau_a$ is minimal,
too.
\end{theorem}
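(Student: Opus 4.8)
The plan is to realize $\Omega$ via the parametrization \eqref{faul}, to recognize the block operator of $\tau_a$ as the \emph{same} parametrized selfadjoint contraction but with $F$ replaced by $F_a$, and then to read off the transfer function and the minimality from \eqref{resosa} and \eqref{minim21}.

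First I would take a passive selfadjoint realization $\tau=\{T;\sM,\sM,\cK\}$ of $\Omega$ and parameterize the entries of $T$ as in \eqref{faul}, so that $C=KD_F$ and $D=-KFK^*+D_{K^*}YD_{K^*}$ with a contraction $K\in\bB(\sD_F,\sM)$ and a selfadjoint contraction $Y\in\bB(\sD_{K^*})$. Since $a\in(-1,1)$, the operator $F_a=(F-aI)(I-aF)^{-1}$ is again a selfadjoint contraction, and $\sD_{F_a}=\sD_F$ because $D_{F_a}^2=(1-a^2)(I-aF)^{-2}D_F^2$ by \eqref{resosa}, so that $\ker D_{F_a}=\ker D_F$ and the triple $(F_a,K,Y)$ is again an admissible parameter set. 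The key step is then to check that the block operator of $\tau_a$ is precisely the selfadjoint contraction built from $(F_a,K,Y)$ through \eqref{faul}. Using $D_{F_a}=\sqrt{1-a^2}(I-aF)^{-1}D_F$ from \eqref{resosa} and the commutativity of functions of $F$, the off-diagonal entry is $\sqrt{1-a^2}\,C(I-aF)^{-1}=KD_{F_a}$, while \eqref{dshazom} together with $(aI-F)(I-aF)^{-1}=-F_a$ gives $\Omega(a)=-KF_aK^*+D_{K^*}YD_{K^*}$. Hence the block operator of $\tau_a$ equals
\[
\begin{bmatrix}-KF_aK^*+D_{K^*}YD_{K^*}&KD_{F_a}\cr D_{F_a}K^*&F_a\end{bmatrix},
\]
and Proposition \ref{ParContr} (cf. Remark \ref{stut1}) shows immediately that it is a selfadjoint contraction; thus $\tau_a$ is a passive selfadjoint system.

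Next I would compute the transfer function of $\tau_a$. Written out, it equals $\Omega(a)+z(1-a^2)\,C(I-aF)^{-1}(I-zF_a)^{-1}(I-aF)^{-1}C^*$. Substituting the resolvent identity $(I-zF_a)^{-1}=\frac{1}{1+az}(I-aF)\bigl(I-w_a(z)F\bigr)^{-1}$ from \eqref{resosa}, with $w_a(z)=\frac{z+a}{1+az}$, the left factor $(I-aF)^{-1}$ cancels the factor $(I-aF)$ produced by the substitution, and the transfer function becomes $\Omega(a)+\bigl(w_a(z)-a\bigr)C\bigl(I-w_a(z)F\bigr)^{-1}(I-aF)^{-1}C^*$, where $w_a(z)-a=z(1-a^2)/(1+az)$. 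On the other hand, the elementary resolvent identity $\zeta(I-\zeta F)^{-1}-a(I-aF)^{-1}=(\zeta-a)(I-\zeta F)^{-1}(I-aF)^{-1}$ applied with $\zeta=w_a(z)$ yields $\Omega(w_a(z))-\Omega(a)=\bigl(w_a(z)-a\bigr)C\bigl(I-w_a(z)F\bigr)^{-1}(I-aF)^{-1}C^*$, which is exactly the same expression. Comparing the two identifies the transfer function of $\tau_a$ with $\Omega\bigl(\tfrac{z+a}{1+az}\bigr)$ for all $z\in\dC\setminus\{(-\infty,-1]\cup[1,+\infty)\}$.

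Finally, for minimality I would invoke \eqref{minim21}. The controllable subspace of $\tau_a$ is $\cspan\{F_a^nD_{F_a}K^*:n\in\dN_0\}$, and that of $\tau$ is $\cspan\{F^nD_FK^*:n\in\dN_0\}$. Since $F_a$ and $D_{F_a}$ are functions of $F$ and $(I-aF)^{-1}$ is boundedly invertible, the smallest $F$-reducing subspace $\cL$ containing $\ran(D_FK^*)$ is invariant under $F_a$ and $D_{F_a}$ and contains $\ran(D_{F_a}K^*)=\sqrt{1-a^2}(I-aF)^{-1}\ran(D_FK^*)$; the reverse inclusion follows by symmetry from $(F_a)_{-a}=F$ and $D_F=\sqrt{1-a^2}(I+aF_a)^{-1}D_{F_a}$. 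Hence the two controllable subspaces coincide, so $\tau_a$ is controllable whenever $\tau$ is; since for passive selfadjoint systems $\cK^c=\cK^o$, this gives minimality of $\tau_a$. I expect the main obstacle to be the bookkeeping in identifying the block operator of $\tau_a$ with the parametrized contraction $(F_a,K,Y)$; once that identification is secured, contractivity comes for free from Appendix \ref{AppendixB}, and the transfer-function and minimality claims reduce to routine manipulations with \eqref{resosa}.
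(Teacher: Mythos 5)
Your proposal is correct and follows essentially the same route as the paper: both identify the block operator of $\tau_a$ with the \eqref{faul}-type parametrization built from the data $(F_a,K,Y)$ (using $D_{F_a}=\sqrt{1-a^2}\,(I-aF)^{-1}D_F$ and $\sD_{F_a}=\sD_F$), which yields passivity, and both deduce the transfer-function identity and the minimality from the relations \eqref{resosa}. The only cosmetic differences are that the paper obtains the transfer function by applying \eqref{dshazom} to $\tau_a$ and to $\Omega\bigl(\tfrac{z+a}{1+az}\bigr)$, while you use a direct resolvent identity, and the paper verifies minimality via the kernel intersections $\bigcap_{|z|<1}\ker K(I-zF_a)^{-1}D_{F_a}$ rather than your equivalent reducing-subspace argument.
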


\begin{proof}
Let
$$C=KD_F, \;D=-KFK^*+D_{K^*}YD_{K^*},$$
be the parametrization for entries of the block operator $T$, cf.
\eqref{faul}, where  $K\in\bB(\sD_F,\cK)$ is a contraction and
$Y\in\bB(\sD_{K^*})$ is a selfadjoint contraction. From
\eqref{dshazom} and \eqref{resosa} we get
\[
\begin{array}{ll}
\Omega\left(\cfrac{z+a}{1+az}\right)
 &=\; D_{K^*}YD_{K^*}+K\left(\cfrac{z+a}{1+az}\,I-F\right)\left(I-\cfrac{z+a}{1+az}\,F\right)^{-1}K^*\\
 &=\; D_{K^*}YD_{K^*}+K\left(zI-F_a\right)\left(I-zF_a\right)^{-1}K^*
\end{array}
\]
with $z\in\dC\setminus\{(-\infty,-1]\cup[1,\infty\}$. The operator
\[
\begin{array}{ll}
 \wh T_a & = \begin{bmatrix}-KF_aK^*+D_{K^*}YD_{K^*}&KD_{F_a}\cr D_{F_a}K^*&F_a\end{bmatrix}\\
     & =\begin{bmatrix}\Omega(a)&\sqrt{1-a^2}C(I-aF)^{-1}\cr \sqrt{1-a^2}(I-aF)^{-1}C^*&F_a \end{bmatrix}
:\begin{array}{l}\sM\\\oplus\\\cK  \end{array}\to\begin{array}{l}\sM\\\oplus\\\cK  \end{array}
\end{array}
\]
is a selfadjoint contraction. The formula \eqref{dshazom} applied to
the system $\tau_a$ gives
\[
 \Omega_{\tau_a}(z)=D_{K^*}YD_{K^*}+K\left(zI-F_a\right)\left(I-zF_a\right)^{-1}K^*.
\]
Hence $\Omega_{\tau_a}(z)=\Omega\left(\cfrac{z+a}{1+az}\right)$ for
all $z\in\dC\setminus\{(-\infty,-1]\cup[1,\infty\}.$

Suppose $\tau$ is the minimal system. This is equivalent to the relations
\[
\begin{array}{ll}
 &\cspan\{F^nD_FK^*\sM:\;n\in\dN_0\}=\cK\\
 &\qquad \Longleftrightarrow \quad \bigcap\limits_{n=0}^\infty\ker (KF^nD_F)=\{0\}\\
 &\qquad \Longleftrightarrow \quad \bigcap_{|z|<1}\ker K(I-zF)^{-1}D_F=\{0\}.
\end{array}
\]
Using the formulas \eqref{resosa} one obtains
\[
\begin{array}{l}
\bigcap_{|z|<1}\ker K(I-zF_a)^{-1}D_{F_a}=\bigcap_{|z|<1}\ker
K\left(I-\cfrac{z+a}{1+az}F\right)^{-1}D_{F}(I-aF)\\[4mm]
=(I-aF)\bigcap_{|\mu|<1}\ker K(I-\mu F)^{-1}D_F=\{0\}
\end{array}
\]
or, equivalently,
\[
\cspan\{F^n_aD_{F_a}K^*\sM,\;n\in\dN_0\}=\cK.
\]
This shows that the system $\tau_a$ is minimal.
\end{proof}

\begin{remark}\label{herbst2}
1) Let $T$ in \eqref{Tblock2} be represented in the form
\[
T=\begin{bmatrix}K&0\cr 0&I
\end{bmatrix}\dJ_F\begin{bmatrix}K^*&0\cr 0&I
\end{bmatrix}+\begin{bmatrix}D_{K^*}YD_{K^*}&0\cr 0&0\end{bmatrix},
\]
see Remark \ref{herbst}. Then
\[
\begin{array}{ll}
\begin{bmatrix}-KF_aK^*+D_{K^*}YD_{K^*}&KD_{F_a}\cr D_{F_a}K^*&F_a \end{bmatrix}
 &=\; \begin{bmatrix}\Omega(a)&\sqrt{1-a^2}C(I-aF)^{-1}\cr \sqrt{1-a^2}(I-aF)^{-1}C^*&F_a
 \end{bmatrix}\\[5mm]
 &=\; \begin{bmatrix}K&0\cr 0&I \end{bmatrix}\dJ_{F_a}\begin{bmatrix}K^*&0\cr 0&I \end{bmatrix}
  +\begin{bmatrix}D_{K^*}YD_{K^*}&0\cr 0&0\end{bmatrix}.
\end{array}
\]
2) Let the transformation ${\bf V}_a$ with $a\in (-1,1)$ be defined
by
\[ \begin{bmatrix}D&C\cr C^*&F \end{bmatrix}\;\stackrel{{{\bf V}_a}}{\mapsto} \,\wh T_a
 =\begin{bmatrix}\Omega(a)&\sqrt{1-a^2}C(I-aF)^{-1}\cr \sqrt{1-a^2}(I-aF)^{-1}C^*&F_a
 \end{bmatrix}.
\]
Then for all $a,b\in (-1,1)$ one has the identities
\[
{\bf V}_a\circ{\bf V}_b={\bf V}_b\circ{\bf V}_a={\bf V}_c,\textrm{
 where } c=\cfrac{a+b}{1+ab}.
\]
\end{remark}
\begin{proposition}\label{nnnee}
The fixed points of the mapping
$\Omega(z)\mapsto\Omega\left(\cfrac{z+a}{1+za}\right)$,
$a\in(-1,1),$ $a\ne 0$, consist only of constant functions.
\end{proposition}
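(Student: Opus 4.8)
The plan is to exploit the dynamics of the scalar Möbius map $w_a(z)=\frac{z+a}{1+az}$ on the interval $(-1,1)$. The fixed-point condition reads $\Omega\circ w_a=\Omega$ on the whole domain $\dC\setminus\{(-\infty,-1]\cup[1,+\infty)\}$. Iterating and using the group law $w_a\circ w_b=w_{(a+b)/(1+ab)}$ recalled in the Introduction, one obtains $\Omega\circ w_a^{\circ n}=\Omega$ for every $n\in\dN$. The hypothesis $a\ne 0$ is essential, since $w_0=\mathrm{id}$ fixes every $\Omega$.

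First I would analyze the orbits of $w_a$ on $(-1,1)$. A direct computation gives $w_a(x)-x=\frac{a(1-x^2)}{1+ax}$, so for $0<a<1$ the map strictly increases every $x\in(-1,1)$, while the only real fixed points of $w_a$ are the endpoints $\pm 1$. Hence, for each fixed $x\in(-1,1)$ the orbit $x_n:=w_a^{\circ n}(x)$ increases monotonically to $1$ (and decreases to $-1$ when $-1<a<0$). This is the one genuinely substantive observation: that $\pm 1$ are the sole, boundary, fixed points and that every interior orbit is driven to an endpoint.

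Next, fix $x\in(-1,1)$. The iterated identity gives $\Omega(x)=\Omega(x_n)$ for all $n$. Since $\Omega\in\cRS(\sM)$, the function $x\mapsto\Omega(x)$ is non-decreasing on $(-1,1)$ and the strong limit $\Omega(1)$ exists, see \eqref{limits}; therefore $\Omega(x_n)\to\Omega(1)$ strongly. Passing to the limit yields $\Omega(x)=\Omega(1)$ for every $x\in(-1,1)$, so that $\Omega$ is constant on $(-1,1)$ with value the selfadjoint contraction $A:=\Omega(1)\in[-I_\sM,I_\sM]$ (for $-1<a<0$ the same argument gives the constant value $\Omega(-1)$).

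Finally, since $\Omega$ is holomorphic on the connected domain $\dC\setminus\{(-\infty,-1]\cup[1,+\infty)\}$ and coincides with the constant $A$ on the non-discrete set $(-1,1)$, the identity theorem, applied to each scalar map $z\mapsto(\Omega(z)f,g)$ with $f,g\in\sM$, forces $\Omega(z)\equiv A$ throughout the domain. The converse is immediate: any constant $\Omega\equiv A$ with $A\in[-I_\sM,I_\sM]$ satisfies $\Omega\circ w_a=\Omega$ and lies in $\cRS(\sM)$. Everything beyond the dynamical observation above (the monotonicity of $\Omega$, the existence of the strong boundary values $\Omega(\pm 1)$, and the identity theorem) is standard, so I expect no serious obstacle; the only point requiring care is keeping track of the direction of the orbit according to the sign of $a$.
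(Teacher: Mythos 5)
Your proof is correct, but it takes a genuinely different route from the paper's. The paper argues operator-theoretically: it realizes $\Omega$ as the transfer function of a passive selfadjoint system with block operator parameterized by \eqref{faul}, notes that the fixed-point equation evaluated at the \emph{single} point $z=0$ gives $\Omega(0)=\Omega(a)$, and via Theorem \ref{ksia} turns this into the operator identity $KFK^*=KF_aK^*$; since $F-F_a=aD_F^2(I-aF)^{-1}$, this forces $D_FK^*=0$, and because $\ran K^*\subseteq\sD_F=\cran D_F$ one gets $K^*=0$, i.e.\ $\Omega\equiv\Omega(0)$. Your argument is instead purely function-theoretic: you iterate the M\"obius map, observe that every interior orbit of $w_a$ converges monotonically to one of the endpoints $\pm 1$ (its only fixed points), combine this with the monotonicity of $x\mapsto\Omega(x)$ and the existence of the strong limits $\Omega(\pm 1)$ recorded in \eqref{limits}, and finish with the identity theorem applied to the scalar functions $z\mapsto(\Omega(z)f,g)$ on the connected domain $\dC\setminus\{(-\infty,-1]\cup[1,+\infty)\}$. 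What your approach buys: it needs no realization or block-operator machinery at all, only facts stated immediately after Definition \ref{rsm} plus elementary dynamics. What the paper's approach buys: it proves the sharper fact that the single equality $\Omega(0)=\Omega(a)$, rather than the full functional equation, already forces constancy. It is worth noting that your own tools yield that sharper fact even more simply and would let you drop the orbit dynamics entirely: if $\Omega(0)=\Omega(a)$ with, say, $a>0$, then monotonicity pinches $\Omega(0)\le\Omega(x)\le\Omega(a)=\Omega(0)$ for $x\in[0,a]$, so $\Omega$ is constant on $[0,a]$ and hence everywhere by the identity theorem.
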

\begin{proof}
Suppose  that for some $a\in(-1,1),$ $a\ne 0$, the equality
$$\Omega\left(\cfrac{z+a}{1+az}\right)=\Omega(z)
$$
is satisfied for all $z\in\dC\setminus\{(-\infty,-1]\cup
[1,+\infty)\}$. Then, in particular, $\Omega(0)=\Omega(a)$.
Therefore from Theorem \ref{ksia} one obtains the equality
$KFK^*=KF_aK^*.$ Now
$$F-F_a= aD_F^2(I-aF)^{-1}$$
leads to
\[
(I-aF)^{-1/2}D_FK^*=0.
\]
Taking into account that $\ran K^*\subseteq\sD_F$, we get  $K^*=0$.
This means that $\Omega(z)\equiv \Omega(0)$. So, the fixed points of
the mapping $\Omega(z)\mapsto\Omega\left(\cfrac{z+a}{1+za}\right)$
are the constant functions only.
\end{proof}

\begin{remark}
A.~Filimonov and E.~Tsekanovski\u{\i} \cite{FilTsek1987} considered
$J$-unitary operator colligations that are automorphic invariant
w.r.t. a  subgroup $G$ of the M\"{o}bius transformations of the unit
disk and its representations in the channel and state spaces. The
characteristic function $W(z)$ of such a colligation satisfies the
condition
\[
W(g(z))V_g=V_g W(z),\quad \forall z\in\dD\quad\mbox{and}\quad
\forall g\in G,
\]
where $\{V_g\}$ is a representation of $G$ in the channel space.
\end{remark}

\subsection{The mapping $\Omega(z)\mapsto\left(\Omega\left(\cfrac{z+a}{1+az}\right)-a\,I\right)\left(I-a\,\Omega\left(\cfrac{z+a}{1+az}\right)\,\right)^{-1}
$ and its fixed points}
\begin{proposition}\label{prufung}
Let $\tau=\{T;\sM,\sM,\cK\}$ be a passive selfadjoint system with
transfer function $\Omega.$ Then the passive selfadjoint system
$\eta_a=\{T_a;\sM,\sM,\cK\}$, $a\in(-1,1)$, has the transfer
function
\[
\wt\Omega_a(z)=\left(\Omega\left(\cfrac{z+a}{1+az}\right)-a\,I_\sM\right)\left(I_\sM-a\,\Omega\left(\cfrac{z+a}{1+az}\right)\,\right)^{-1}.
\]
If $\tau$ is minimal then $\eta_a$ is minimal, too.
\end{proposition}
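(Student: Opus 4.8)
The plan is to bypass the messy block entries of $T_a$ in \eqref{attrns} and work entirely through the Schur--Frobenius identity \eqref{sch-fr}. Since $T$ is a selfadjoint contraction and $a\in(-1,1)$, the operator $T_a=(T-aI)(I-aT)^{-1}$ is again a selfadjoint contraction (as recorded just before \eqref{resosa}), so $\eta_a=\{T_a;\sM,\sM,\cK\}$ really is a passive selfadjoint system and its transfer function $\Omega_{\eta_a}$ is pinned down by
\[
P_\sM(I-zT_a)^{-1}\uphar\sM=(I_\sM-z\,\Omega_{\eta_a}(z))^{-1},\quad z\in\dC\setminus\{(-\infty,-1]\cup[1,+\infty)\}.
\]
Hence it suffices to compute the left-hand side and identify it with $(I_\sM-z\wt\Omega_a(z))^{-1}$.

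First I would insert the resolvent formula \eqref{resosa}, namely
\[
(I-zT_a)^{-1}=\frac{1}{1+az}(I-aT)\left(I-w\,T\right)^{-1},\qquad w:=\frac{z+a}{1+az},
\]
and split $I-aT=(I-wT)+(w-a)T$, where $w-a=\dfrac{z(1-a^2)}{1+az}$. Compressing to $\sM$ this gives
\[
P_\sM(I-zT_a)^{-1}\uphar\sM=\frac{1}{1+az}\Bigl(I_\sM+\frac{z(1-a^2)}{1+az}\,P_\sM T(I-wT)^{-1}\uphar\sM\Bigr).
\]
Using $T(I-wT)^{-1}=w^{-1}\bigl((I-wT)^{-1}-I\bigr)$ together with \eqref{sch-fr} applied to $\tau$ at the point $w$ (legitimate because $w_a$ is an automorphism of the domain, so $\Omega(w)$ is defined), one obtains $P_\sM T(I-wT)^{-1}\uphar\sM=\Omega(w)(I-w\Omega(w))^{-1}$, the operator $(I-w\Omega(w))^{-1}$ being bounded since $(I-wT)^{-1}$ is.

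The remaining step is the elementary identity
\[
(I-a\Omega(w))(I-w\Omega(w))^{-1}=I_\sM+(w-a)\Omega(w)(I-w\Omega(w))^{-1},
\]
which shows the expression just obtained equals $\dfrac{1}{1+az}(I-a\Omega(w))(I-w\Omega(w))^{-1}$. On the other side, factoring $a+z=(1+az)w$ out of $I-z\wt\Omega_a(z)$ yields
\[
I_\sM-z\wt\Omega_a(z)=(1+za)\,(I-w\Omega(w))(I-a\Omega(w))^{-1},
\]
whose inverse is precisely the same expression; here $(I-a\Omega(w))^{-1}$ is bounded by Lemma~\ref{hilfe} with $K=aI$. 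Comparing, $P_\sM(I-zT_a)^{-1}\uphar\sM=(I_\sM-z\wt\Omega_a(z))^{-1}$, and the uniqueness in \eqref{sch-fr} gives $\Omega_{\eta_a}=\wt\Omega_a$. The only point needing care is the bookkeeping of the substitution $w=w_a(z)$ and the invertibility of $I-a\Omega(w)$; once the two identities $(1+az)w=z+a$ and $w-a=z(1-a^2)/(1+az)$ are available, everything collapses.

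For minimality, recall that for a passive selfadjoint system minimality is equivalent to $\sM$-simplicity of its main operator, so I would show $\cspan\{T_a^n\sM:n\in\dN_0\}=\sM\oplus\cK$. Expanding the resolvent in a power series near $z=0$, this span equals $\cspan\{(I-zT_a)^{-1}\sM:\ z\ \text{near}\ 0\}$; by \eqref{resosa} and the bounded invertibility of $I-aT$ it equals $(I-aT)\,\cspan\{(I-wT)^{-1}\sM:\ w\ \text{near}\ a\}$. Since $w\mapsto(I-wT)^{-1}f$ is holomorphic on the connected set $\dD$ for each $f\in\sM$, analytic continuation shows the last span is independent of the base point and equals $\cspan\{T^n\sM:n\in\dN_0\}=\sM\oplus\cK$ by minimality of $\tau$; applying the invertible operator $I-aT$ preserves it. This parallels the minimality argument in Theorem~\ref{ksia}.
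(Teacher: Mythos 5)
Your proposal is correct and follows essentially the same route as the paper: both rest on the resolvent identity \eqref{resosa} together with the Schur--Frobenius relation \eqref{sch-fr}, reduce $P_\sM(I-zT_a)^{-1}\uphar\sM$ to $\tfrac{1}{1+az}\left(I_\sM-a\Omega(w)\right)\left(I_\sM-w\Omega(w)\right)^{-1}$ with $w=\tfrac{z+a}{1+az}$, and then identify the transfer function of $\eta_a$. Your minimality argument via $\cspan\{T_a^n\sM\}$ and analytic continuation is just the range/span dual of the paper's computation of $\bigcap_{z\in\dD}\ker\left(P_\sM(I-zT_a)^{-1}\right)=(I-aT)^{-1}\bigcap_{\mu\in\dD}\ker\left(P_\sM(I-\mu T)^{-1}\right)$, so it is the same argument in complementary form.
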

\begin{proof}
Let $T$ be a selfadjoint contraction in the Hilbert space $\sH$ and
let $a\in(-1,1)$. Due to \eqref{resosa} for all
$\;z\in\dC\setminus\{(-\infty,-1]\cup[1,\infty\}$ one has
\[
(I-zT_a)^{-1}=\cfrac{1}{1+az}(I-a
T)\left(I-\cfrac{z+a}{1+az}\,T\right)^{-1}.
\]
Moreover,
\[
\begin{array}{ll}
(I-a T)& \left(I-\cfrac{z+a}{1+az}\,T\right)^{-1}=\left(I-\cfrac{z+a}{1+az}\,T\right)^{-1}-aT\left(I-\cfrac{z+a}{1+az}\,T\right)^{-1}\\
 &=\left(I-\cfrac{z+a}{1+az}T\right)^{-1}+a\,\cfrac{1+za}{z+a}\,I-a\,\cfrac{1+za}{z+a}\left(I-\cfrac{z+a}{1+az}\,T\right)^{-1}\\
 &=a\,\cfrac{1+za}{z+a}\, I+\cfrac{z(1-a^2)}{z+a}\left(I-\cfrac{z+a}{1+az}\,T\right)^{-1},
\end{array}
\]
and
\[
\begin{array}{ll}
(I-zT_a)^{-1}
 &=\; \cfrac{1}{1+az}\left(a\,\cfrac{1+za}{z+a}\, I+\cfrac{z(1-a^2)}{z+a}\left(I-\cfrac{z+a}{1+az}\,T\right)^{-1}\right)\\
 &=\; \cfrac{a}{z+a}I+\cfrac{z(1-a^2)}{(z+a)(1+az)}\left(I-\cfrac{z+a}{1+az}\,T\right)^{-1}.
\end{array}
\]
Let $\sH=\sM\oplus\cK$. Since
$P_\sM(I-zT)^{-1}\uphar\sM=(I-z\Omega(z))^{-1}$, we get
\[
\begin{array}{ll}
 P_\sM(I-zT_a)^{-1}\uphar\sM
 &=\; \cfrac{a}{z+a}I_\sM+\cfrac{z(1-a^2)}{(z+a)(1+az)}\left(I_\sM-\cfrac{z+a}{1+az}\Omega\left(\cfrac{z+a}{1+az}\right)\,\right)^{-1}\\
 &=\; \cfrac{1}{1+az}\left(I_\sM-a\,\Omega\left(\cfrac{z+a}{1+az}\right)\right)\left(I_\sM-\cfrac{z+a}{1+az}\,\Omega\left(\cfrac{z+a}{1+az}\right)\,\right)^{-1}.
\end{array}
\]
Now consider the passive selfadjoint system
\[
\eta_a=\{T_a;\sM,\sM,\cK\},\quad T_a=(T-a I)(I-aT)^{-1},
\]
and let $\Omega_{\eta_a}$ be the transfer function of $\eta_a$. Then
from $P_\sM(I-zT_a)^{-1}\uphar\sM=(I_\sM-z \Omega_{\eta_a}(z))^{-1}$
we get
\[
(I_\sM-z\Omega_{\eta_a}(z)^{-1}=\cfrac{1}{1+az}\left(I_\sM-a\,\Omega\left(\cfrac{z+a}{1+az}\right)\right)
\left(I_\sM-\cfrac{z+a}{1+az}\,\Omega\left(\cfrac{z+a}{1+az}\right)\right)^{-1}.
\]
Hence,
\[
\Omega_{\eta_a}(z)=\left(\Omega\left(\cfrac{z+a}{1+az}\right)-a\,I_\sM\right)\left(I_\sM-a\,\Omega\left(\cfrac{z+a}{1+az}\right)\,\right)^{-1}.
\]
Since
\[
\begin{array}{ll}
 \bigcap\limits_{z\in\dD}\ker\left(P_\sM(I-zT_a)^{-1}\right)
 &=\; \bigcap\limits_{z\in\dD}\ker\left(P_\sM\left(I-\cfrac{z+a}{1+az}T\right)^{-1}(I-aT)\right)\\
 &=\; (I-aT)^{-1}\bigcap\limits_{\mu\in\dD}\ker\left(P_\sM(I-\mu T)^{-1}\right),
\end{array}
\]
we conclude that if $\tau$ is minimal then also $\eta_a$ is minimal.
\end{proof}

\begin{corollary}\label{Fruhling}
Let $\tau=\{T;\sM,\sM,\cK\}$ be a passive selfadjoint system with
transfer function $\Omega.$  Let $a\in(-1,1)$ and suppose that
$\sigma_a=\{\cT(a);\sM,\sM,\cK\} $ is a passive selfadjoint system
with transfer function $\Omega\left(\cfrac{z-a}{1-az}\right)$; see
Theorem \ref{ksia}. Then the passive selfadjoint system
\[
\zeta_a=\{(\cT(a))_a;\sM,\sM,\cK\},\; (\cT(a))_a:=(\cT(a)-aI)(I- a\cT(a))^{-1}
\]
has the transfer function
\[
\Omega_{\zeta_a}(z)=(\Omega(z)-a\,I)(I-a\,\Omega(z))^{-1},\;z\in\dC\setminus\{(-\infty,-1]\cup[1,+\infty)\}.
\]
If $\tau$ is minimal then $\zeta_a$ is minimal, too.
\end{corollary}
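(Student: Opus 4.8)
The plan is to derive this as a formal composition of Theorem \ref{ksia} and Proposition \ref{prufung}, using the fact that the two M\"obius maps $z\mapsto\cfrac{z+a}{1+az}$ and $z\mapsto\cfrac{z-a}{1-az}$ are mutually inverse. First I would record what $\sigma_a$ is: it is precisely the system produced by Theorem \ref{ksia} with the parameter $-a$ in place of $a$. Since $a\in(-1,1)$ forces $-a\in(-1,1)$, that theorem guarantees that $\sigma_a=\{\cT(a);\sM,\sM,\cK\}$ (with main operator $F_{-a}$) has transfer function $\Psi(z):=\Omega\left(\cfrac{z-a}{1-az}\right)$, and moreover that $\sigma_a$ is minimal whenever $\tau$ is minimal.

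Next I would apply Proposition \ref{prufung} to the system $\sigma_a$ itself, now with parameter $a$. The operator produced there is $(\cT(a))_a=(\cT(a)-aI)(I-a\cT(a))^{-1}$, which is exactly the main operator of $\zeta_a$, and the proposition identifies its transfer function as
\[
\Omega_{\zeta_a}(z)=\left(\Psi\left(\cfrac{z+a}{1+az}\right)-a\,I_\sM\right)\left(I_\sM-a\,\Psi\left(\cfrac{z+a}{1+az}\right)\right)^{-1}.
\]
The one computation to carry out is the inner substitution. Writing it out gives
\[
\cfrac{\frac{z+a}{1+az}-a}{1-a\,\frac{z+a}{1+az}}
=\cfrac{(z+a)-a(1+az)}{(1+az)-a(z+a)}
=\cfrac{z(1-a^2)}{1-a^2}=z,
\]
so that $\Psi\left(\cfrac{z+a}{1+az}\right)=\Omega(z)$ for all $z\in\dC\setminus\{(-\infty,-1]\cup[1,+\infty)\}$. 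Substituting this back yields at once $\Omega_{\zeta_a}(z)=(\Omega(z)-a\,I)(I-a\,\Omega(z))^{-1}$, which is the asserted transfer function.

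Finally, the minimality claim follows by chaining the two preservation statements already proved: $\tau$ minimal $\Rightarrow\sigma_a$ minimal by Theorem \ref{ksia}, and then $\sigma_a$ minimal $\Rightarrow\zeta_a$ minimal by Proposition \ref{prufung}. I do not expect a genuine obstacle here, since the statement is a direct corollary of the two preceding results; the only point requiring care is the sign bookkeeping, namely that the minus sign in the argument $\cfrac{z-a}{1-az}$ must be produced by invoking Theorem \ref{ksia} at $-a$, precisely so that the subsequent application of Proposition \ref{prufung} at $+a$ cancels the inner M\"obius map and returns $\Omega(z)$.
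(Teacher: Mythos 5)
Your proof is correct and is essentially the paper's own (implicit) argument: the corollary is stated without proof precisely because it follows by invoking Theorem \ref{ksia} at parameter $-a$ to identify $\sigma_a$ and preserve minimality, and then Proposition \ref{prufung} at parameter $+a$, the two M\"obius substitutions cancelling to give $\Omega(z)$. Your sign bookkeeping and the two-step minimality chain are exactly what is needed.
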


The next result shows that the Redheffer product ${\bf
K}_{-a}\bullet{\bf V}_a(T)$ coincides with $W_{-a}(T)$.

\begin{proposition}\label{cnhfyyj}
Let the block operator $T$ in \eqref{Tblock2} be a selfadjoint
contraction, let $\Omega(z)=D+zC(I-zF)^{-1}C^*$, and denote
\[
\wh T_a=\begin{bmatrix}\Omega(a)&\sqrt{1-a^2}C(I-aF)^{-1}\cr \sqrt{1-a^2}(I-aF)^{-1}C^*&F_a \end{bmatrix} :\begin{array}{l}\sM\\ \oplus\\\cK\end{array}\to \begin{array}{l}\sM\\ \oplus\\\cK\end{array}
\]
and
\[
{\bf K}_{-a}=\begin{bmatrix}-aI &\sqrt{1-a^2}I\cr
\sqrt{1-a^2}& a I \end{bmatrix}:\begin{array}{l}\sM\\\oplus\\ \sM\end{array}\to \begin{array}{l}\sM\\\oplus\\ \sM\end{array}.
\]
Then the Redheffer product ${\bf K}_{-a}\bullet \wh T_a$ satisfies
the equality
\begin{equation}\label{intrav1}
{\bf K}_{-a}\bullet \wh T_a=T_a\left(=(T-a I)(I-a T)^{-1}\right).
\end{equation}
\end{proposition}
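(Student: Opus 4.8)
The plan is to verify the identity \eqref{intrav1} by a direct computation of the Redheffer product using the explicit formula \eqref{entredh} from Theorem \ref{redpro}, and then to compare the outcome block by block with the known block form of $T_a$ displayed in \eqref{attrns}.

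First I would identify the entries to be substituted. In \eqref{entredh} the role of $\mathbf K$ is played by $\mathbf K_{-a}$, so that $K_{11}=-aI$, $K_{12}=K_{12}^*=\sqrt{1-a^2}\,I$ and $K_{22}=aI$, all acting in $\sM$; the role of $\mathbf S$ is played by $\wh T_a$, so that $A=\Omega(a)$, $B=\sqrt{1-a^2}\,C(I-aF)^{-1}$, $B^*=\sqrt{1-a^2}\,(I-aF)^{-1}C^*$ (here using $F=F^*$), and $G=F_a$. Before substituting, I would record that all inverses occurring in \eqref{entredh} are bounded and everywhere defined: since $a\in(-1,1)$ while $\Omega(a)$ and $F$ are selfadjoint contractions, one has $\|a\Omega(a)\|\le|a|<1$ and $\|aF\|\le|a|<1$, so that $(I-a\Omega(a))^{-1}$ and $(I-aF)^{-1}$ exist; moreover, as $K_{22}=aI$ is scalar, $K_{22}A=AK_{22}=a\Omega(a)$, so the two factors $(I-K_{22}A)^{-1}$ and $(I-AK_{22})^{-1}$ coincide.

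Substituting into \eqref{entredh} produces the four blocks simultaneously. The off-diagonal blocks reduce at once: using $K_{12}K_{12}^*=(1-a^2)I$ one gets $K_{12}(I-AK_{22})^{-1}B=(1-a^2)(I-a\Omega(a))^{-1}C(I-aF)^{-1}$, and symmetrically for the $(2,1)$ block, matching the off-diagonal entries of \eqref{attrns}. The $(2,2)$ block equals $G+B^*K_{22}(I-AK_{22})^{-1}B=F_a+a(1-a^2)(I-aF)^{-1}C^*(I-a\Omega(a))^{-1}C(I-aF)^{-1}$, which is precisely the $(2,2)$ entry of \eqref{attrns}.

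The only step requiring a short algebraic manipulation is the $(1,1)$ block. There one obtains
\[
K_{11}+K_{12}A(I-K_{22}A)^{-1}K_{12}^*=-aI+(1-a^2)\Omega(a)(I-a\Omega(a))^{-1},
\]
and I would rewrite $-aI=-a(I-a\Omega(a))(I-a\Omega(a))^{-1}$ and combine the two terms over the common right factor $(I-a\Omega(a))^{-1}$; the numerator collapses to $-aI+\Omega(a)$, giving $(\Omega(a)-aI)(I-a\Omega(a))^{-1}$, which is the $(1,1)$ entry of $T_a$ in \eqref{attrns}. Since all four blocks then coincide with those of $T_a$, the equality \eqref{intrav1} follows. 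I do not expect any genuine obstacle: the argument is an entry-by-entry bookkeeping exercise, and the only points demanding care are keeping track of the scalar factors $\sqrt{1-a^2}$ and carrying out the cancellation in the $(1,1)$ block.
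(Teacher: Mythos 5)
Your proof is correct and follows essentially the same route as the paper: the paper obtains the block form of ${\bf K}_{-a}\bullet \wh T_a$ by invoking the already-specialized formula \eqref{xfcnyck} (with $a$ replaced by $-a$), whereas you substitute directly into the general Redheffer formula \eqref{entredh}, and both proofs conclude by comparing the resulting four blocks with \eqref{attrns}. Your explicit computation is sound (including the $(1,1)$-block cancellation giving $(\Omega(a)-aI)(I-a\Omega(a))^{-1}$), and in fact it matches \eqref{attrns} more cleanly than the paper's own display, which contains harmless sign and factor-order typos in its $(1,1)$ and $(2,1)$ entries.
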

\begin{proof}
It follows from \eqref{xfcnyck} that the mapping ${\bf
K}_{-a}\bullet\wh T_a:\sM\oplus\cK\to \sM\oplus\cK$ has the form
\[
{\bf K}_{-a}\bullet\wh T_a= 
{\SMALL\setlength{\arraycolsep}{2pt}
\begin{bmatrix}(aI-\Omega(a))(I-a\Omega(a))^{-1}&(1-a^2)(I-a\Omega(a))^{-1}C(I-aF)^{-1}\cr
 (1-a^2)C^*(I-aF)^{-1}(I-a\Omega(a))^{-1}&F_a+a(1-a^2)(I-aF)^{-1}C^*(I-a\Omega(a))^{-1}C(I-aF)^{-1} \end{bmatrix}}.
\]
Comparing this with \eqref{attrns} leads to \eqref{intrav1}.
\end{proof}

\begin{theorem}\label{infix}

1) If the function $\Omega$ from $\cRS(\sM)$ is inner, then the
equality
\begin{equation}\label{fixin}
 \Omega(z)=\left(\Omega\left(\cfrac{z+a}{1+az}\right)-a\,I_\sM\right)\left(I_\sM-a\,\Omega\left(\cfrac{z+a}{1+az}\right)\,\right)^{-1}
\end{equation}
holds for all $a\in(-1,1)$ and $z\in\dC\setminus\{(-\infty,-1]\cup
[1,+\infty)\}$.

2) If $\Omega\in\cRS(\sM)$ and
 \eqref{fixin} holds for some $a\in(-1,1)$, $a\ne 0$, then $\Omega$ is an inner function.
\end{theorem}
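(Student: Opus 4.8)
The plan is to treat the two parts separately, relying on the explicit description \eqref{forminner} of inner functions for Part~1 and on the sufficient condition of Theorem~\ref{thinne} for Part~2.

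For Part~1, I would start from the explicit form \eqref{forminner} of an inner function, namely $\Omega(z)=(zI+D)(I+zD)^{-1}$ with $D=\Omega(0)\in[-I_\sM,I_\sM]$. For every admissible $z$ the value $\Omega(z)$, as well as $\Omega\left(\cfrac{z+a}{1+az}\right)$ and the whole right-hand side of \eqref{fixin}, is a rational function (with scalar coefficients depending on $z$ and $a$) of the single selfadjoint operator $D$; in particular all these operators commute, in agreement with Proposition~\ref{normalop}. By the spectral theorem for $D$ it therefore suffices to check \eqref{fixin} scalarly, i.e. to verify that, after replacing $D$ by an arbitrary $t\in[-1,1]$, the resulting identity of scalar Möbius transforms holds. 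This scalar identity is elementary, expressing nothing more than the group law for Möbius maps, and it can alternatively be read off from the resolvent formulas \eqref{resosa} applied with $S=-D$. This settles Part~1.

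For Part~2, suppose $\Omega\in\cRS(\sM)$ satisfies \eqref{fixin} for some $a\in(-1,1)$, $a\ne 0$. First I would specialize \eqref{fixin} to $z=0$: since $\cfrac{z+a}{1+az}$ equals $a$ at $z=0$, this yields exactly the relation \eqref{omoa}. Next I would let $z\to 1^-$ and $z\to -1^+$ along the real axis. Because the Möbius map fixes the endpoints $\pm 1$, and because the strong limit values $\Omega(\pm 1)$ exist as selfadjoint contractions by \eqref{limits} while $I_\sM-a\Omega(\pm 1)$ stays boundedly invertible (as $|a|<1$), passing to the limit in \eqref{fixin} gives $\Omega(\pm 1)=(\Omega(\pm 1)-aI_\sM)(I_\sM-a\Omega(\pm 1))^{-1}$. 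Multiplying on the right by $I_\sM-a\Omega(\pm 1)$ and cancelling collapses this to $a\,\Omega(\pm 1)^2=a\,I_\sM$, whence $\Omega(\pm 1)^2=I_\sM$ since $a\ne 0$; this is precisely \eqref{novinner}. With both \eqref{novinner} and \eqref{omoa} now in hand for the given $a$, Theorem~\ref{thinne} immediately yields that $\Omega$ is bi-inner, hence inner, which is the assertion of Part~2.

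The two Möbius computations are routine; the one step deserving care is the boundary passage $z\to\pm1$ in Part~2, which I expect to be the main obstacle. There one must justify the strong convergence of the right-hand side of \eqref{fixin}: this uses that $\Omega\left(\cfrac{z+a}{1+az}\right)\to\Omega(\pm1)$ strongly, that the factors $\Omega\left(\cfrac{z+a}{1+az}\right)-aI_\sM$ remain uniformly bounded in norm, and that the inverses $\left(I_\sM-a\Omega\left(\cfrac{z+a}{1+az}\right)\right)^{-1}$ converge strongly to $(I_\sM-a\Omega(\pm1))^{-1}$ (which holds because the operators $a\Omega\left(\cfrac{z+a}{1+az}\right)$ are uniformly contractive). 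These three facts combine to give strong convergence of the product, and the limiting equation then follows. Everything else reduces to the earlier results \eqref{forminner}, \eqref{resosa}, \eqref{omoa}, \eqref{novinner}, and Theorem~\ref{thinne}.
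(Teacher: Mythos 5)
Your proposal is correct and follows essentially the same route as the paper's proof: Part 1 is the direct verification of \eqref{fixin} from the explicit inner form \eqref{forminner} (your spectral-theorem reduction to the scalar M\"obius identity is just a clean way of organizing that ``straightforward calculation''), and Part 2 extracts \eqref{omoa} by setting $z=0$ and \eqref{novinner} by letting $z\to\pm 1$, then invokes Theorem \ref{thinne}. The only cosmetic difference is that the paper first multiplies \eqref{fixin} through by $I_\sM-a\,\Omega\left(\cfrac{z+a}{1+az}\right)$ before passing to the limit $z\to\pm 1$, which sidesteps the strong-convergence-of-inverses argument you supply; both justifications are valid.
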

\begin{proof}
1) If $\Omega\in\cRS(\sM)$ is an inner function, then it takes the
form \eqref{forminner} and $D=\Omega(0)$. The equality \eqref{fixin}
can be verified with a straightforward calculation.

2) Suppose that \eqref{fixin} holds for some $a\in(-1,1)$. Then the
equality
\[
\Omega\left(\cfrac{z+a}{1+az}\right)-a\,I= \Omega(z)\left(I-a\,\Omega\left(\cfrac{z+a}{1+az}\right)\,\right)
\]
holds for all $ z\in\dC\setminus\{(-\infty,-1]\cup [1,+\infty)\}$.
Letting $z\to\pm 1$, we get the equalities
$\Omega(1)^2=\Omega(-1)^2=I_\sM.$ Moreover, with $z=0$ we get from
\eqref{fixin} the equality
\[
(\Omega(a)-aI_\sM)(I_\sM-a\Omega(a))^{-1}=\Omega(0).
\]
Then by applying Theorem \ref{thinne} one finally concludes that
$\Omega$ is an inner function.
\end{proof}

\subsection{The functional equation $\Omega(z)=\left(\Omega\left(\cfrac{z-a}{1-az}\right)-a\,I_\sM\right)\left(I_\sM-a\,\Omega\left(\cfrac{z-a}{1-az}\right)\,\right)^{-1}$}

\begin{theorem}\label{cjdcyjd}
Let $a\in(-1,1)$, $a\ne 0$. Then the equality
\begin{equation}\label{fixinnew}
 \Omega(z)=\left(\Omega\left(\cfrac{z-a}{1-az}\right)-a\,I_\sM\right)\left(I_\sM-a\,\Omega\left(\cfrac{z-a}{1-az}\right)\,\right)^{-1}
\end{equation}
holds for all $z\in\dC\setminus\{(-\infty,-1]\cup
[1,+\infty)\}$ and for some $\Omega\in\cRS(\sM)$ if and only if $\Omega$ is identically equal to a fundamental symmetry in $\sM$.
\end{theorem}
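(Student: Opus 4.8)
The plan is to treat the two implications separately, disposing of the easy one first. If $\Omega(z)\equiv A$ is constant with $A=A^*=A^{-1}$, then from $A^{-1}=A$ one has $A-aI_\sM=A(I_\sM-aA)$, whence $(A-aI_\sM)(I_\sM-aA)^{-1}=A$; substituting $\Omega(w_{-a}(z))=A$ into the right-hand side of \eqref{fixinnew} thus returns $A=\Omega(z)$, so every fundamental symmetry solves \eqref{fixinnew}. The real work lies in the converse: a solution $\Omega\in\cRS(\sM)$ of \eqref{fixinnew} must be a constant fundamental symmetry.

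First I would recast \eqref{fixinnew} into a form adapted to the order structure of $\cRS(\sM)$. Since $w_a$ is an automorphism of the domain and $w_{-a}(w_a(z))=z$, replacing $z$ by $w_a(z)=\frac{z+a}{1+az}$ in \eqref{fixinnew} yields the equivalent relation
\[
\Omega(w_a(z))=(\Omega(z)-aI_\sM)(I_\sM-a\Omega(z))^{-1}=W_{-a}(\Omega(z)),\qquad z\in\dC\setminus\{(-\infty,-1]\cup[1,+\infty)\}.
\]
The advantage of this form is that both sides can now be compared directly with $\Omega(z)$ itself on the real interval $(-1,1)$, where $\Omega$ is selfadjoint and monotone.

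Next comes the heart of the argument, a monotonicity squeeze on $(-1,1)$. Fix $x\in(-1,1)$ and write $S=\Omega(x)$, a selfadjoint contraction. On the one hand, $w_a(x)-x=\frac{a(1-x^2)}{1+ax}$ has the sign of $a$, and since every function in $\cRS(\sM)$ is non-decreasing on $(-1,1)$, this gives the operator inequality $\sgn(a)\,(\Omega(w_a(x))-S)\ge 0$. On the other hand, applying the functional calculus to $S$ with $f(s)=\frac{s-a}{1-as}$ (legitimate since $\sigma(S)\subseteq[-1,1]$ while $|a^{-1}|>1$) and using $f(s)-s=\frac{-a(1-s^2)}{1-as}$ gives $\sgn(a)\,(W_{-a}(S)-S)\le 0$. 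Because the recast relation identifies the two outer operators $\Omega(w_a(x))$ and $W_{-a}(S)$, these inequalities collapse to equalities, producing simultaneously
\[
\Omega(w_a(x))=\Omega(x)\quad\text{and}\quad W_{-a}(\Omega(x))=\Omega(x),\qquad x\in(-1,1).
\]
From the second equality, $\Omega(x)-aI_\sM=\Omega(x)(I_\sM-a\Omega(x))$, i.e. $a\Omega(x)^2=aI_\sM$, so $\Omega(x)^2=I_\sM$ on $(-1,1)$ because $a\ne 0$.

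Finally I would globalize the first equality and invoke rigidity. The relation $\Omega\circ w_a=\Omega$ holds on $(-1,1)$, a set with accumulation points in the connected domain $\dC\setminus\{(-\infty,-1]\cup[1,+\infty)\}$; applying the identity theorem to each scalar function $z\mapsto(\Omega(w_a(z))f,g)$ extends it to the whole domain. Proposition \ref{nnnee} then forces $\Omega$ to be constant, $\Omega\equiv A$, and evaluating $\Omega(x)^2=I_\sM$ at $x=0$ together with $A=A^*$ shows that $A$ is a fundamental symmetry. The main obstacle is precisely the squeeze step: it is here that the Nevanlinna (monotonicity) and Schur (contractivity, via the functional-calculus estimate) aspects of $\cRS(\sM)$ are played against each other to extract the two rigid equalities; once both are in hand, the reduction to Proposition \ref{nnnee} is routine.
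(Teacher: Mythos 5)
Your proof is correct, but it takes a genuinely different route from the paper's. The paper argues algebraically: it rewrites \eqref{fixinnew} as $(\Omega(z)-aI_\sM)(I_\sM-a\Omega(z))^{-1}=\Omega\left(\frac{z+a}{1+za}\right)$, evaluates at $z=0$, and substitutes the M\"obius representation $\Omega(z)=\Omega(0)+D_{\Omega(0)}\Lambda(z)(I+\Omega(0)\Lambda(z))^{-1}D_{\Omega(0)}$ with $\Lambda(z)=z\Gamma(z)$ from Proposition \ref{schaffen}; a chain of identities yields $\Gamma(a)=-I$, Schur-class rigidity (the value at an interior point is unitary) forces $\Gamma\equiv-I$, hence $\Omega(z)=(\Omega(0)-zI)(I-z\Omega(0))^{-1}$, which belongs to $\cRS(\sM)$ only if constant, so $\sD_{\Omega(0)}=\{0\}$ and $\Omega\equiv\Omega(0)$ is a fundamental symmetry. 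You instead play the two structures of $\cRS(\sM)$ against each other pointwise on $(-1,1)$: monotonicity gives $\sgn (a)\,(\Omega(w_a(x))-\Omega(x))\ge 0$, the functional calculus applied to the selfadjoint contraction $\Omega(x)$ gives $\sgn (a)\,(W_{-a}(\Omega(x))-\Omega(x))\le 0$, and since the recast equation identifies $\Omega(w_a(x))$ with $W_{-a}(\Omega(x))$, both inequalities collapse to equalities; the second equality yields $\Omega(x)^2=I_\sM$, while the first, globalized by the identity theorem, lets you invoke Proposition \ref{nnnee} to force constancy. Each proof leans on one realization-based result of the paper (Proposition \ref{schaffen} there, Proposition \ref{nnnee} here); the paper needs the functional equation essentially only at $z=0$, whereas you use it on all of $(-1,1)$, but in exchange you avoid the Schur-algorithm algebra and make the mechanism transparent: the equation forces both $\Omega\circ w_a$ and $W_{-a}\circ\Omega$ to fix $\Omega$ separately, and these two fixed-point conditions are individually rigid. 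Your treatment of the easy direction and of the invertibility of $I_\sM-a\Omega(z)$ (guaranteed by Lemma \ref{hilfe} with $K=aI$, or simply read as part of the hypothesis) is also sound.
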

\begin{proof}
We will use the M\"{o}bius representation \eqref{mobinn} for $\Omega\in\cRS(\sM)$,
\begin{equation}\label{mobinn3}
\Omega(z)=\Omega(0)+D_{\Omega(0)}\Lambda(z)\left(I+\Omega(0)\Lambda(z)\right)^{-1}D_{\Omega(0)},\quad  z\in\dC\setminus\{(-\infty,-1]\cup[1,+\infty)\},
\end{equation}
with a function $\Lambda\in\cRS(\sD_{\Omega(0)})$ such that $\Lambda(z)=z\Gamma(z)$, where $\Gamma$ is a holomorphic $\bB(\sD_{\Omega(0)})$-valued function 
with $\|\Gamma(z)\|\le 1$ for $ z\in\dD$; see Proposition \ref{schaffen}.

Equality \eqref{fixinnew}
is equivalent to the equality
\[
\left(\Omega(z)-a I_\sM\right)\left(I_\sM-a\Omega(z)\right)^{-1}=\Omega\left(\cfrac{z+a}{1+z a}\right)\;\forall z\in\dC\setminus\{(-\infty,-1]\cup[1,+\infty)\}.
\]
Now, with $z=0$ this gives the equality
\[
\left(\Omega(0)-a I_\sM\right)\left(I_\sM-a\Omega(0)\right)^{-1}=\Omega(a)\Longleftrightarrow \Omega(0)-\Omega(a)=a(I_\sM-\Omega(a)\Omega(0)).
\]
Denote $\Omega(0)=D$. Assume that $\sD_D\ne \{0\}$ and represent $\Omega\in\cRS(\sM)$ in the form \eqref{mobinn3}.
Furthermore, we use that $\Lambda(z)=z\Gamma(z)$. This leads to
\[
-aD_D(\Gamma(a)(I+aD\Gamma(a))^{-1}D_D=a\left(I_\sM-\left (D+aD_D(\Gamma(a)(I+aD\Gamma(a))^{-1}D_D\right) D\right).
\]
It follows that
\[
\begin{array}{l}
  -\Gamma(a)(I+aD\Gamma(a))^{-1} =I-a\Gamma(a)(I+aD\Gamma(a))^{-1}D \\
 \quad \Longleftrightarrow\quad (I+a\Gamma(a)D)^{-1}\Gamma(a)=a\Gamma(a)D(I+a\Gamma(a)D)^{-1}-I \\
 \quad \Longleftrightarrow\quad (I+a\Gamma(a)D)^{-1}\Gamma(a)=a\Gamma(a)D(I+a\Gamma(a)D)^{-1}-I\\
 \quad \Longleftrightarrow\quad (I+a\Gamma(a)D)^{-1}\Gamma(a)=-(I+a\Gamma(a)D)^{-1}\\
 \quad \Longleftrightarrow\quad \Gamma(a)=-I.
\end{array}
\]
Since $\Gamma(z)$ belongs to the Schur class in $\sM$, we get
$$\Gamma(z)=-I,\quad  z\in\dC\setminus\{(-\infty,-1]\cup[1,+\infty)\}.$$
Hence for all $z\in\dC\setminus\{(-\infty,-1]\cup[1,+\infty)\}$,
\[
 \Omega(z)=D-zD_D(I-z D)^{-1}D_D=(D-z I)(I-zD)^{-1}.
\]
However, the function $(D-z I)(I-zD)^{-1}$ belongs to the class $\cRS(\sM)$ if and only if it is a constant function. In other words,
one must have $\sD_D=\{0\}$. This means that $\Omega(z)\equiv D$, in $\dC\setminus\{(-\infty,-1]\cup[1,+\infty)\}$, and here
$D$ is a fundamental symmetry in $\sM$ ($D=D^*=D^{-1}$).

\end{proof}
\begin{appendices}

\section{The Schur-Frobenius formula for the resolvent} \label{AppendixA}
Let
\[
\cU=\begin{bmatrix} D&C \cr B&A\end{bmatrix} :
\begin{array}{l} \sM \\\oplus\\ \sH \end{array} \to
\begin{array}{l} \sM \\\oplus\\ \sH \end{array}
\]
be a bounded block operator. Then the resolvent
$R_\cU(\lambda)=(\cU-\lambda I)^{-1}$ of $\cU$ (the Schur-Frobenius
formula) takes the following block form:
\begin{equation}
\label{Sh-Fr1}
\begin{array}{l}
R_\cU(\lambda)=
\begin{bmatrix}-V^{-1}(\lambda)&V^{-1}(\lambda)CR_A(\lambda)\cr
R_A(\lambda)BV^{-1}(\lambda)&R_A(\lambda)\left(I_\cH-BV^{-1}(\lambda)CR_A(\lambda)\right)
\end{bmatrix},
\quad \lambda\in\rho(\cU)\cap\rho(A),
\end{array}
\end{equation}
where
\begin{equation}
\label{VT}V(\lambda):=\lambda I_\sM-D+CR_A(\lambda)B,\;
\lambda\in\rho(A).
\end{equation}
In particular, $\lambda\in\rho(\cU)\cap\rho(A)\iff
V^{-1}(\lambda)\in\bL(\sM)$ and \eqref{Sh-Fr1} and
\eqref{VT} imply
\[
\left(P_\sM R_U(\lambda)\uphar\sM\right)^{-1}
=D-CR_A(\lambda)B-\lambda I_\sM.
\]

\section{Contractive $2\times 2$ block operators}\label{AppendixB} The following well-known result gives the
structure of a contractive block operator.

\begin{proposition} \cite{AG, DaKaWe, ShYa}.
\label{ParContr} The block operator $2\times 2$ matrix
\[
T=\begin{bmatrix} D&C \cr B&F\end{bmatrix} :
\begin{array}{l} \sM \\\oplus\\ \cK \end{array} \to
\begin{array}{l} \sN \\\oplus\\ \cL \end{array}.
\]
is a contraction if and only if $D\in\bB(\sM,\sN)$ is a contraction
and the entries $B$,$C$, and $F$ take the form
\[
\begin{array}{l}
B=ND_D,\quad C=D_{D^*}G,\\
 F=-ND^*G+D_{N^*}LD_{G},
\end{array}
\]
where the operators $N\in\bB(\sD_D,\cL)$, $G\in\bB(\cK,\sD_{D^*})$
and $L\in\bB(\sD_{G},\sD_{N^*})$ are contractions. Moreover, the
operators $N,\,G,$ and $L$ are uniquely determined by $T$. Furthermore, the
following equality holds for all $f \in \sM$, $h \in \cK$:
\[
\begin{split}
\left\|\begin{bmatrix}f\cr h\end{bmatrix}\right\|^2
&-\left\|\begin{bmatrix} D &D_{D^*}G \cr
ND_{D}&-ND^*G+D_{N^*}LD_{G}\end{bmatrix}
\begin{bmatrix}f\cr h\end{bmatrix}  \right\|^2 \\
&=\|D_N(D_{D}f-D^*Gh)-N^*LD_{G}h\|^2+\|D_{L}D_{G}h\|^2.
\end{split}
\]
\end{proposition}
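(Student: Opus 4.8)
The plan is to prove both implications through the single explicit \emph{defect identity} displayed at the end of the statement, derived directly from the commutation relations \eqref{commut}. For a vector $\xi=\begin{bmatrix}f\cr h\end{bmatrix}$ with $f\in\sM$, $h\in\cK$ I abbreviate $u:=D_Df-D^*Gh$. Applying \eqref{commut} to the contraction $D$ gives $DD_D=D_{D^*}D$ and $D^*D_{D^*}=D_DD^*$; the latter shows $D^*Gh\in\sD_D$, so $u\in\sD_D$. When the entries have the asserted form, the lower component of $T\xi$ is $Bf+Fh=N(D_Df-D^*Gh)+D_{N^*}LD_Gh=Nu+D_{N^*}LD_Gh$.

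\textbf{The defect identity and the ``if'' direction.} I would compute $\|f\|^2+\|h\|^2-\|Df+D_{D^*}Gh\|^2-\|Nu+D_{N^*}LD_Gh\|^2$ in two independent pieces. First, using only $DD_D=D_{D^*}D$ and $D^*D_{D^*}=D_DD^*$ together with $\|D_Df\|^2=\|f\|^2-\|Df\|^2$, $\|D_Gh\|^2=\|h\|^2-\|Gh\|^2$ and $\|D^*Gh\|^2=\|Gh\|^2-\|D_{D^*}Gh\|^2$, a direct expansion of $\|u\|^2$ yields the purely $(D,G)$-dependent identity
\[
\|f\|^2+\|h\|^2-\|Df+D_{D^*}Gh\|^2=\|u\|^2+\|D_Gh\|^2 .
\]
Second, expanding the lower term with $\|Nu\|^2=\|u\|^2-\|D_Nu\|^2$, the relation $N^*D_{N^*}=D_NN^*$ from \eqref{commut}, and $\|D_{N^*}LD_Gh\|^2=\|LD_Gh\|^2-\|N^*LD_Gh\|^2$, $\|LD_Gh\|^2=\|D_Gh\|^2-\|D_LD_Gh\|^2$, the cross term collapses into a perfect square, giving
\[
\|Nu+D_{N^*}LD_Gh\|^2=\|u\|^2-\|D_Nu-N^*LD_Gh\|^2+\|D_Gh\|^2-\|D_LD_Gh\|^2 .
\]
Subtracting the two, the terms $\|u\|^2+\|D_Gh\|^2$ cancel and I obtain precisely the displayed identity, whose right-hand side $\|D_N(D_Df-D^*Gh)-N^*LD_Gh\|^2+\|D_LD_Gh\|^2$ is a sum of two squared norms. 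Hence $\|T\xi\|\le\|\xi\|$ for all $\xi$, which proves that $T$ is a contraction and simultaneously records the stated defect formula.

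\textbf{The ``only if'' direction.} Conversely, assume $T$ is a contraction. Restricting $T$ to $\sM$ shows $\|Df\|^2+\|Bf\|^2\le\|f\|^2$, so $D$ is a contraction and $B^*B\le D_D^2$; by the Douglas factorization lemma there is a unique contraction $N\in\bB(\sD_D,\cL)$ with $\ran N^*\subseteq\sD_D$ and $B=ND_D$. Applying the same argument to the contraction $T^*$, whose restriction to $\sN$ is $\begin{bmatrix}D^*\cr C^*\end{bmatrix}$, yields $C=D_{D^*}G$ with a unique contraction $G\in\bB(\cK,\sD_{D^*})$. It remains to pin down $F$; set $R:=F+ND^*G$. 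Repeating the first computation \emph{without} assuming any form for $F$ (i.e. with $Nu+Rh$ in place of $Nu+D_{N^*}LD_Gh$) gives the universal identity
\[
\|f\|^2+\|h\|^2-\|T\xi\|^2=\|D_Nu\|^2-2\,\RE(u,N^*Rh)+\|D_Gh\|^2-\|Rh\|^2 ,
\]
whose right-hand side depends on $f$ only through $u$, which ranges over a dense subset of $\sD_D$ as $f$ varies with $h$ fixed. By continuity the inequality persists for every $u\in\sD_D$; choosing $u=0$ gives $\|Rh\|\le\|D_Gh\|$, so Douglas produces a contraction $L_0\in\bB(\sD_G,\cL)$ with $R=L_0D_G$. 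Letting $u$ range over $\ker D_N$ and rescaling forces $N^*(\ran L_0)\perp\ker D_N$; combined with $D_NN^*=N^*D_{N^*}$ and $NN^*y=y$ for $y\in\ker D_{N^*}$ this gives $\ran L_0\subseteq\cran D_{N^*}=\sD_{N^*}$, so $L_0=D_{N^*}L$ and $F=-ND^*G+D_{N^*}LD_G$. Minimizing the universal identity over $u$ (the first term being minimized to $0$ since $N^*LD_Gh\in\cran D_N$) then yields $((I-L^*L)D_Gh,D_Gh)\ge0$ for all $h$, i.e. $L$ is a contraction; uniqueness of $N,G,L$ follows from the density of $\ran D_D$, $\ran D_{D^*}$, $\ran D_G$ together with $\ran N^*\subseteq\sD_D$ and $\ran L\subseteq\sD_{N^*}$.

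\textbf{Main obstacle.} I expect the genuine difficulty to lie entirely in the corner: passing from the one-sided bound $\|Rh\|\le\|D_Gh\|$ to the \emph{two-sided, bounded} factorization $R=D_{N^*}LD_G$ with a contractive $L$. The subtlety is that $\ran L_0\subseteq\cran D_{N^*}$ only places $L_0$ in the closure of $\ran D_{N^*}$, so the existence of a \emph{bounded} solution $L$ of $D_{N^*}L=L_0$, and the bound $\|L\|\le1$, must be extracted from the defect inequality itself rather than from a naive range inclusion. This is precisely the point developed in \cite{AG,DaKaWe,ShYa}, and it is where I would concentrate the care of the argument; the two short algebraic computations of the ``if'' part, by contrast, are routine once the commutation relations \eqref{commut} are used systematically.
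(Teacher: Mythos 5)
The paper offers no proof of Proposition \ref{ParContr}: it is imported verbatim from \cite{AG, DaKaWe, ShYa}, so your argument has to stand on its own. Its first half does: both of your expansions are correct consequences of \eqref{commut}, their difference is exactly the stated defect identity, and the ``universal identity'' with $R:=F+ND^*G$ is likewise correct, as are the Douglas factorizations $B=ND_D$ and $C=D_{D^*}G$. The genuine gap is the one you flag yourself, but it must be called an error in the body of the argument, not a side remark: the words ``so $L_0=D_{N^*}L$'' are a non sequitur. From $\|Rh\|\le\|D_Gh\|$ you correctly get $R=L_0D_G$ with $\|L_0\|\le 1$, and your orthogonality argument does yield $\ran L_0\subseteq\cran D_{N^*}$; but an inclusion into the \emph{closure} of $\ran D_{N^*}$ produces no bounded solution $L$ of $D_{N^*}L=L_0$ --- when $\ran D_{N^*}$ is not closed, a rank-one $L_0$ whose range meets $\cran D_{N^*}\setminus\ran D_{N^*}$ admits no such factorization at all. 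Since both the contractivity of $L$ and the final identity presuppose this $L$, the converse direction is incomplete at exactly its crux.

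The gap is closable with a tool you already wrote down, namely your universal identity, so the deferral to the literature is unnecessary. Fix $h$ and set $c:=\|D_Gh\|^2-\|Rh\|^2\ge 0$. Your inequality says $2\RE(u,N^*Rh)\le\|D_Nu\|^2+c$ for all $u\in\sD_D$; replacing $u$ by $e^{i\theta}tu$ and optimizing over $t>0$ gives $|(u,N^*Rh)|\le c^{1/2}\|D_Nu\|$. Hence $D_Nu\mapsto (u,N^*Rh)$ is a well-defined bounded functional on $\ran D_N$, and Riesz representation on $\cran D_N$ produces a unique $w(h)\in\cran D_N$ with
\[
D_Nw(h)=N^*Rh,\qquad \|w(h)\|^2\le\|D_Gh\|^2-\|Rh\|^2;
\]
that is, the defect inequality forces $N^*Rh$ into the \emph{actual range} $\ran D_N$, with a quantitative bound --- precisely the content your range-closure argument could not deliver. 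Uniqueness makes $h\mapsto w(h)$ linear; now define $LD_Gh:=Nw(h)+D_{N^*}Rh$. Using $ND_N=D_{N^*}N$ and $NN^*+D_{N^*}^2=I_\cL$ one checks $D_{N^*}LD_Gh=Rh$ and $\|LD_Gh\|^2=\|w(h)\|^2+\|Rh\|^2\le\|D_Gh\|^2$, while $Nw(h)\in N\,\cran D_N\subseteq\cran D_{N^*}$ and $D_{N^*}Rh\in\ran D_{N^*}$ give $\ran L\subseteq\sD_{N^*}$. So $L$ is well defined and contractive on the dense set $\ran D_G$ and extends to a contraction in $\bB(\sD_G,\sD_{N^*})$ with $F=-ND^*G+D_{N^*}LD_G$; your uniqueness argument then runs verbatim (your minimization step becomes redundant, since contractivity of $L$ is now immediate), and the proposal becomes a complete proof in the spirit of \cite{AG, DaKaWe, ShYa} and of the identity-based treatment in \cite{AHS2007}.
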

\begin{remark}
\label{stut1} If $\sN=\sM$, $\cL=\cK$ , then $T\in\bB(\sM\oplus\cK)$
is a selfadjoint contraction if and only if $D=D^*$, $B=C^*$,
$G=N^*$, $L=L^*$.
\end{remark}

\begin{remark}\label{herbst}
Let $F$ be a selfadjoint contraction in the Hilbert space $\cK$,
then the operator given by the block operator
\[
\dJ_F=\begin{bmatrix}-F&D_F\cr D_F&F  \end{bmatrix}:\begin{array}{l}\sD_F\\\oplus\\\cK \end{array}\to \begin{array}{l}\sD_F\\\oplus\\\cK \end{array}
\]
is selfadjoint and unitary: $\dJ_F=\dJ_F=\dJ^{-1}_F$.

Let $\sM$ be a Hilbert space, let $K\in\bB(\sD_F,\sM)$ be a contraction and let
$$\begin{bmatrix}K&0\cr 0& I\end{bmatrix}:\begin{array}{l}\sD_F\\\oplus\\\cK\end{array}\to \begin{array}{l}\sM\\\oplus\\\cK\end{array}.$$
 Then for any selfadjoint contraction $Y\in\bB(\sD_{K^*})$ the block operator
\[
\begin{array}{ll}
 T&=\; \begin{bmatrix}K&0\cr 0&I\end{bmatrix}\begin{bmatrix}-F&D_F\cr D_F&F  \end{bmatrix}
 \begin{bmatrix}K^*&0\cr 0&I\end{bmatrix}+\begin{bmatrix}D_{K^*}YD_{K^*}&0\cr
 0&0\end{bmatrix}\\[5mm]
 &=\; \begin{bmatrix}-KFK^*+D_{K^*}YD_{K^*}&KD_F\cr D_FK^*&F\end{bmatrix}:\begin{array}{l}\sM\\\oplus\\\cK \end{array}\to
 \begin{array}{l}\sM\\\oplus\\\cK \end{array}
\end{array}
\]
is selfadjoint contraction. Conversely, any selfadjoint contraction
$$T= \begin{bmatrix}D&C\cr C^*&F\end{bmatrix}:\begin{array}{l}\sM\\\oplus\\\cK \end{array}\to \begin{array}{l}\sM\\\oplus\\\cK \end{array}$$
has the representation
\[
T=\begin{bmatrix}K&0\cr 0&I\end{bmatrix}\dJ_F\begin{bmatrix}K^*&0\cr 0&I\end{bmatrix}+
\begin{bmatrix}D_{K^*}YD_{K^*}&0\cr 0&0\end{bmatrix}
\]
with some contraction $K\in\bB(\sD_F,\sM)$ and some selfadjoint contraction $Y\in\bB(\sD_{K^*})$.
Moreover, $T$ is unitary if and only if $K$ is an isometry and $Y=Y^*=Y^{-1}$ in the subspace $\sD_{K^*}=\ker K^*$.
\end{remark}

\end{appendices}


\end{document}